\title{
 Steepest Descent Algorithm \\
for M-convex Function Minimization\\
Using Long Step Length
}
\author{
Taihei Oki%
\footnote{
Institute for Chemical Reaction Design and Discovery (ICReDD),
Hokkaido University,
Sapporo 001-0021, Japan.
\texttt{oki@icredd.hokudai.ac.jp}
}
\and
Akiyoshi Shioura%
\footnote{
Department of Industrial Engineering and Economics,
 Institute of Science Tokyo,
Tokyo 152-8550, Japan.
  \texttt{shioura.a.aa@m.titech.ac.jp}}
}
\newcommand{\suppp}{{\rm supp}\sp{+}}
\newcommand{\suppm}{{\rm supp}\sp{-}}
\def\0{\mbox{\bf 0}}
\def\1{\mbox{\bf 1}}
\def\phi{\varphi}
\def\epsilon{\varepsilon}
\def\R{{\mathbb{R}}}
\def\Z{{\mathbb{Z}}}
\def\Oh{{\rm O}}
\def\Mnat{{M$^\natural$}}
\def\Rinf{\overline{\R}}
\def\Rpminf{\R \cup \{\pm \infty\}}
\def\P*{{\rm P}_*}
\newcommand{\dom}{{\rm dom\,}}
\newcommand{\domR}{{\rm dom_\R\,}}
\newtheorem{theorem}{Theorem}[section]
\newtheorem{lemma}[theorem]{Lemma}
\newtheorem{coro}[theorem]{Corollary}
\newtheorem{prop}[theorem]{Proposition}
\newtheorem{example}[theorem]{Example}
\newtheorem{remark}[theorem]{Remark}
\begin{document}

\maketitle

\begin{abstract}
 We consider the minimization of an M-convex function, which is 
a discrete convexity concept for functions on the integer lattice points.
 It is known that a minimizer of an M-convex function can be obtained 
by the steepest descent algorithm.
 In this paper, we propose an effective use of long step length in the steepest
descent algorithm, aiming at the reduction in the running time.
 In particular, we obtain an improved time bound by using long step length.
 We also consider the constrained M-convex function minimization and 
show that long step length can be applied to a variant of
steepest descent algorithm as well. 

\end{abstract}

\section{Introduction}

Steepest descent algorithm is one of the most fundamental algorithms for convex
function minimization in real variables, which repeatedly moves a point
by a certain step length in a steepest descent direction. 
 While choosing an appropriate step length is crucial to the efficiency of the 
algorithm, this presents a fundamental dilemma: longer step lengths can speed up the
 optimization process, but they also increase the risk of overshooting an optimal
 solution. 
Balancing these two, a number of methods to determine step lengths with provable
convergence rates have been proposed \cite{BSS2006,BV2004}.

Choosing an appropriate step length is also crucial 
in discrete optimization with integer variables.
 While  unit step length is often used in algorithms for discrete optimization
problems, due to the integrality of solutions,
long step length is also used by various algorithms, including those
for network flow problems.
 We illustrate this through the successive shortest path algorithm for the
minimum cost flow problem with integral capacities 
(see, e.g., \cite[Section 9.7]{AMO1993}).

 In the successive shortest path algorithm, 
a shortest path
from the source to the sink in the residual network
is repeatedly selected, 
and a flow is augmented along the path by a certain amount;
this amount can be viewed as step length.
 By augmenting a flow by a unit amount in each iteration,
we obtain a pseudo-polynomial time bound that is proportional to
the total supply value at the source.
 Instead, we may augment a flow by the capacity of the shortest path;
while this modification does not improve the worst-case running time, 
it makes the algorithm faster in practice.
 Moreover, we can obtain a better time bound in terms of arc cost
if we augment a flow by using multiple shortest paths 
at the same time%
\footnote{
This variant of the successive shortest path algorithm
is often referred to as a primal-dual
algorithm \cite[Section 9.8]{AMO1993}.
}.
In this way, we can improve the running time of
the algorithm theoretically and practically with the aid of long step length.

\paragraph{Our Contribution}

 In this paper, 
we extend the idea of long step length to a broad class
of discrete optimization problems called 
\textit{M-convex function minimization},
and propose an effective use of long step length in 
the steepest descent algorithm.
 The successive shortest path algorithm for the minimum cost flow problem
can be seen as a special case of the steepest descent algorithm
for M-convex function minimization (cf.~Example~\ref{ex:mcf}).
 By using long step lengths,
we obtain an improved time bound
of the steepest descent algorithm,
showing that if an initial point is sufficiently
``close'' to optimality, then an optimal point 
can be computed quickly.

 Minimization of an M-convex function is one of the most fundamental problem in 
discrete convex analysis \cite{Murota96,Murota98,Murota03book}
and includes various discrete optimization problems as special cases.
 Indeed, examples of M-convex function minimization are
the minimum weight base problem of matroids and polymatroids 
(see, e.g., \cite{Fujishige05,Schrijver03}),
separable-convex resource allocation problem under a polymatroid constraint \cite{Hochbaum94,KSI13},
the minimum cost flow problem with multiple sources and demands~\cite{AMO1993},
maximization of gross-substitute valuation \cite{PaesLeme17}, and so on
(see also Examples 2.1--2.3 in Section \ref{sec:prelim}).

  M-convex function minimization can be solved in pseudo-polynomial time
by the steepest descent algorithm
\cite{Sh1998}.
 Polynomial-time solvability of this problem is shown in \cite{Sh1998},
and various polynomial-time algorithms based on scaling approach are proposed 
in \cite{MoriguchiMurotaShioura02,Shioura03,Tamura05}.

 An optimality condition of M-convex function minimization
is described by using directions of the form%
\footnote{
$\chi_i \in \Z^n$ denotes
the $i$-th unit vector for $i=1,2,\ldots, n$. 
}
 $+ \chi_i - \chi_j$:
a vector $x$ is a minimizer of an M-convex function $f$
if and only if the slopes of $f$ in all such directions 
are non-negative \cite{Murota96,Murota98}.
 Hence, an optimal point can be obtained by
iteratively moving a point along a steepest descent direction 
(i.e., a direction with the minimum slope)
until the slope in the steepest descent direction becomes non-negative;
this is the \emph{steepest descent algorithm} for M-convex function minimization
\cite{Murota03}.
 It is shown \cite{MinamikawaShioura21,Shioura2022}
that the number of iterations in the steepest descent algorithm
is exactly equal to $(1/2)\|x^* - x_0\|_1$,
where $x_0$ is the initial point and
$x^*$ is an optimal point nearest to $x_0$ in terms of the $\ell_1$-distance.

 In the basic steepest descent algorithm,
the unit length is always chosen as the step length.
 We propose a modifined version of the steepest descent algorithm,
in which  the step length can be taken larger as far as the slope in the direction 
remains the same.
 We first show that the trajectory of the vector generated by the
modified algorithm coincides with that of 
a special implementation of the basic steepest descent algorithm.
 This modification possibly reduces the running time of the algorithm
in practice, although the theoretical worst-case time bound is the same as before.

  To obtain an improved theoretical time bound,
in Section \ref{sec:Mconv} we further
 modify the algorithm by 
selecting steepest descent directions in some specific order,
and show that, under the assumption that $f$ is integer-valued,
the number of iterations is at most
\[
 \min\{n^2 |\mu(x_0)|, (1/2)\|x^* - x_0\|_1 \},
\]
where $\mu(x_0)$ is the slope in the steepest descent direction
at the initial point.
 This bound shows that the steepest descent algorithm
using long step lengths terminates quickly if 
the slope $\mu(x_0)$ in the steepest descent direction at $x_0$
is sufficiently close to zero, or
the initial point $x_0$ is close to the set of optimal points
(or both).

We also discuss in Section \ref{sec:constM} application of long step length 
to a class of constrained M-convex function minimization problems.
 In this problem, we are given an M-convex function $f$,
an index set $R\subseteq \{1,2,\ldots, n\}$, and an integer $k$, 
and find a minimizer of $f$ under the constraint $\sum_{i \in R} x(i)=k$.
 This constrained problem is a generalization of the one for matroids
discussed by  Gabow and Tarjan \cite{GT84} and Gusfield \cite{Gusfield84}
and for polymatroids by Gottschalk et al.~\cite{GLPW18}.
 In addition, this constrained problem is used
in \cite{Shioura2022} to reformulate
a nonlinear integer programming problem arising from 
re-allocation of dock-capacity in a bike sharing system discussed by
Freund et al.~\cite{FHS2022}.
 A more general (but essentially equivalent) constrained problem 
is considered by Takazawa \cite{Takazawa2023}
(see Section \ref{sec:connect-const-Mnat}).
   The constrained M-convex function minimization can be also solved by
a variant of the steepest descent algorithm \cite{Takazawa2023}, in which the value
$\sum_{i \in R} x(i)$ is iteratively increased until it reaches $k$.
 We show that long step length can be also used in this algorithm to 
improve its time bound.

  The concept of M-convexity for functions on $\Z^n$ is extended to
ordinary (polyhedral) convex functions on $\R^n$ \cite{MS99,MLconj04}.
 We consider in Section \ref{sec:polyM} the minimization of
a polyhedral M-convex function.
 It is known (cf.~\cite{MS99}) 
that a steepest descent algorithm 
for M-convex functions on $\Z^n$ is 
applicable to polyhedral M-convex functions on $\R^n$.
 In particular, use of long step length is quite natural for
polyhedral M-convex functions.
 While the steepest descent algorithm finds a minimizer of a polyhedral M-convex function
if it terminates, it is not known so far whether the algorithm terminates
in a finite number of iterations.
 We show that  
in a variant of the steepest descent algorithm with long step length,
the slope in the steepest descent direction increases strictly after $\Oh(n^2)$ iterations.
 By using this property, we can obtain the first result 
on the finite termination of an exact algorithm for finding a global minimizer of a polyhedral M-convex functions.

\paragraph{Related Work}

Steepest descent algorithms
using long step lengths
have been proposed for another class of discrete convex functions
called L-convex functions \cite{Murota98,Murota03book}.
  L-convex functions form the conjugate class of M-convex functions
with respect to the Legendre--Fenchel transformation.
 It is known that a minimizer of an L-convex function can be obtained
by a steepest descent algorithm of a different type \cite{Murota2000,Murota03},
where a set of 0-1 vectors are used as moving directions of a point.
 For this algorithm, the long step technique is applied and similar results 
are obtained  (see \cite{Shioura2017}; see also \cite{FujishigeMurotaShioura2015}).
 In particular, it is shown that 
a variant of long-step steepest descent algorithm
that always selects a ``minimal'' steepest descent direction 
achieves a better theoretical time bound.

\paragraph{Organization of This Paper}

The concept of M-convex function and other related concepts
are explained in Section \ref{sec:prelim}. 
 Long step length is applied to the steepest descent algorithm
for M-convex function minimization in Section \ref{sec:Mconv}.
 Application to
constrained  M-convex function minimization is considered in Section \ref{sec:constM}.
 Results for polyhedral M-convex function minimization are presented in
Section \ref{sec:polyM}. 
 Omitted proofs are given in Appendix.


\section{Preliminaries}
\label{sec:prelim}

 In this section, we explain the definition of M-convex function
and other related concepts.
 
 Throughout this paper, let $n$ be a positive integer
and denote $N =\{1,2,\ldots, n\}$.
 We denote by $\Z$ and $\R$ the sets of integers and real numbers, respectively.
 Also, we denote by $\Z_+$ the set of non-negative integers,
and  $\Rinf=\R\cup\{+ \infty\}$.
  We denote $\0=(0,0,\ldots, 0)$.
 The $i$-th unit vector for $i \in N$ is denoted as $\chi_i \in \{0,1\}^n$,
i.e., $\chi_i(i)=1$ and $\chi_i(j)=0$ if $j\ne i$;
  in addition, we denote $\chi_0 = \0$.
 For a vector $x \in \R^n$, we define
$\|x\|_1 = \sum_{i \in N} |x(i)|$, $\|x\|_\infty = \max_{i \in N} |x(i)|$,
and $x(R) = \sum_{i \in R} x(i)$ for $R \subseteq N$.

 A univariate function $\psi: \Z \to \Rinf$ is said to be \emph{convex}
if it satisfies $\psi(\alpha - 1) + \psi(\alpha + 1) \ge 2 \psi(\alpha)$
for every $\alpha \in \Z$ with $\psi(\alpha) < + \infty$.
  We assume in this paper that the value of 
a function $f: \Z^n \to \Rinf$ (or $f: \R^n \to \Rinf$)
can be evaluated in constant time.

\paragraph{Base polyhedron and polymatroid}

 Let $\rho: 2^N \to \Z\cup\{+\infty\}$ 
be an integer-valued submodular function; $\rho$ is
 submodular if it  
satisfies the submodular inequality:
\[
\rho(X) + \rho(Y) \ge \rho(X \cap Y) + \rho(X \cup Y) \qquad (X, Y \in 2^N).
\]
 In this paper, we assume $\rho(\emptyset) =0$ for every submodular function $\rho$.
 An \textit{(integral) base polyhedron} $B(\rho)$ associated with a submodular function $\rho$ is
the set of integral vectors given by
\[
B(\rho) = \{x\in \Z^n \mid x(Y) \le \rho(Y)\ (Y \in 2^N),\ x(N) = \rho(N)\}.
\]
 An integral base polyhedron is also referred to as an M-convex set \cite{Murota03book}.

 A {polymatroid} is defined by a submodular function 
$\rho: 2^N \to \Z_+$ such that function values are non-negative integers and 
$\rho$ is monotone non-decreasing (i.e., $\rho(X) \le \rho(Y)$ whenever $X \subseteq Y$);
such a function is called a \textit{polymatroid rank function}.
 An \textit{(integral) polymatroid} associated with a polymatroid rank function $\rho$
is the set of non-negative integral vectors given as
\[
P(\rho) = \{x\in \Z_+^n \mid x(Y) \le \rho(Y)\ (Y \in 2^N)\}.
\]
 A vector $x \in P(\rho)$ is called a base of the polymatroid $P(\rho)$
if it satisfies $x(N)= \rho(N)$.

\paragraph{M-convex and \Mnat-convex function}

 For a function $f: \Z^n \to \Rinf$  defined on
integer lattice points, we define the effective domain of $f$
by  $\dom f= \{x \in \Z^n \mid f(x) < + \infty\}$.
 A function $f$ with $\dom f  \ne \emptyset$  is said to be \textit{M-convex} 
if it satisfies the following exchange axiom:
\begin{quote}
{\bf (M-EXC)}
for every
$x, y \in \dom f$ and $i \in \suppp(x-y)$,
there exists some
$j \in \suppm(x-y)$ such that
\[
 f(x)  + f(y) \ge f(x - \chi_i + \chi_j) + f(y + \chi_i - \chi_j),
\]
\end{quote}
where 
\[
\suppp(x-y) = \{i \in N \mid x(i)> y(i)\},
\quad
\suppm(x-y) = \{i \in N \mid x(i) < y(i)\}.
\]

 We say that $f$ is \textit{M-concave} if $-f$ is an M-convex function.
The concept of M-convex function is an extension of valuated matroid due to Dress
and Wenzel \cite{DW90}; 
for a function $f$ with $\dom f \subseteq \{0,1\}^n$,
$f$ is a valuated matroid if and only if it is M-concave.

 The concept of M-convexity is deeply related with integral base polyhedra.
 For an M-convex function $f:\Z^n \to \Rinf$, 
the effective domain $\dom f$ is an integral base polyhedron;
in addition, the set of minimizers $\arg\min f$ 
is an integral base polyhedra if $\arg\min f \ne \emptyset$.
 On the other hand, given an integral base polyhedron $B \subseteq \Z^n$, the
 indicator  function $\delta_B: \Z^n \to \{0, + \infty\}$ defined by 
\[
\delta_B(x) = 
\begin{cases}
0 & (\mbox{if }x \in B),\\
+ \infty &(\mbox{otherwise})
\end{cases}
\]
is an M-convex function.

 An \Mnat-convex function is a variant of M-convex function.
 The exchange axiom (M-EXC) implies that $x(N)=y(N)$ for every $x,y \in \dom f$, i.e.,
the effective domain $\dom f$ is contained in a hyperplane
of the form $x(N)=r$ with some integer $r$.
 This fact naturally leads us to consider the projection of an M-convex function with $n$ variables
to a function with $n-1$ variables; we call such a function an 
\textit{\Mnat-convex} function~\cite{MS99}. 
 More precisely, a function $f: \Z^n \to \Rinf$ is said to be \emph{\Mnat-convex}
if the function $\tilde{f}: \Z^{\tilde{N}} \to \Rinf$ given as
\begin{align}
\label{eqn:tilde_f}
 \tilde{f}(x, x_0) = 
\begin{cases}
f(x)   & (\mbox{if } x(N) + x_0 = 0),\\
+ \infty & (\mbox{otherwise})
\end{cases}
\qquad ((x, x_0) \in \Z^{\tilde{N}} = \Z^n \times \Z)
\end{align}
is an M-convex function, where $\tilde{N} = N \cup\{0\}$.

 An \Mnat-convex function can be characterized by the following exchange axiom:
\begin{quote}
{\bf (\Mnat-EXC)}
for every
$x, y \in \dom f$ and $i \in \suppp(x-y)$,
there exists some
$j \in \suppm(x-y)\cup\{0\}$ such that
\[
 f(x)  + f(y) \ge f(x - \chi_i + \chi_j) + f(y + \chi_i - \chi_j).
\]
\end{quote}
 Note that $j$ can be equal to $0$ in (\Mnat-EXC), which is not possible in (M-EXC).
  Hence, the class of \Mnat-convex functions properly contains that of
M-convex functions, while they are essentially equivalent 
by the definition of \Mnat-convexity.

Gross-substitutes valuation \cite{KC82} in mathematical economics 
and its generalization
called strong-substitutes valuation \cite{Milgrom-Strulovici2009} 
are essentially equivalent to the concept of \Mnat-convexity 
(see Example \ref{ex:subst} for details).
 Note also that the indicator function $\delta_{P}$ of an integral polymatroid $P$
is an \Mnat-convex  function.

\paragraph{Examples}

 We present some examples of M-convex and \Mnat-convex functions.

\begin{example}[Minimum cost flow]\rm
\label{ex:mcf}
 M-convex functions arise from the minimum cost flow problem.
 Let $G = (V, A)$ be a directed graph with 
two disjoint vertex subsets $S,T \subseteq V$,
where $S$ (resp., $T$) represents the set of source (resp., sink) vertices.
 For each arc $a \in A$ we are given 
an arc capacity $u(a) \in \Z_+$.
 A vector $\xi \in \Z^A$ is called a flow, and
the boundary $\partial \xi \in \Z^V$ of a flow $\xi$ 
is given by
\[
\partial \xi(v) = \sum\{\xi(a) \mid
\text{arc $a$ leaves $v$}\}
  - \sum\{\xi(a) \mid \text{arc $a$ enters $v$}\}
\qquad (v \in V).
\]
 A flow $\xi \in \Z^A$ is said to be feasible if it satisfies
\[
0 \le \xi(a) \le u(a) \ (a \in A), \qquad
\partial \xi(v) = 0 \ (v \in V \setminus (S \cup T)).
\]

 Suppose that we are also given a univariate convex functions $f_a: \Z\to \R$
for each $a \in A$,  which represents the cost of flow on arc $a$.
 The minimum cost of a flow that realizes supply/demand values 
$x \in \Z^{S \cup T}$ is represented by
a function $f: \Z^{S \cup T} \rightarrow \Rpminf$ defined as
\[
 f(x) = \inf_{\xi\in \Z^A}
\left\{\left.\sum_{a \in A}f_a(\xi(a)) \ \right| \ 
\text{$\xi$ is a feasible flow  with
$(\partial \xi)(v) = x(v)$ $(v \in S\cup T)$ }
\right\};
\]
we define $f(x) = + \infty$
if there exists no feasible flow $\xi$ 
with $(\partial \xi)(v) = x(v)\ (v \in S\cup T)$.
 It can be shown that $f$ is an M-convex function, provided that
$f(x) > - \infty$ for $x \in \Z^n$ \cite{Murota96,Murota98}.

 An optimal solution of the minimum cost flow problem can be obtained 
by a variant of the successive shortest path algorithm:
starting with the zero flow, we repeatedly select a shortest path
among all paths from sources to sinks in the auxiliary network
and  augment a flow by one unit along the selected path.
 This algorithm can be seen as a specialized implementation
of the steepest descent algorithm for M-convex function minimization 
to the function $f$ mentioned above.
\qed
\end{example}

\begin{example}[Resource allocation]
\label{ex:rap}
\rm
 The {\it (separable convex) resource allocation problem under a
polymatroid constraint} is given as follows  \cite{FG86,Groenevelt91,Hochbaum94}:
\[
\mbox{RAP:} \quad
\mbox{Minimize }  \textstyle \sum_{i =1}^n f_i(x(i))
\quad  \mbox{subject to } 
\displaystyle
x \in P(\rho),\ x(N)= \rho(N),
\]
where $f_i: \Z \to \R$ $(i \in N)$ is a univariate convex function
and $\rho: 2^N \to \Z_+$ is a polymatroid rank function;
 see \cite{Fujishige05,IK88,KSI13} for review of RAP.
 RAP can be regarded as a special case of M-convex function minimization
since  function $f_{\rm RAP1}: \Z^n \to \Rinf$ defined by
\[
f_{\rm RAP1}(x) =  \left\{
\begin{array}{ll}
\sum_{i =1}^n f_i(x(i))
& (\mbox{if } x\in \Z^n\mbox{ is a feasible solution to RAP}),\\
+ \infty & (\mbox{otherwise})
\end{array} 
\right.
\]
satisfies (M-EXC)
\cite{Murota96,Murota98}.
 We can also reformulate RAP as the minimization of 
function $f_{\rm RAP2}: \Z^n \to \Rinf$ defined by 
\[
f_{\rm RAP2}(x) =  \left\{
\begin{array}{ll}
\sum_{i =1}^n f_i(x(i))
& (\mbox{if } x\in P(\rho)),\\
+ \infty & (\mbox{otherwise})
\end{array} 
\right.
\]
under the constraint $x(N) = \rho(N)$.
It can be shown that $f_{\rm RAP2}$ is an \Mnat-convex function
(cf.~\cite{Murota96,Murota98,MS99}).
\qed
\end{example}

\begin{example}[Strong-substitutes valuations]\rm
\label{ex:subst}
 The concepts of gross-substitutes and strong-substitutes valuations
in mathematical economics
are known to be equivalent to \Mnat-concave functions \cite{FY03,ST15}.
 The \emph{gross-substitutes} condition for a single-unit valuation $f: \{0,1\}^n \to \R$,
introduced by Kelso and Crawford \cite{KC82} (see also \cite{GS99,GS00}),
is described using price vectors $p,q \in \R^n$ as follows:
\begin{quote}
 {\bf (GS)}  
$\forall p,q  \in \R^n$ with $p \leq q$,
$\forall x\in \arg\max_y\{f(y)-p^\top y\}$, $\exists x'\in \arg\max_y\{f(y)-q^\top y\}$:\\
 $x'(i)\ge x(i)$ for all $i\in N$ with $q(i)=p(i)$.
\end{quote}
 The gross-substitutes condition is extended to multi-unit valuation functions 
(i.e., functions defined on integer interval $0 \le x(i)\le u(i)$ $(i \in N)$)  
by Milgrom and Strulovici \cite{Milgrom-Strulovici2009}, 
which is called the \emph{strong-substitutes} condition.
 We say that a multi-unit valuation function $f$ satisfies
the strong-substitutes condition if $f$ satisfies (GS) when all units of every item is regarded as different items.
 Equivalence between gross-substitutes (and strong-substitutes) valuations
and \Mnat-concave functions is shown by Fujishige and Yang \cite{FY03} (see also \cite{ST15}).
 It is known that gross-substitutes (and strong-substitutes) valuation functions
enjoy various nice properties in mathematical economics.
 In particular, in the auction market with multiple indivisible items,
gross-substitutes condition for bidders' valuations  implies the existence of 
a Walrasian equilibrium. 

 We here consider the following fundamental problem in economics: given a strong-substitutes valuation $f$ and 
a price vector $p \in \R^n$ for items,
find an item set $x \in \dom f$ maximizing the value $f(x) - p^\top x$.
  This problem can be seen as a maximization of an \Mnat-concave function
since $f$ is an \Mnat-concave function and the class of \Mnat-concave functions is closed 
under the addition of linear functions.
\qed  
\end{example}


\section{Application to M-convex Function Minimization}
\label{sec:Mconv}

In this section, 
minimization of an M-convex function $f:\Z^n \to \Rinf$ is considered.  
 We first review the basic steepest descent algorithm, 
and then propose algorithms using long step length.
 Throughout this section, we assume the boundedness of $\dom f$;
this assumption implies $\arg\min f \ne \emptyset$, in particular.

\subsection{Review of Steepest Descent Algorithm}
\label{sec:M-review}

 The steepest descent algorithm for M-convex function minimization 
is based on the characterization of a minimizer
by local minimality condition.

\begin{theorem}[{\cite{Murota96,Murota98,Murota03book}}]
\label{thm:M-minimizer}
 For an M-convex function $f: \Z^n \to \Rinf$,
a vector $x^* \in \dom f$ 
is a minimizer of $f$ if and only if
$f(x^* + \chi_i - \chi_j) \ge f(x^*)$ for all $i, j \in N$.
\end{theorem}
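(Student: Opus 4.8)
The plan is to dispatch the ``only if'' direction immediately and concentrate on ``if''. If $x^*$ is a global minimizer then $f(x^* + \chi_i - \chi_j) \ge f(x^*)$ for every $i,j \in N$ simply because $f(x^*)$ is the global minimum value; this needs no structure of $f$. For the converse I would argue by contradiction using an extremal counterexample. Suppose $x^*$ satisfies the local condition yet fails to minimize $f$. Then the set of points strictly better than $x^*$ is nonempty, and among all $y \in \dom f$ with $f(y) < f(x^*)$ I would pick one minimizing the $\ell_1$-distance $\|x^* - y\|_1$. Since $f(y) < f(x^*)$ we have $y \ne x^*$, and because $\dom f$ lies on a hyperplane of the form $x(N) = \mbox{const}$ (a consequence of (M-EXC)), the equality $x^*(N) = y(N)$ forces both $\suppp(x^* - y)$ and $\suppm(x^* - y)$ to be nonempty.

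The heart of the proof is to produce from $y$ a strictly closer counterexample $y'$. Fix any $i \in \suppp(x^* - y)$. Applying (M-EXC) to the pair $x^*, y$ and this index $i$ yields some $j \in \suppm(x^* - y)$ with
\[
f(x^*) + f(y) \ge f(x^* - \chi_i + \chi_j) + f(y + \chi_i - \chi_j).
\]
Note that finiteness of the left-hand side forces both terms on the right to be finite, so in particular $x^* - \chi_i + \chi_j \in \dom f$. Now I invoke the hypothesis at $x^*$ in the direction $\chi_j - \chi_i$: reading the local condition $f(x^* + \chi_k - \chi_l) \ge f(x^*)$ with $k = j$ and $l = i$ gives $f(x^* - \chi_i + \chi_j) \ge f(x^*)$. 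Substituting this lower bound into the exchange inequality and cancelling $f(x^*)$ leaves $f(y) \ge f(y + \chi_i - \chi_j)$, so the point $y' := y + \chi_i - \chi_j$ satisfies $f(y') \le f(y) < f(x^*)$ and is therefore again strictly better than $x^*$.

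It remains to compare distances. Because $i \in \suppp(x^* - y)$ means $x^*(i) > y(i)$ and $j \in \suppm(x^* - y)$ means $x^*(j) < y(j)$, incrementing $y(i)$ and decrementing $y(j)$ each reduce the corresponding coordinate discrepancy by exactly one, whence $\|x^* - y'\|_1 = \|x^* - y\|_1 - 2$. This contradicts the minimality of $\|x^* - y\|_1$ in the choice of $y$, completing the argument. The step I expect to require the most care is the bookkeeping in this last paragraph: one must check that $i$ and $j$ genuinely lie in the positive and negative support sets so that the $\ell_1$-distance strictly drops, and that the finiteness built into (M-EXC) legitimately permits applying the local hypothesis at $x^* - \chi_i + \chi_j$. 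Everything else is a direct combination of the exchange axiom with the local minimality assumption.
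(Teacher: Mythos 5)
Your proof is correct and is exactly the standard descent argument from the cited references (take a strictly better point $y$ nearest to $x^*$ in $\ell_1$-distance, apply (M-EXC) to $x^*$, $y$, and $i \in \suppp(x^*-y)$, use the local condition to cancel $f(x^*)$, and contradict minimality of the distance); the paper itself states Theorem~\ref{thm:M-minimizer} without proof, citing \cite{Murota96,Murota98,Murota03book}, where this is the argument given. All the bookkeeping you flag (nonemptiness of the supports via $x^*(N)=y(N)$, and the drop of the distance by exactly $2$) checks out.
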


\noindent
 In the algorithm description,
we use a vector of the form $+ \chi_i -\chi_j$ $(i, j \in N)$,
which is referred to as a \textit{direction} in this section.
 For $x \in \dom f$ and a direction $+ \chi_i -\chi_j$,
we denote 
\[
f'(x;i, j) = f(x + \chi_i -\chi_j) - f(x),
\]
i.e., ${f}'(x;i,j)$ is the slope of function $f$ at $x$
in the direction $+ \chi_i -\chi_j$.
Note that $f'(x;i,i)=0$ by definition.
 For $x \in \dom f$, we say that a direction
$+ \chi_i -\chi_j$ is a \textit{steepest descent direction} of $f$ at $x$
if it minimizes the value $f'(x;i, j)$ among all directions.
 We denote 
\[
 \phi(x) = \min_{i, j \in N}f'(x;i, j),
\]
i.e., $\phi(x)$ is the slope of a steepest descent direction at $x$.
 We have $\phi(x) \le 0$ for every $x \in \dom f$, 
and Theorem~\ref{thm:M-minimizer} implies that 
the equality holds if and only if $x$ is a minimizer of~$f$.

 We present below a basic version of the steepest descent algorithm.
 In the algorithm, the vector $x$ is
repeatedly moved in a steepest descent direction until $\phi(x) = 0$ holds.

\begin{flushleft}
 \textbf{Algorithm} {\sc M-SD} 
\\
 \textbf{Step 0:} 
 Let $x_0 \in \dom f$ be an arbitrarily chosen initial vector.
 Set $x:=x_0$.
\\
 \textbf{Step 1:} 
 Let $i, j \in N$ be elements that minimize $f'(x;i, j)$.
\\
 \textbf{Step 2:} 
If $f'(x;i, j) = 0$ then output $x$ and stop.
\\
 \textbf{Step 3:} 
  Set $x := x + \chi_i - \chi_j$, and go to Step 1.
\end{flushleft}

 It is easy to see that the $\ell_1$-distance from $x$ to (the nearest) minimizer of $f$ 
reduces \textit{at most} two in each iteration, which implies that 
the number of iterations in the algorithm {\sc M-SD} is at least
$(1/2)\min\{\|y - x_0 \|_1 \mid y \in \arg\min f\}$.
 It turns out that this bound is tight \cite{Shioura2022}.
 We denote by $\tau(x_0)$ the $\ell_1$-distance 
between a vector $x_0 \in \dom f$ and the set of minimizers $\arg\min f$, i.e.,
\[
 \tau(x_0) = \min\{\|y - x_0 \|_1 \mid y \in \arg\min f\}.
\]

\begin{theorem}[{\cite[Corollary 4.2]{Shioura2022}}]
\label{thm:M_sd_bound}
 For an  M-convex function $f:\Z^n \to \Rinf$ with $\arg\min f \ne \emptyset$,
the algorithm {\sc M-SD} outputs a minimizer $x^*$ of $f$
satisfying $\|x^* - x_0 \|_1= \tau(x_0)$,
and 
the number of iterations is exactly equal to $(1/2)\tau(x_0)$.
\end{theorem}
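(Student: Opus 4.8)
The plan is to prove the two assertions together by tracking how the $\ell_1$-distance $\tau(x)$ to the minimizer set evolves along the trajectory. Correctness is immediate: the algorithm halts only when $f'(x;i,j)=0$ for the minimizing pair, i.e.\ when $\phi(x)=0$, and by Theorem~\ref{thm:M-minimizer} this happens precisely at a minimizer of $f$. So I only need to count iterations. Since each iteration replaces $x$ by $x+\chi_i-\chi_j$ with $\|(x+\chi_i-\chi_j)-x\|_1=2$ (here $i\ne j$ because $\phi(x)<0=f'(x;i,i)$), the triangle inequality gives $|\tau(x+\chi_i-\chi_j)-\tau(x)|\le 2$. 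The lower bound $\tau(x+\chi_i-\chi_j)\ge \tau(x)-2$ already shows that at least $(1/2)\tau(x_0)$ iterations are needed, as noted just before the theorem. The whole proof therefore reduces to the opposite, \emph{exact-descent} estimate.

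Key Lemma (crux). For every non-minimizer $x\in\dom f$ and every steepest descent direction $+\chi_i-\chi_j$ at $x$, there is a nearest minimizer $y\in\arg\min f$ with $\|y-x\|_1=\tau(x)$ satisfying $y(i)>x(i)$ and $y(j)<x(j)$; consequently $\tau(x+\chi_i-\chi_j)=\tau(x)-2$. I would prove this using only the exchange axiom (M-EXC). Start from an arbitrary nearest minimizer $y$. If $i\in\suppm(x-y)$ and $j\in\suppp(x-y)$ (so $y$ is already ``compatible'' with the move), then the step $+\chi_i-\chi_j$ decreases the distance to $y$ in both coordinates, giving $\tau(x+\chi_i-\chi_j)\le\tau(x)-2$, and with the triangle bound this is an equality. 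If $y$ is not compatible, say $y(i)\le x(i)$, set $z=x+\chi_i-\chi_j$, which lies in $\dom f$ because $f'(x;i,j)=\phi(x)<+\infty$, and apply (M-EXC) to the pair $(z,y)$ with the index $i\in\suppp(z-y)$. This yields some $j'\in\suppm(z-y)$ with $f(z)+f(y)\ge f(x+\chi_{j'}-\chi_j)+f(y+\chi_i-\chi_{j'})$. Because $y$ minimizes $f$ the last term is $\ge f(y)$, and because $+\chi_i-\chi_j$ is steepest the first right-hand term equals $f(x)+f'(x;j',j)\ge f(x)+\phi(x)=f(z)$; hence both inequalities are tight, $f'(x;j',j)=\phi(x)$, and $y+\chi_i-\chi_{j'}$ is again a minimizer. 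A short coordinatewise distance computation then shows that $y+\chi_i-\chi_{j'}$ is a nearest minimizer more compatible with $(i,j)$ than $y$, or else one strictly closer than $\tau(x)$, which is impossible; iterating this replacement drives us to a compatible nearest minimizer.

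The delicate point — and the step I expect to be the main obstacle — is exactly this exchange argument: one must control the degenerate cases ($j'=j$, or $y(i)=x(i)$ so that raising coordinate $i$ does not immediately reduce the distance) and argue that the replacement $y\mapsto y+\chi_i-\chi_{j'}$ makes genuine progress toward compatibility while never leaving the set of nearest minimizers. This is where the minimality of $\phi(x)$ among \emph{all} directions is essential, since it is what forces the exchanged slope $f'(x;j',j)$ to equal $\phi(x)$ and thereby keeps the new vector optimal.

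With the Key Lemma in hand the theorem follows by induction on the iteration count. Writing $x_k$ for the vector after $k$ iterations, the lemma gives $\tau(x_k)=\tau(x_0)-2k$ as long as $x_k$ is not a minimizer, so the algorithm halts after exactly $(1/2)\tau(x_0)$ iterations at a minimizer $x^*$. Finally, since $x^*$ is reached from $x_0$ by $(1/2)\tau(x_0)$ steps, each of $\ell_1$-length $2$, the triangle inequality gives $\|x^*-x_0\|_1\le\tau(x_0)$, whereas $\|x^*-x_0\|_1\ge\tau(x_0)$ because $x^*\in\arg\min f$ and $\tau(x_0)$ is the distance from $x_0$ to the nearest minimizer; hence $\|x^*-x_0\|_1=\tau(x_0)$, as claimed.
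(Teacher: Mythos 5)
The paper does not actually prove this theorem: it is quoted verbatim from Shioura (2022, Corollary~4.2), so there is no in-paper argument to compare against. Judged on its own, your plan is sound and is essentially the argument used in the cited literature: termination at a minimizer follows from Theorem~\ref{thm:M-minimizer}, the lower bound of $(1/2)\tau(x_0)$ iterations follows from the triangle inequality, and everything hinges on your Key Lemma that a steepest descent step decreases $\tau$ by exactly $2$. Your exchange computation checks out: with $z=x+\chi_i-\chi_j$ and $i\in\suppp(z-y)$, (M-EXC) together with the optimality of $y$ and the minimality of $\phi(x)$ forces $f'(x;j',j)=\phi(x)<0$ (hence $j'\ne j$ and $j'\ne i$) and forces $y+\chi_i-\chi_{j'}$ to be optimal; the coordinatewise count then shows that $y(i)<x(i)$ is impossible for a nearest minimizer, while $y(i)=x(i)$ yields a new nearest minimizer whose $i$-th coordinate is corrected without touching coordinate $j$. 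The only place your sketch is underspecified is the second half of the ``iteration'': after coordinate $i$ is fixed you must still fix coordinate $j$, and for that you need the mirror-image application of (M-EXC) to the pair $(y,z)$ with $j\in\suppp(y-z)$, which produces $i'\in\suppm(y-z)$ satisfying $i'\ne i$ (otherwise $y(i)\le x(i)$, contradicting the first step) and $i'\ne j$, so that $y-\chi_j+\chi_{i'}$ is optimal and the same distance count applies. With that spelled out, at most two replacement steps produce a compatible nearest minimizer, $\tau$ drops by exactly $2$ per iteration, and the induction and the final triangle-inequality argument close the proof as you describe.
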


\subsection{Use of Long Step Length}
\label{sec:M-long-step}

 In the algorithm {\sc M-SD}, once a direction $+ \chi_i - \chi_j$
is selected, the  vector $x$ moves in the direction only by unit step length.
 We will modify  the algorithm
so that the current vector moves 
in the selected direction by multiple step length as far as the slope 
in the direction remains the same.
 
 In each iteration of the steepest descent algorithm,
the slope of $f$ in a steepest descent direction at $x$ (i.e., the value $\phi(x)$)
is non-decreasing. 

\begin{prop}[{cf.~\cite[Proposition 4.3]{Shioura2022}}]
\label{prop:M-mj-sdd-mono}
 Let $y \in \dom f$ be a vector with $\phi(y) < 0$,
and $i,j \in N$ be distinct elements such that
$f'(y;i,j) = \phi(y)$.
 Then, $\phi(y+ \chi_i - \chi_j) \ge \phi(y)$ holds.
 Moreover, for every distinct $h,k \in N$ it holds that 
$f'(y+ \chi_i - \chi_j;h,k) \ge \phi(y)$,
and if the inequality holds with equality, then 
$+ \chi_h - \chi_k$ is a steepest descent direction at $y+ \chi_i - \chi_j$.
\end{prop}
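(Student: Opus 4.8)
The plan is to establish the middle assertion first---that $f'(y+\chi_i-\chi_j;h,k)\ge\phi(y)$ for every distinct $h,k$---and then obtain the other two almost for free. Write $x=y+\chi_i-\chi_j$; since $f(x)=f(y)+\phi(y)$ is finite, $x\in\dom f$. Granting the middle assertion, every distinct pair $(h,k)$ satisfies $f'(x;h,k)\ge\phi(y)$ while every pair with $h=k$ gives $f'(x;h,h)=0\ge\phi(y)$, so $\phi(x)=\min_{h,k}f'(x;h,k)\ge\phi(y)$, which is the first assertion. For the third, if $f'(x;h,k)=\phi(y)$ for some distinct $h,k$, then $\phi(x)\le f'(x;h,k)=\phi(y)\le\phi(x)$, forcing $f'(x;h,k)=\phi(x)$; hence $+\chi_h-\chi_k$ is a steepest descent direction at $x$.

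For the middle assertion, fix distinct $h,k$ and rewrite the target inequality $f'(x;h,k)\ge\phi(y)$ as $f(x+\chi_h-\chi_k)+f(y)\ge 2f(x)$. If $p:=x+\chi_h-\chi_k\notin\dom f$ the left-hand side is $+\infty$ and there is nothing to prove, so assume $p\in\dom f$. The single tool I will use repeatedly is the minimality of $\phi(y)$: for any distinct $a,b\in N$, $f(y+\chi_a-\chi_b)\ge f(y)+\phi(y)=f(x)$. The argument then splits according to how $\{i,j\}$ and $\{h,k\}$ overlap. In the \emph{degenerate regime}, where $k=i$ or $h=j$, the displacement $\chi_i-\chi_j+\chi_h-\chi_k$ partially cancels, so $p$ is either $y$ itself or a single-direction neighbour of $y$; in both cases the minimality bound yields $f'(x;h,k)\ge 0\ge\phi(y)$ directly.

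In the \emph{main regime}, where $k\ne i$ and $h\ne j$, one checks that $i\in\suppp(p-y)$ and $\suppm(p-y)\subseteq\{j,k\}$, so I apply (M-EXC) to the pair $(p,y)$ with the index $i$. This returns some $\ell\in\{j,k\}$ with
\[
f(p)+f(y)\ \ge\ f(p-\chi_i+\chi_\ell)+f(y+\chi_i-\chi_\ell).
\]
When $\ell=j$ the right-hand terms are $f(y+\chi_h-\chi_k)$ and $f(x)$; when $\ell=k$ they are $f(y+\chi_h-\chi_j)$ and $f(y+\chi_i-\chi_k)$. In either case each of the two terms is the $f$-value at a single-direction neighbour of $y$, hence $\ge f(x)$ by the minimality bound (trivially so for the term equal to $f(x)$). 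Summing gives $f(p)+f(y)\ge 2f(x)$, as required.

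The conceptual content---a single steepest step cannot open up a strictly steeper direction---is carried entirely by this one application of (M-EXC); the main obstacle is purely the bookkeeping. I expect the fiddly part to be confirming, across all overlap patterns, that $i$ really lies in $\suppp(p-y)$ so that (M-EXC) is applicable, and that both exchanged points are always of the form $y+\chi_a-\chi_b$ with $a\ne b$, so that the minimality of $\phi(y)$ may be invoked. The three exclusions isolated by the case split ($k\ne i$, $h\ne j$, and the standing $h\ne k$) are precisely what guarantees $a\ne b$ in each of these single-direction slopes.
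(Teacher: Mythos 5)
Your proposal is correct and follows essentially the same route as the paper's proof: a single application of (M-EXC) to the pair $(y+\chi_i-\chi_j+\chi_h-\chi_k,\,y)$ with reference index $i$, a case split on whether $\{i,j\}$ and $\{h,k\}$ overlap, and bounding the exchanged single-step terms by $f(y)+\phi(y)$. The only difference is organizational — you establish the per-direction bound $f'(y+\chi_i-\chi_j;h,k)\ge\phi(y)$ for all distinct $h,k$ and deduce $\phi(y+\chi_i-\chi_j)\ge\phi(y)$ from it, whereas the paper argues directly with the minimizing pair and notes the rest follows — which does not change the substance.
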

\noindent
 For readers' convenience, we provide a proof of Proposition \ref{prop:M-mj-sdd-mono}
in Section \ref{proof:prop:M-mj-sdd-mono} of Appendix.

 This monotonicity implies that a steepest descent direction 
 chosen in Step 1 of {\sc M-SD} can be used again in the next iteration
if the slope in the direction remains the same.
 Based on this observation, we modify the algorithm as follows:
 once a steepest descent direction $+ \chi_i - \chi_j$ is selected in Step 1,
the vector $x$ is updated to $x+\bar{c}(x;i,j)(\chi_i- \chi_j)$
with the step length $\bar{c}(x;i, j)$ given  by 
\begin{align}
\label{eqn:def:M-bar_c}
\bar{c}(x;i, j) = \max\{\lambda \in \Z_+ \mid 
 f(x + \lambda(\chi_i - \chi_j))- f(x) = \lambda f'(x;i, j)   \}.
\end{align}
 This idea can be incorporated in the algorithm {\sc M-SD} 
by replacing Step 3 with the following:

\begin{flushleft}
  \textbf{Step 3:} 
  Set $x:=x+\bar{c}(x;i,j)(\chi_i- \chi_j)$ and go to Step 1.
\end{flushleft}

  The modified algorithm, denoted as {\sc M-LSD}, can be seen as a special implementation of 
the basic algorithm {\sc M-SD}, and therefore the theoretical time bound
in Theorem \ref{thm:M_sd_bound} can be also applied to {\sc M-LSD} as well.
 While it is expected that {\sc M-LSD} runs faster 
than {\sc M-SD} in practice,
no better theoretical time bound for {\sc M-LSD} is known so far.

To obtain an alternative theoretical bound for a long-step version of the steepest
descent algorithm, 
we further modify the algorithm
by selecting steepest descent directions in some specific order.
At the termination of the modified algorithm {\sc M-LSD2}, 
the output vector $x$ satisfies 
$\phi(x)=0$, and therefore it is a minimizer of $f$ by Theorem \ref{thm:M-minimizer}.

\begin{flushleft}
 \textbf{Algorithm} {\sc M-LSD2} 
\\
 \textbf{Step 0:} 
 Let $x_0 \in \dom f$ be an arbitrarily chosen initial vector.
 Set $x:=x_0$.
\\
 \textbf{Step 1:} 
 If $\phi(x) = 0$, then output $x$ and stop.
\\
 \textbf{Step 2:} 
 Let $x'$ be the output of the procedure {\sc M-IncSlope}($x$). 
Set $x:=x'$.
Go to Step 1.
\end{flushleft}

  Given a vector $x \in \dom f$, the procedure {\sc M-IncSlope}($x$)
initially sets the vector $y$ to $x$, 
repeatedly updates $y$ 
by using steepest descent directions with slope equal to $\phi(x)$,
and finally obtain $y$ with $\phi(y) > \phi(x)$.
 The outer iteration of the procedure consists of Steps 1 and~2, 
and there is an inner iteration in Step 1. 
 In Step 1 of each outer iteration,
we select any $i \in N$ first.
 Then, for each $j \in N \setminus \{i\}$,
we check whether the slope $f'(y;i,j)$ is equal to $\phi(x)$;
if it is true, $+ \chi_i - \chi_j$ is a steepest descent direction by
Proposition \ref{prop:M-mj-sdd-mono},
and the vector $y$ is updated to
$y+\bar{c}(y;i,j)(\chi_i - \chi_j)$.

\begin{flushleft}
 \textbf{Procedure} {\sc M-IncSlope}($x$) 
\\
 \textbf{Step 0:} 
Set $y := x$ and $N^+:=N$.
\\
 \textbf{Step 1:} 
Take any $i \in N^+$, set $N^-_i := N \setminus\{i\}$, 
and do the following steps.
\\
\quad 
 \textbf{Step 1-1:} 
 Take any $j \in N^-_i$.
 If $f'(y;i,j) = \phi(x)$, set $y:= y+\bar{c}(y;i,j)(\chi_i - \chi_j)$.
\\
\quad 
 \textbf{Step 1-2:} 
 Set $N^-_i := N^-_i  \setminus\{j\}$.  
If $N^-_i = \emptyset$, then go to Step 2.
Otherwise, go to Step 1-1.
\\
 \textbf{Step 2:} 
 Set $N^+ := N^+ \setminus\{i\}$. 
 If $N^+ = \emptyset$, then output $y$ and stop.
Otherwise, go to Step 1.
\end{flushleft}

 The next theorem shows that $\phi(x)$ increases strictly in each iteration of 
{\sc  M-LSD2}. 
 Proof  is given in Section  \ref{sec:proofs-M}.

\begin{theorem}
\label{thm:M-inc_min_slope}
 For a vector $x \in \dom f$ with $\phi(x) < 0$,
the output $x'$ of the procedure {\sc M-IncSlope}$(x)$
satisfies 
$\phi(x')  > \phi(x)$.
\end{theorem}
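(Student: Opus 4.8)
The plan is to show that the output $x'$ has no \emph{tight} direction, i.e. no pair $(h,k)$ with $f'(x';h,k)=\mu$, where $\mu:=\phi(x)<0$; since $\phi(x')\ge\mu$ holds throughout and $\phi(x')=\mu$ precisely when some direction is tight, this is equivalent to the assertion $\phi(x')>\mu$. First I would record a monotonicity invariant for the whole run of {\sc M-IncSlope}. Every update in Step~1-1 is triggered by $f'(y;i,j)=\phi(x)=\mu$; since $\phi(y)\ge\mu$ will be maintained inductively, this forces $\phi(y)=\mu$ and makes $+\chi_i-\chi_j$ a genuine steepest descent direction, so Proposition~\ref{prop:M-mj-sdd-mono} applies. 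A long step $y\mapsto y+\bar c(y;i,j)(\chi_i-\chi_j)$ decomposes into $\bar c$ unit steepest descent steps, because the slope stays equal to $\mu$ along the whole segment by the very definition of $\bar c$; applying Proposition~\ref{prop:M-mj-sdd-mono} to each unit step yields, at every stage, both $\phi(y)\ge\mu$ and the uniform lower bound $f'(y;h,k)\ge\mu$ for all directions. In particular, if $\phi$ ever strictly exceeds $\mu$ during the run, then no further update can fire and the output already satisfies $\phi(x')>\mu$; so the only case to rule out is that $\phi(y)\equiv\mu$ persists up to termination.

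Next I would exploit the choice of step length. By definition of $\bar c(y;i,j)$, immediately after $(i,j)$ is processed the slope in that very direction is strictly above $\mu$. Encoding the tight directions as the arcs of a directed graph $G(y)=\{(h,k)\in N\times N: f'(y;h,k)=\mu\}$, processing $(i,j)$ deletes the arc $(i,j)$, and the substance of the proof is to control which arcs a (long) steepest descent step can \emph{create}. I would establish a localization lemma—bootstrapping the ``Moreover'' clause of Proposition~\ref{prop:M-mj-sdd-mono} and inducting over the unit steps of a long step, via (M-EXC) applied to carefully chosen pairs—describing exactly which directions can become newly tight after a step in direction $+\chi_i-\chi_j$ (these are governed by the two coordinates $i$ and $j$ whose values change). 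Using it I would prove the within-iteration invariant: during the outer iteration with a fixed tail $i$, no step in a direction $+\chi_i-\chi_{\cdot}$ can re-create a tight arc of the form $(i,\cdot)$, so that when that outer iteration ends, every direction $+\chi_i-\chi_j$ $(j\in N)$ has slope strictly above $\mu$.

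The main obstacle is \emph{global} persistence across outer iterations. A step in a later outer iteration—in particular one that decreases the coordinate $i$—can a priori lower $f'(\cdot;i,k)$ back down to $\mu$ and thereby resurrect a previously cleared arc, so the naive ``once cleared, always cleared'' invariant fails, and one can sketch plausible cyclic re-tightening scenarios whenever the localization lemma is taken in too weak a form. The crux is therefore to pin the localization down sharply enough, and to combine it with the fact that the order in both loops is arbitrary, so as to guarantee that no tight direction survives to termination. I would attack this along the lines of the L-convex analysis referenced in the paper (the ``minimal steepest descent direction'' technique): maintain the arc set $G(y)$ together with a structural quantity that the long-step updates are shown never to worsen, and use the strict descent $f(y+\bar c(y;i,j)(\chi_i-\chi_j))-f(y)=\bar c(y;i,j)\,\mu<0$ at each processed tight direction to preclude a return to the tight regime. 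Once all tight directions are shown absent from $x'$, the inequality $\phi(x')>\mu=\phi(x)$ follows, completing the proof.
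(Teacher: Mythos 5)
There is a genuine gap, and it sits exactly at the point you yourself flag as ``the main obstacle'': persistence of cleared directions across outer iterations. Your within-iteration argument is sound and matches the paper's Lemma~\ref{lem:M-slope_inc_1-2} (every move in the outer iteration for a fixed $i$ has the form $+\chi_i-\chi_k$, and one application of (M-EXC) to $y$ and $y+\chi_i-\chi_k+\chi_i-\chi_j$ shows $f'(\cdot\,;i,j)$ cannot decrease under such moves, while the definition of $\bar c$ pushes each processed slope strictly above $\mu$). But for the cross-iteration step you assert that the ``once cleared, always cleared'' invariant fails and retreat to an unspecified ``structural quantity'' imported from the L-convex setting. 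That is not a proof, and it is also aimed at the wrong target: the invariant \emph{does} hold once you state it for the whole star of $i$ rather than for individual arcs. This is the paper's Lemma~\ref{lem:M-slope_inc_2}: if $f'(y;i,j)>\phi(x)$ for \emph{every} $j\ne i$ and $+\chi_h-\chi_k$ is a steepest descent direction at $y$, then $f'(y+\chi_h-\chi_k;i,j)>\phi(x)$ for every $j\ne i$ --- including when $k=i$, i.e., when the later step decreases the coordinate $i$, which is precisely the resurrection scenario you worry about.

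The reason the star-level statement goes through where the single-arc version looks fragile is visible in the (M-EXC) computation. For $\tilde y=y+\chi_h-\chi_k+\chi_i-\chi_{j^*}$ with $i\ne k$ and $j^*\ne h$, applying (M-EXC) to $y$, $\tilde y$ and $k\in\suppp(y-\tilde y)$ bounds $f(y)+f(\tilde y)$ below by one of $f(y+\chi_h-\chi_k)+f(y+\chi_i-\chi_{j^*})$ or $f(y+\chi_i-\chi_k)+f(y+\chi_h-\chi_{j^*})$; since the \emph{entire} star of $i$ is cleared, both $f(y+\chi_i-\chi_{j^*})$ and $f(y+\chi_i-\chi_k)$ exceed $f(y)+\phi(x)$, and steepness of $+\chi_h-\chi_k$ handles the remaining term, yielding $f(\tilde y)-f(y+\chi_h-\chi_k)>\phi(x)$. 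The degenerate cases $i=k$ or $j^*=h$ collapse $\tilde y$ to a single-step neighbour of $y$ and follow from $f'(y;h',k')-f'(y;h,k)\ge 0>\phi(x)$. No tight-arc graph, potential function, or minimal-direction selection rule is needed; the order of both loops really can be arbitrary. Until you supply this persistence lemma (or an equivalent), your argument does not close.
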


 We analyze the running time of the algorithm {\sc M-LSD2}. 
 In the following, we assume that 
$f$ is an integer-valued function is given as an evaluation oracle that
requires constant time for function value evaluation.

 By Theorem \ref{thm:M-inc_min_slope},
the algorithm {\sc  M-LSD2} 
terminates in at most $|\phi(x_0)|$ iterations if $f$ is an integer-valued
function.
 The step length $\bar{c}(y;i,j)$ 
for  $y \in \dom f$ and $i, j \in N$ can be 
computed in $\Oh(\log L_\infty)$ time by binary search, where
\begin{align}
\label{eqn:diameter}
   L_\infty = \max\{\|y-y'\|_\infty \mid y, y' \in \dom f\},
\end{align}
which is the ``diameter'' of $\dom f$.
The step length $\bar{c}(y;i,j)$ is computed once
for every pair of distinct $i, j \in N$
in the procedure {\sc M-IncSlope}$(x)$. 
 Therefore, {\sc M-IncSlope}$(x)$ runs in
$\Oh(n^2 \log L_\infty)$ time.
 The discussion above, together with Theorem \ref{thm:M_sd_bound}, 
implies the following time bound for {\sc M-LSD2}.

\begin{theorem}
\label{thm:M-runtime}
 Let
$f: \Z^n \to \Z\cup \{+ \infty\}$ be an integer-valued M-convex function 
with bounded $\dom f$,
and assume that the function value of $f$ can be evaluated in constant time.
 Then, the algorithm {\sc M-LSD2} outputs a minimizer of $f$
in
$\Oh(n^2 (\log L_\infty) \cdot \min\{ |\phi(x_0)|, \tau(x_0)\})$ time.
\end{theorem}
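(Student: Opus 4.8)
The plan is to split the statement into correctness and running time, and to reduce the running-time estimate to a bound on the number of outer iterations of {\sc M-LSD2}, i.e.\ the number $K$ of calls to the procedure {\sc M-IncSlope}. Correctness is immediate: {\sc M-LSD2} halts only when $\phi(x)=0$, and since $\phi(x)\le 0$ holds for every $x\in\dom f$, Theorem~\ref{thm:M-minimizer} guarantees that the output is a minimizer of $f$. For the running time, I would first charge the cost of a single call {\sc M-IncSlope}$(x)$. Each ordered pair $(i,j)$ with $i\ne j$ is processed at most once, there are $\Oh(n^2)$ such pairs, and for each the slope $f'(y;i,j)$ is evaluated in constant time; only when this slope equals $\phi(x)$ is the step length $\bar{c}(y;i,j)$ computed, and this is done by binary search in $\Oh(\log L_\infty)$ time using the boundedness of $\dom f$. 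The initial evaluation of $\phi(x)$ costs $\Oh(n^2)$ and is dominated. Hence one call runs in $\Oh(n^2\log L_\infty)$ time, the total running time is $\Oh(K\cdot n^2\log L_\infty)$, and it suffices to prove $K\le\min\{|\phi(x_0)|,\tau(x_0)\}$.

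For the first of the two bounds I would invoke Theorem~\ref{thm:M-inc_min_slope}: as long as $\phi(x)<0$, each call strictly increases $\phi(x)$. Because $f$ is integer-valued, $\phi(x)$ is integer-valued, so every increase is by at least one; starting from $\phi(x_0)\le 0$ and terminating at $\phi(x)=0$ forces $K\le|\phi(x_0)|$.

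For the second bound I would exploit that {\sc M-LSD2} is a special implementation of the basic algorithm {\sc M-SD}. Whenever {\sc M-IncSlope}$(x)$ moves along $+\chi_i-\chi_j$ it does so precisely because $f'(y;i,j)=\phi(x)=\phi(y)$, which by Proposition~\ref{prop:M-mj-sdd-mono} makes $+\chi_i-\chi_j$ a steepest descent direction at $y$; moreover, by the definition \eqref{eqn:def:M-bar_c} of $\bar{c}(y;i,j)$, each of the $\bar{c}(y;i,j)$ unit moves composing this long step is itself a unit step along a steepest descent direction. Consequently the trajectory of {\sc M-LSD2} decomposes into exactly $(1/2)\tau(x_0)$ unit steps of {\sc M-SD} by Theorem~\ref{thm:M_sd_bound}, so the number of genuine moves (those with $\bar{c}\ge 1$) is at most $(1/2)\tau(x_0)$, since each contributes at least one such unit step. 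As every call to {\sc M-IncSlope} makes at least one genuine move — otherwise $y$ would be unchanged and $\phi$ could not strictly increase, contradicting Theorem~\ref{thm:M-inc_min_slope} — we obtain $K\le(1/2)\tau(x_0)\le\tau(x_0)$. Combining the two bounds gives $K\le\min\{|\phi(x_0)|,\tau(x_0)\}$, and multiplying by the per-call cost yields the claimed time bound.

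The main obstacle I anticipate is the rigorous justification that {\sc M-LSD2} is a special implementation of {\sc M-SD}. One must verify that throughout a call {\sc M-IncSlope}$(x)$ the identity $\phi(y)=\phi(x)$ persists until the last genuine move, so that every direction actually selected is steepest and every unit sub-step is a legitimate {\sc M-SD} step (note that once $\phi(y)>\phi(x)$ no remaining direction has slope $\phi(x)$, so moves cease). This is exactly where Proposition~\ref{prop:M-mj-sdd-mono} together with the specific order in which the pairs $(i,j)$ are processed does the real work, and it is essentially the content underlying Theorem~\ref{thm:M-inc_min_slope}, which I am permitted to assume here.
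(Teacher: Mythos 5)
Your proposal is correct and follows essentially the same route as the paper: the per-call cost $\Oh(n^2\log L_\infty)$ of {\sc M-IncSlope} via binary search over each pair, the bound $|\phi(x_0)|$ on the number of calls from Theorem~\ref{thm:M-inc_min_slope} with integrality, and the bound $\tau(x_0)$ from viewing {\sc M-LSD2} as a special implementation of {\sc M-SD} and invoking Theorem~\ref{thm:M_sd_bound}. Your write-up is in fact somewhat more explicit than the paper's on the $\tau(x_0)$ half (the paper compresses it to ``together with Theorem~\ref{thm:M_sd_bound}''), and your justification via Proposition~\ref{prop:M-mj-sdd-mono} that every unit sub-step is a legitimate steepest descent step is exactly the intended argument.
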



\subsection{Proof of Theorem \ref{thm:M-inc_min_slope}}
\label{sec:proofs-M}

 Let $x \in \dom f$ be a vector  with $\phi(x) < 0$,
and $x'$ be the output of the procedure {\sc M-IncSlope}$(x)$.
 The goal of this section is to prove the inequality $\phi(x')  > \phi(x)$. 

 Let us consider Step 1 in some outer iteration
of the procedure {\sc M-IncSlope}$(x)$, and
let $i\in N$ be the element taken at the beginning of Step 1. 
By the definition of the step length $\bar{c}(y;i,j)$,
vector $y$ at the end of Step 1-1 
satisfies the inequality $f'({y};i,j) > \phi(x)\ (=\phi(y))$.
 We first show that this inequality is preserved until the end
of the inner iterations in Step 1.

\begin{lemma}
\label{lem:M-slope_inc_1-2}
Let $y \in \dom f$ be a vector with $\phi(y)=\phi(x)$,
and $i,j \in N$ be distinct elements such that $f'(y; i, j) > \phi(x)$.
 For $k \in N \setminus\{i,j\}$ with $y+ \chi_i - \chi_k \in \dom f$, we have
$f'(y+ \chi_i - \chi_k;  i, j) \ge f'(y; i, j) > \phi(x)$.
\end{lemma}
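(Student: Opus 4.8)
The plan is to reduce the claimed inequality to a single four-point inequality among values of $f$ and then read it off directly from the exchange axiom (M-EXC). Writing $u = y + 2\chi_i - \chi_j - \chi_k$ and unfolding the definition of $f'$, we have $f'(y + \chi_i - \chi_k; i, j) = f(u) - f(y + \chi_i - \chi_k)$ and $f'(y; i, j) = f(y + \chi_i - \chi_j) - f(y)$. Since $y \in \dom f$ and $y + \chi_i - \chi_k \in \dom f$, the two subtracted terms are finite, so the desired inequality $f'(y + \chi_i - \chi_k; i, j) \ge f'(y; i, j)$ is equivalent, in $\Rinf$, to
\[
f(u) + f(y) \ge f(y + \chi_i - \chi_j) + f(y + \chi_i - \chi_k).
\]
Thus it suffices to establish this one inequality; the strict inequality $f'(y;i,j) > \phi(x)$ is then inherited verbatim from the hypothesis.

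If $u \notin \dom f$, then $f(u) = +\infty$ and the inequality holds trivially, so I may assume $u \in \dom f$ and apply (M-EXC) to the pair $u, y \in \dom f$. Here I would use the distinctness of $i, j, k$: we are given $i \ne j$ and $k \in N \setminus \{i, j\}$, so the three indices are pairwise distinct and $u - y = 2\chi_i - \chi_j - \chi_k$ gives exactly $\suppp(u - y) = \{i\}$ and $\suppm(u - y) = \{j, k\}$. Choosing $i \in \suppp(u - y)$ as the exchange index, (M-EXC) yields some $l \in \{j, k\}$ with
\[
f(u) + f(y) \ge f(u - \chi_i + \chi_l) + f(y + \chi_i - \chi_l).
\]

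The key observation, and the only point that needs a little organizing, is that both admissible values of $l$ produce the very same inequality. For $l = j$ one checks $u - \chi_i + \chi_j = y + \chi_i - \chi_k$, leaving $y + \chi_i - \chi_j$ as the other moved point; for $l = k$ one checks $u - \chi_i + \chi_k = y + \chi_i - \chi_j$, leaving $y + \chi_i - \chi_k$. In either case the right-hand side is $f(y + \chi_i - \chi_j) + f(y + \chi_i - \chi_k)$, so whichever element (M-EXC) returns, it delivers precisely the four-point inequality displayed above. I do not expect a genuine obstacle: the two substitutions are routine, and the distinctness of $i, j, k$ guarantees the support sets are exactly $\{i\}$ and $\{j,k\}$, so a single application of (M-EXC) with index $i$ closes the argument once the trivial case $u \notin \dom f$ is separated out.
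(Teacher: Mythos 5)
Your proof is correct and is essentially the paper's argument: both reduce the claim to the single four-point inequality $f(y+2\chi_i-\chi_j-\chi_k)+f(y)\ge f(y+\chi_i-\chi_j)+f(y+\chi_i-\chi_k)$ and obtain it from one application of (M-EXC) to the pair $\{y,\,y+2\chi_i-\chi_j-\chi_k\}$. The only difference is orientation: the paper exchanges along $j\in\suppp(y-\tilde{y})$, where $\suppm(y-\tilde{y})=\{i\}$ forces the partner uniquely, whereas you exchange along $i\in\suppp(u-y)$ and then correctly observe that both admissible partners $j$ and $k$ yield the identical inequality.
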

 
\begin{proof}
 Let $\tilde{y} =y + \chi_i - \chi_k + \chi_i - \chi_j$.
 It suffice to show that $f(\tilde{y}) - f(y+ \chi_i - \chi_k) \ge f'(y; i, j)$.
since $f'(y; i, j) > \phi(x)$.
 If $f(\tilde{y}) = + \infty$ then we are done; hence we assume $\tilde{y} \in \dom f$.
 By (M-EXC) applied to $y$, $\tilde{y}$, and $j \in \suppp(y- \tilde{y})$, it holds that
\begin{align*}
f(y) + f(\tilde{y}) 
& \ge  f(y - \chi_j + \chi_i) + f(\tilde{y} + \chi_j - \chi_i) 
=  f(y - \chi_j + \chi_i) + f(y + \chi_i - \chi_k)
\end{align*}
since $\suppm(y- \tilde{y})=\{i\}$.
 It follows that
\[
f(\tilde{y}) - f(y+ \chi_i - \chi_k)
 \ge f(y + \chi_i - \chi_j) - f(y) = f'(y;i,j).
\]
\end{proof}

  Repeated application of Lemma~\ref{lem:M-slope_inc_1-2} implies that
vector $y$ at the end of Step 1 satisfies the inequalities
\begin{align}
\label{eqn:M_slope_inc_1:1}
f'(y; i, j) > \phi(x) \qquad (j \in N\setminus\{i\}).
\end{align}

 Suppose that the inequalities \eqref{eqn:M_slope_inc_1:1} for some $i \in N$
is satisfied by the vector $y$ at the end of Step 1 in some outer iteration.
 We then show that these  inequalities are preserved
in the following outer iterations.

\begin{lemma}
\label{lem:M-slope_inc_2}
Let $y \in \dom f$ be vectors with $\phi(y)=\phi(x)$,
and $i \in N$ be an element satisfying $f'(y; i, j) > \phi(x)$ for every $j \in N\setminus\{i\}$.
Also, let $h, k \in N$ be distinct elements such that
$f'(y;h,k) = \phi(x)$, i.e., $+\chi_h - \chi_k$ is a steepest descent direction at $y$.
 Then, 
$f'(y+\chi_h - \chi_k; i, j) > \phi(x)$ holds for every $j \in N\setminus\{i\}$.
\end{lemma}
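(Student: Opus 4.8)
The plan is to write $\mu := \phi(x) = \phi(y)$, set $v := y + \chi_h - \chi_k$ for the moved vector and $w := v + \chi_i - \chi_j = y + \chi_h - \chi_k + \chi_i - \chi_j$, and prove directly that $f'(v;i,j) = f(w) - f(v) > \mu$. Since $f'(y;h,k) = \mu$ is finite we have $f(v) = f(y) + \mu$, so $v \in \dom f$; and if $w \notin \dom f$ then $f'(v;i,j) = +\infty > \mu$ trivially, so I may assume $w \in \dom f$. The whole argument then reduces to bounding $f(w)$ from below. I would first record that $h \ne i$: otherwise $f'(y;i,k) = f'(y;h,k) = \mu$ with $k \ne i$, contradicting the hypothesis $f'(y;i,j') > \mu$ for all $j' \in N \setminus \{i\}$ (take $j' = k$). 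After that I split into cases according to the coincidences among $i,j,h,k$, the point being that when $i$ and $h$ survive as the two positive coordinates of $w - y$ the exchange axiom applies cleanly, whereas some coincidences collapse $w - y$ to a single direction and must be treated by hand.

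The two degenerate cases are quick and use only the minimality of $\phi(y)$ together with the sign $\mu < 0$. If $i = k$, then $w = y + \chi_h - \chi_j$, so $f(w) = f(y) + f'(y;h,j)$ and hence $f'(v;i,j) = f'(y;h,j) - \mu \ge 0 > \mu$, using $f'(y;h,j) \ge \phi(y) = \mu$. If instead $j = h$ (and $i \ne k$), then $w = y + \chi_i - \chi_k$, so $f(w) = f(y) + f'(y;i,k)$ and $f'(v;i,j) = f'(y;i,k) - \mu > 0 > \mu$, this time invoking the hypothesis $f'(y;i,k) > \mu$, which is valid since $k \ne i$.

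In the remaining case $i \ne k$ and $h \ne j$, one checks $(w-y)(i) = 1$, so $i \in \suppp(w-y)$ and $\suppm(w-y) \subseteq \{j,k\}$. Applying (M-EXC) to $w$, $y$, and $i$ yields some $\ell \in \suppm(w-y)$ with $f(w) + f(y) \ge f(w - \chi_i + \chi_\ell) + f(y + \chi_i - \chi_\ell)$. If $\ell = j$, the first term on the right is $f(v)$ and the second is $f(y + \chi_i - \chi_j)$, giving $f(w) - f(v) \ge f'(y;i,j) > \mu$ directly from the hypothesis. If $\ell = k$, the first term is $f(y + \chi_h - \chi_j) \ge f(y) + \mu$ and the second is $f(y + \chi_i - \chi_k) = f(y) + f'(y;i,k) > f(y) + \mu$, whence $f(w) > f(y) + 2\mu = f(v) + \mu$, again the desired bound.

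The main obstacle is not any single inequality but getting the case division right: the exchange axiom delivers a lower bound on $f(w)$ only when $i$ and $h$ are genuinely the two positive coordinates of $w - y$, and one must notice in advance which coincidences ($i = k$, $j = h$) destroy that structure and dispatch them separately. A secondary point to keep straight is that (M-EXC) only returns \emph{some} partner $\ell \in \{j,k\}$, so both possibilities have to be shown to yield $f(w) - f(v) > \mu$; fortunately the $\ell = j$ branch uses the strict hypothesis on direction $+\chi_i - \chi_j$ while the $\ell = k$ branch uses the strict hypothesis on $+\chi_i - \chi_k$ together with minimality, so each branch closes.
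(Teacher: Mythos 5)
Your proof is correct and follows essentially the same route as the paper's: dispatch the degenerate coincidences $i=k$ and $j=h$ via the steepest-descent property of $+\chi_h-\chi_k$, then apply (M-EXC) in the remaining case, where both possible exchange partners ($j$ and $k$) yield the bound using the strict hypothesis on directions $+\chi_i-\chi_{j'}$. The only cosmetic difference is that you apply the exchange axiom to $w$, $y$, and $i\in\suppp(w-y)$ whereas the paper applies it to $y$, $\tilde y$, and $k\in\suppp(y-\tilde y)$; the resulting two candidate splittings are identical.
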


\begin{proof}
 We fix $j^* \in N\setminus \{i\}$ and 
denote $\tilde{y}=y+\chi_h - \chi_k + \chi_i - \chi_{j^*}$. 
 It suffices to show that 
\begin{align}
  \label{eqn:lem:M-slope_inc_2-1}
f(\tilde{y}) - f(y+\chi_h - \chi_k) > \phi(x).
\end{align}
 If $f(\tilde{y}) = + \infty$ then we are done; hence we assume $\tilde{y} \in \dom f$.

We first consider the case with $i\ne k$ and $j^*\ne h$. 
  Since $\suppp(y - \tilde{y})=\{k,{j^*}\}$,
 (M-EXC) applied to $y$, $\tilde{y}$, and $k \in \suppp(y- \tilde{y})$ implies that 
\begin{align*}
f(y) + f(\tilde{y}) 
& \ge \min\{f(y  + \chi_h - \chi_k) + f(y+ \chi_i - \chi_{j^*}), 
f(y+ \chi_{i} - \chi_k  ) + f(y+ \chi_h - \chi_{j^*})\}\\
& \ge f(y  + \chi_h - \chi_k) + \min\{f(y+ \chi_i - \chi_{j^*}), f(y + \chi_i- \chi_k)\}\\
& > f(y + \chi_h  - \chi_k) + f(y)+\phi(x),
\end{align*}
where the second inequality is by the assumption that
$+\chi_h - \chi_k$ is a steepest descent direction at $y$,
and the last inequality is by 
$f'(y; i, j) > \phi(x)$ $(j \in N\setminus\{i\})$.
 Hence, \eqref{eqn:lem:M-slope_inc_2-1} follows.

 We next consider the case where $i=k$ or $j^* = h$ holds.
 Then, we have $\tilde{y} = y+\chi_{h'}-\chi_{k'}$
for some $h', k' \in N$, where it is possible that $h'=k'$.
 Since $+\chi_h - \chi_k$ is a steepest descent direction at $y$, 
it holds that $f'(y;h',k') \ge  f'(y;h,k)$.
 Therefore, we obtain the inequality \eqref{eqn:lem:M-slope_inc_2-1} as follows:
\begin{align*}
f(\tilde{y}) - f(y+\chi_h - \chi_k) 
&= f({y}+\chi_{h'}-\chi_{k'}) - f(y+\chi_h - \chi_k) \\
& = f'(y;h',k')  - f'(y;h,k) \ge 0 > \phi(x).
\end{align*}
\end{proof}

 By repeated application of Lemma \ref{lem:M-slope_inc_2}, we obtain
the inequalities $f'(x'; i, j) > \phi(x)$ 
$(i,j \in N,\ i\ne j)$ 
for the vector $x'$ at the end of the procedure {\sc M-IncSlope}$(x)$,
implying the desired inequality $\phi(x') > \phi(x)$.

\subsection{Some Remarks}
 
\begin{remark}\rm
  The procedure {\sc M-IncSlope}$(x)$
uses each direction $+ \chi_i - \chi_j$ at most once for update of the current vector.
 Therefore, Theorem \ref{thm:M-inc_min_slope} implies that
if some direction is used twice in the algorithm {\sc M-LSD2},
then its slope in that direction must be  different. 
 In contrast, it can happen that the basic algorithm {\sc M-LSD} uses the same direction
twice (or more) but the slope remains the same, as shown in the following example.

 Let us consider the function $f: \Z^4 \to \Rinf$ given as
\begin{align*}
& \textstyle
\dom f = \{x \in \Z_+^4 \mid \sum_{i=1}^4 x(i)=3,\ x(1) \le 2,\ x(2)\le 2,\ 
x(3) \le 1,\ x(4)\le 1\} \setminus\{(0,2,1,0)\},
\\
&
f(x) = - x(1)-x(3) \quad (x \in \dom f\setminus\{(2,0,0,1)\}),
\qquad
f(2,0,0,1) = -1.
\end{align*}
 This is an M-convex function; indeed, we can show this by checking (M-EXC)
 for $f$.

 Suppose that the algorithm {\sc M-LSD} is applied to function $f$
with the initial vector $x_0 = (0,2,0,1)$.
 A possible trajectory of the vector $x$ generated by M-LSD is
\[
(0,2,0,1) \to (1,1,0,1) \to (1,1,1,0) \to (2,0,1,0),
\]
where the direction $+\chi_1 - \chi_2=(+1,-1,0,0)$ 
is used in the first and the third iterations, and its slope is equal to $-1$ in both of the iterations.

 We then apply the algorithm {\sc M-LSD2} with the same initial vector $x_0$,
and select $i=1$ in Step 1 of the first iteration.
 Then, a possible trajectory of the vector $x$ is
\[
(0,2,0,1) \to (1,1,0,1) \to (2,1,0,0) \to (2,0,1,0),
\]
where different directions are used in each iteration.
\qed  
\end{remark}

\begin{remark}\rm
 We have shown that 
the number of iterations required by {\sc M-LSD2} is at most $|\phi(x_0)|$
if $f$ is integer-valued.
 We can also show that
the number of iterations is bounded by
$\sqrt{2(f(x_0) - \min f)}$
with $\min f = \min\{f(x) \mid x \in \dom f\}$.

 Let $k$ be the number of iterations required by {\sc M-LSD2},
and for $h=1,2,\ldots, k$, 
 let $x_h \in \dom f$ be vector $x$ at the end of the $h$-th iteration
of the algorithm.
 Then, $x_k$ is the output of the algorithm, which satisfies $\phi(x_k)=0$.

 We show that $(1/2) k^2 \le f(x_0) - \min f$ holds.
 The values $\phi(x_h)$ $(h=0,1,\ldots, k)$ are integers by assumption,
and strictly increasing by Theorem \ref{thm:M-inc_min_slope}.
 This fact, together with $\phi(x_k)=0$, implies that
$|\phi(x_h)| \ge k-h \ (h=0,1,\ldots, k)$.
 It follows that
\[
f(x_{h-1}) - f(x_{h}) 
 \ge |\phi(x_{h-1})| \ge k-h+1 \quad (h=1,2,\ldots, k).
\]
 Hence, we have
\begin{align*}
(1/2) k^2
 \le \sum_{h=1}^k (k-h+1)  
& \le \sum_{h=1}^k [f(x_{h-1}) - f(x_{h})]  
 = f(x_{0}) - f(x_{k}) = f(x_0) - \min f.
\end{align*}
The inequality $k \le \sqrt{2(f(x_0) - \min f)}$ follows immediately
from this.
\qed
\end{remark}

\begin{remark}\rm
 Theorem \ref{thm:M_sd_bound} implies monotonicity of vector $x$
in the basic steepest descent algorithm {\sc M-SD}.
\begin{prop}
\label{prop:M-sda-monotonicity}
 Let $x^*$ be the output of the algorithm {\sc M-SD}.
 In each iteration of the algorithm, component $x(i)$ $(i \in N)$ of
vector $x$ is non-increasing if $x^*(i) \le x_0(i)$ and
non-decreasing if $x^*(i) \ge x_0(i)$.
\end{prop}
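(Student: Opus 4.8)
The plan is to read off the coordinatewise monotonicity from the exact iteration count provided by Theorem~\ref{thm:M_sd_bound}. By that theorem, the output $x^*$ of {\sc M-SD} satisfies $\|x^* - x_0\|_1 = \tau(x_0)$, and the algorithm performs exactly $(1/2)\tau(x_0)$ iterations. Each iteration in Step~3 replaces $x$ by $x + \chi_i - \chi_j$ with $i \ne j$ (the case $i=j$ gives $f'(x;i,j)=0=\phi(x)$ and triggers termination at Step~2), so it alters exactly two coordinates by $\pm 1$ and hence changes $\|x - x^*\|_1$ by an element of $\{-2,0,+2\}$. Since $\|x - x^*\|_1$ must drop from $\tau(x_0)$ to $0$ over exactly $(1/2)\tau(x_0)$ iterations, the only possibility is that \emph{every} iteration decreases $\|x - x^*\|_1$ by exactly $2$. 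This equivalence between the exact count and a uniform decrease of $2$ is the crux of the argument and the main leverage point.

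First I would record the immediate consequence of a decrease of exactly $2$: in the move $+\chi_i - \chi_j$ applied to the current vector $x$, both affected coordinates must move strictly toward $x^*$, i.e.\ the increased coordinate satisfies $x(i) < x^*(i)$ and the decreased coordinate satisfies $x(j) > x^*(j)$ before the update; otherwise one of the two single-coordinate changes would increase its distance to $x^*$, forcing the net change to be at least $0$. Next I would run an induction over iterations maintaining the invariant that each coordinate of the current vector lies in the closed integer interval between $x_0(i)$ and $x^*(i)$. The base case $x=x_0$ is immediate. For the inductive step, consider the increased coordinate $i$: the invariant places $x(i)$ between $x_0(i)$ and $x^*(i)$, while the consequence above gives $x(i) < x^*(i)$; together these force $x^*(i) > x_0(i)$ and $x(i)+1 \le x^*(i)$, so the update is directed toward $x^*(i)$ and preserves the interval. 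Applying the same reasoning to the decreased coordinate $j$ via $x(j) > x^*(j)$ forces $x^*(j) < x_0(j)$ and keeps $x(j)-1 \ge x^*(j)$.

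In particular, a coordinate $i$ with $x^*(i) \le x_0(i)$ can never be the increased coordinate and one with $x^*(i) \ge x_0(i)$ can never be the decreased coordinate, so every change of $x(i)$ points from $x_0(i)$ toward $x^*(i)$; this is exactly the asserted monotonicity (a coordinate with $x^*(i)=x_0(i)$ being neither increased nor decreased, hence constant). The main obstacle is purely the first observation, namely that the exact iteration count of Theorem~\ref{thm:M_sd_bound} is tantamount to a decrease of exactly $2$ in $\|x - x^*\|_1$ at every single step; once this is in hand, the interval invariant and the coordinatewise conclusion follow by the routine induction above, with no further appeal to (M-EXC) or M-convexity.
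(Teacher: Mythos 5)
Your proposal is correct and follows essentially the same route as the paper, which proves the proposition in one sentence by noting that any violation of the monotonicity would force the iteration count to exceed the exact bound $(1/2)\|x^*-x_0\|_1$ of Theorem~\ref{thm:M_sd_bound}. Your version merely spells out that argument carefully: the parity/telescoping observation that every step must decrease $\|x-x^*\|_1$ by exactly $2$, followed by the interval invariant, is a faithful and complete formalization of the paper's contradiction.
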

\noindent
Indeed, if the statement does not hold,
then the number of iterations required by {\sc M-SD} must be strictly larger than
$(1/2)\|x^* - x_0\|_1$, a contradiction.
 By using this monotonicity, we can speed up the algorithms practically.
 
Proposition \ref{prop:M-sda-monotonicity} implies that
if some component $x(i)$ is increased (resp., decreased) in some iteration, then
it is never decreased (resp., increased) in the following iterations.
 Therefore, it possible to
restrict the choice of $i, j \in N$ in Step 1 of {\sc M-SD} as follows,
which may reduce the running time of the steepest descent algorithm.

\begin{flushleft}
 \textbf{Algorithm} {\sc M-SD$'$} 
\\
 \textbf{Step 0:} 
 Let $x_0 \in \dom f$ be an arbitrarily chosen initial vector.
 Set $x:=x_0$, $N_+:=N$, 
\\
\phantom{\textbf{Step 0:}}
and $N_-:=N$.
\\
 \textbf{Step 1:} 
 Let $i \in N_+$ and $j \in N_-$ be elements that minimize $f'(x;i, j)$.
\\
 \textbf{Step 2:} 
If $f'(x;i, j) \ge 0$ then output $x$ and stop.
\\
 \textbf{Step 3:} 
  Set $x := x + \chi_i - \chi_j$, $N_+:=N_+ \setminus\{j\}$, $N_-:=N_- \setminus\{i\}$,
and go to Step 1.
\end{flushleft}

 In a similar way, we can also modify the procedure {\sc M-IncSlope}$(x)$ as follows,
where a new index set $N^-$ is used in addition to $N^+$
and $N^-_i$ $(i \in N)$. 

 \begin{flushleft}
 \textbf{Procedure} {\sc M-IncSlope$'$}($x$) 
\\
 \textbf{Step 0:} 
Set $y := x$, $N^+:=N$, and $N^-:=N$.
\\
 \textbf{Step 1:} 
Take some $i \in N^+$, set $N^-_i := N^- \setminus\{i\}$, 
and do the following steps.
\\
\quad 
 \textbf{Step 1-1:} 
 Take some $j \in N^-_i$. If $f'(y;i,j) = \phi(x)$, then
set $y :=y+\bar{c}(y;i,j)(\chi_i - \chi_j)$,
\\
\quad\phantom{\textbf{Step 1-1:}}
$N^+ :=N^+ \setminus \{j\}$, and  $N^- :=N^- \setminus \{i\}$.
\\
\quad 
 \textbf{Step 1-2:} 
 Set $N^-_i := N^-_i \setminus\{j\}$.
 If $N^-_i = \emptyset$, then go to Step 2.
Otherwise, go to Step 1-1.
\\
 \textbf{Step 2:} 
 Set $N^+ := N^+ \setminus\{i\}$. 
 If $N^+ = \emptyset$, then output $y$ and stop.
Otherwise, go to Step 1.
\qed  
\end{flushleft}
\end{remark}


\section{Application to Constrained M-convex Function Minimization}
\label{sec:constM}

 The problem discussed in this section is formulated as follows: 
\begin{center}
Min$(f, R, k)$: \quad Minimize $f(x)$ \quad
subject to $x(R) = k$, $x \in \dom f$, 
\end{center}
\noindent
where $f: \Z^n \to \Rinf$ is an M-convex function with bounded $\dom f$,
$R$ is a non-empty subset of $N$, and $k \in \Z$. 
 It is known that this constrained  problem can be solved by a variant of
steepest descent algorithm \cite{Takazawa2023}, to which the idea of long step length
can be naturally applied as well.

The problem Min$(f, R, k)$ with $R=N$ is nothing but
an unconstrained minimization of $f$ since $x(N)$ is a constant for every $x \in \dom f$
(see Section \ref{sec:prelim}). 
 Hence, $R$ is assumed to be a proper subset of $N$
 (i.e., $\emptyset \ne R \subsetneq N$).
 In addition, feasibility of the problem  Min$(f, R, k)$ is assumed in the following;
Min$(f, R, k)$ is feasible
if and only if
$k$ satisfies $\underline{k} \le k \le \overline{k}$ with
$\underline{k} = \min\{x(R) \mid x \in \dom f\}$
and $\overline{k} = \max\{x(R) \mid x \in \dom f\}$
(cf.~\cite[Lemma 2]{Takazawa2023}).

\subsection{Review of Steepest Descent Algorithm}
\label{sec:constM-main}

 Given an optimal solution $x$ of Min$(f, R, k)$,
an optimal solution of Min$(f, R, k+1)$ can be obtained easily by using
a steepest descent direction at $x$.
 For every $k$ with $\underline{k} \le k \le \overline{k}$,
we denote by $M(k)$ and $z(k)$ the
set of optimal solutions and the optimal value for the problem Min$(f, R, k)$, i.e.,
\begin{align*}
& 
M(k) = \arg\min\{f(x) \mid x(R)=k,\ x \in \dom f\},
\\
& z(k) = \min\{f(x) \mid x(R)=k,\ x \in \dom f\}.
\end{align*}
 As in Section \ref{sec:Mconv},
we denote 
$f'(x;i, j) = f(x + \chi_i -\chi_j) - f(x)$
for $x \in \dom f$ and a direction $+ \chi_i -\chi_j$.
 We also define
\[
 \phi^R(x) = \min_{i \in R,\; j \in N\setminus R}f'(x;i, j) \qquad (x \in \dom f).
\]

\begin{prop}[{cf.~\cite[Lemma 3]{Takazawa2023}}]
\label{prop:constM-sdd-opt}
 Let $k$ be an integer with $\underline{k} \le k < \overline{k}$,
and $x \in M(k)$.
 Suppose that $i \in R$ and  $j \in N\setminus R$ 
minimize the value $f'(x;i, j)$, i.e., $f'(x;i, j) = \phi^R(x)$.
 Then, 
$x + \chi_i - \chi_j \in M(k+1)$ holds.
\end{prop}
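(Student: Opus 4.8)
The plan is to reduce the statement to a single slope inequality and then establish that inequality by induction on the $\ell_1$-distance, using (M-EXC). Write $x' = x + \chi_i - \chi_j$. Since $i \in R$ and $j \in N \setminus R$, we have $x'(R) = x(R) + 1 = k+1$, so $x'$ is feasible for $\mathrm{Min}(f,R,k+1)$, and $f(x') = f(x) + f'(x;i,j) = z(k) + \phi^R(x)$. Because $z(k+1) \le f(x')$ holds trivially, it suffices to prove the reverse estimate
\begin{align*}
f(y) \ge f(x) + \phi^R(x) \qquad \text{for every } y \in \dom f \text{ with } y(R) = k+1,
\end{align*}
since this forces $z(k+1) \ge f(x')$ and hence $x' \in M(k+1)$. (The same argument will also certify $\phi^R(x) < +\infty$, i.e. $x' \in \dom f$, which is where the hypothesis $k < \overline{k}$ enters.)

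I would prove this inequality by induction on $\|y - x\|_1$. Fix such a $y$. Since $x(N) = y(N)$ throughout $\dom f$ and $\sum_{\ell \in R}(y(\ell) - x(\ell)) = 1 > 0$, there is an index $\ell \in R \cap \suppp(y - x)$. Applying (M-EXC) to the pair $(y, x)$ with this $\ell$ produces some $m \in \suppm(y-x)$ (so $m \ne \ell$) with
\begin{align*}
f(y) + f(x) \ge f(y - \chi_\ell + \chi_m) + f(x + \chi_\ell - \chi_m).
\end{align*}
The argument then splits according to whether the exchange partner $m$ leaves $R$ or not.

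If $m \in N \setminus R$, then $y - \chi_\ell + \chi_m$ has $R$-value $k$ and is feasible for level $k$, so $f(y - \chi_\ell + \chi_m) \ge z(k) = f(x)$; simultaneously $+\chi_\ell - \chi_m$ is one of the admissible directions in the definition of $\phi^R(x)$, so $f(x + \chi_\ell - \chi_m) \ge f(x) + \phi^R(x)$. Substituting both bounds into the exchange inequality yields $f(y) \ge f(x) + \phi^R(x)$, with no appeal to the inductive hypothesis. If instead $m \in R$, then $y' := y - \chi_\ell + \chi_m$ still satisfies $y'(R) = k+1$ while $\|y' - x\|_1 = \|y - x\|_1 - 2$, so the inductive hypothesis gives $f(y') \ge f(x) + \phi^R(x)$; moreover $x + \chi_\ell - \chi_m$ now stays at level $k$, whence $f(x + \chi_\ell - \chi_m) \ge f(x)$. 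Combining these with the exchange inequality again gives $f(y) \ge f(x) + \phi^R(x)$, closing the induction; the base case $\|y - x\|_1 = 2$ forces $y = x + \chi_p - \chi_q$ with $p \in R,\ q \in N\setminus R$ and is a direct instance of the first case.

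The main obstacle, and really the crux of the whole argument, is choosing the right orientation and starting index for the exchange. One must apply (M-EXC) \emph{from $y$ toward $x$} (not the reverse) and start at an index $\ell \in R$, so that the two points produced by the axiom are exactly a level-$k$ feasible point, controlled by optimality of $x$, and the complementary point $x + \chi_\ell - \chi_m$, whose value is bounded below either by $\phi^R(x)$ (when $m \notin R$) or by $z(k) = f(x)$ (when $m \in R$). Getting this bookkeeping to line up so that both cases conclude with the same bound is the only delicate point; everything else is substitution into the single displayed exchange inequality.
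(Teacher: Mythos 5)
Your argument is correct. Note that the paper itself does not prove this proposition --- it is imported from Takazawa's Lemma~3 with only a citation --- so there is no in-paper proof to compare against; your write-up supplies a self-contained derivation. The reduction to the single inequality $f(y)\ge f(x)+\phi^R(x)$ for all $y\in\dom f$ with $y(R)=k+1$ is the right move, and the induction on $\|y-x\|_1$ is sound: choosing $\ell\in R\cap\suppp(y-x)$ (which exists because $y(R)>x(R)$) and applying (M-EXC) from $y$ toward $x$ yields an exchange partner $m$, and your case split on $m\in N\setminus R$ versus $m\in R$ correctly pairs each of the two resulting points with the bound that controls it ($z(k)=f(x)$ for the level-$k$ point, $\phi^R(x)$ or the inductive hypothesis for the other). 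The finiteness bookkeeping also works: the exchange inequality forces both right-hand points into $\dom f$, and the base case certifies $\phi^R(x)<+\infty$, hence $x+\chi_i-\chi_j\in\dom f$. This is the same exchange-based technique the paper uses in its proof of the convexity of $z(\cdot)$ (Proposition~\ref{prop:constM-z-convex}), where the split on whether the exchange partner lies in $R$ appears in identical form; your version replaces the minimal-distance-pair argument used there with an explicit induction on $\|y-x\|_1$, which is an equally valid way to close the loop.
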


 Repeated application of Proposition \ref{prop:constM-sdd-opt} implies that
an optimal solution of Min$(f, R, k)$
can be obtained by the following steepest descent algorithm \cite{Takazawa2023}.
 In the following, we assume that a vector in $M(\underline{k})$
is given in advance;
such a vector (i.e., an optimal solution of Min$(f, R, \underline{k})$) 
can be obtained by solving an unconstrained minimization of M-convex function 
$g(x) = f(x) - \Gamma \cdot x(R)$ $(x \in \Z^n)$
with a sufficiently large positive number $\Gamma$ (cf.~\cite{Takazawa2023}),
for which efficient algorithms are available.

\begin{flushleft}
 \textbf{Algorithm} {\sc ConstM-SD} 
\\
 \textbf{Step 0:} 
 Let $x_{\underline{k}} \in M(\underline{k})$ and 
set $x:=x_{\underline{k}}$.
 \\
 \textbf{Step 1:} 
If $x(R) = k$ then output $x$ and stop.
\\
 \textbf{Step 2:} 
 Let $i \in R$ and  $j \in N\setminus R$ be elements 
minimizing the value $f'(x;i, j)$.
\\
 \textbf{Step 3:} 
  Set $x := x + \chi_i - \chi_j$ and go to Step 1.
\end{flushleft}

  The algorithm outputs an optimal solution after
$k - \underline{k}$ iterations,
and each iteration requires $\Oh(|R|(n-|R|))$ time.
 Hence, we obtain the following result.

\begin{theorem}[{cf.~\cite[Theorem 5]{Takazawa2023}}]
Algorithm {\sc ConstM-SD} outputs 
an optimal solution  of {\rm Min$(f, R, k)$}
in $\Oh(|R|(n-|R|)(k - \underline{k}))$ time.
The running time reduces to
$\Oh(n(k - \underline{k}))$ 
if either $|R|$ or $|N \setminus R|$ is bounded by a constant.
\end{theorem}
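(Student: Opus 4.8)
The plan is to establish correctness first and then bound the total work as (number of iterations) $\times$ (cost per iteration). Correctness follows by induction directly from Proposition~\ref{prop:constM-sdd-opt}. I would maintain the invariant that the current vector $x$ lies in $M(x(R))$. Initially $x = x_{\underline{k}} \in M(\underline{k})$ by the choice in Step~0. If $x \in M(m)$ with $\underline{k} \le m < k$ at the start of an iteration, then Step~2 selects $i \in R$ and $j \in N \setminus R$ with $f'(x;i,j) = \phi^R(x)$, so Proposition~\ref{prop:constM-sdd-opt} guarantees $x + \chi_i - \chi_j \in M(m+1)$, preserving the invariant. The loop halts at Step~1 precisely when $x(R) = k$, at which point $x \in M(k)$ is an optimal solution of Min$(f, R, k)$.

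Next I would count the iterations. The key observation is that each execution of Step~3 increases $x(R)$ by exactly one: since $i \in R$ and $j \in N \setminus R$, adding $\chi_i$ raises one coordinate indexed in $R$ by one, while subtracting $\chi_j$ lowers a coordinate indexed outside $R$, so $x(R)$ changes by $+1$. As $x(R) = \underline{k}$ at the start and the loop terminates as soon as $x(R) = k$, the algorithm performs exactly $k - \underline{k}$ iterations.

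For the cost per iteration, the dominant step is Step~2, which scans all pairs $(i,j)$ with $i \in R$ and $j \in N \setminus R$. There are $|R| \cdot |N \setminus R| = |R|(n - |R|)$ such pairs, and by the standing assumption that $f$ can be evaluated in constant time, each slope $f'(x;i,j)$ is computed in $\Oh(1)$ time; Steps~1 and~3 cost $\Oh(1)$ as well. Hence each iteration runs in $\Oh(|R|(n - |R|))$ time, and multiplying by the iteration count yields the claimed bound $\Oh(|R|(n - |R|)(k - \underline{k}))$. For the refined bound, I would simply note that $|R|(n - |R|) = \Oh(n)$ whenever $|R| = \Oh(1)$ or $|N \setminus R| = n - |R| = \Oh(1)$, since in either case one factor of the product $|R|(n-|R|)$ is bounded by a constant and the other by $n$.

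I do not expect a genuine obstacle here: the heart of the argument---that a single steepest-descent step advances optimally from $M(m)$ to $M(m+1)$---is already packaged in Proposition~\ref{prop:constM-sdd-opt}, so the remaining work is the routine bookkeeping of the iteration count and the per-iteration cost. The only point requiring a moment of care is verifying that $x(R)$ increases by exactly one in each step; this holds because exactly one of the two coordinates modified by $+\chi_i - \chi_j$ belongs to $R$, which is precisely what makes the iteration count equal to $k - \underline{k}$ rather than half of an $\ell_1$-distance as in the unconstrained analysis of Theorem~\ref{thm:M_sd_bound}.
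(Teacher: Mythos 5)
Your proposal is correct and follows essentially the same route as the paper: repeated application of Proposition~\ref{prop:constM-sdd-opt} gives the invariant $x \in M(x(R))$, the iteration count is $k - \underline{k}$ since $x(R)$ increases by one per step, and each iteration costs $\Oh(|R|(n-|R|))$ for the scan over pairs $(i,j) \in R \times (N\setminus R)$. The paper states this more tersely, but the argument is identical.
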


\subsection{Use of Long Step Length}

 We then propose a long-step version of the algorithm {\sc ConstM-SD}.
  For this purpose, we show a monotonicity property of
steepest descent directions similar to Proposition~\ref{prop:M-mj-sdd-mono}.

\begin{prop}
\label{prop:constM-sdd-mono}
 Let $k$ be an integer with $\underline{k} \le k \le \overline{k}-2$ and
$x \in M(k)$.
 Also, let $i \in R$ and  $j \in N\setminus R$ be elements 
minimizing $f'(x;i, j)$.
 If $f'(x + \chi_i - \chi_j;i,j) =f'(x;i, j)$, then we have
\begin{align*}
&  f'(x + \chi_i - \chi_j;i,j) \le f'(x + \chi_i - \chi_j;h,\ell)\qquad  
(h \in R,\ \ell \in N\setminus R),\\
& x + 2 \chi_i - 2 \chi_j \in M(k+2).
\end{align*}
\end{prop}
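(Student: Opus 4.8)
Write $y=x+\chi_i-\chi_j$. Since $i\in R$ and $j\in N\setminus R$ attain $f'(x;i,j)=\phi^R(x)$ and $x\in M(k)$, Proposition~\ref{prop:constM-sdd-opt} gives $y\in M(k+1)$. The plan is to reduce both assertions of the proposition to the single monotonicity bound
\[
 f'(y;h,\ell)\ge\phi^R(x)\qquad(h\in R,\ \ell\in N\setminus R).
\]
Granting this bound, the first inequality is immediate, because the hypothesis $f'(y;i,j)=f'(x;i,j)=\phi^R(x)$ turns it into $f'(y;i,j)\le f'(y;h,\ell)$. The bound also shows that $+\chi_i-\chi_j$ attains $\phi^R(y)$, hence is a steepest descent direction at $y$; and since $k\le\overline{k}-2$ ensures $k+1<\overline{k}$, a second application of Proposition~\ref{prop:constM-sdd-opt} to $y\in M(k+1)$ yields $y+\chi_i-\chi_j=x+2\chi_i-2\chi_j\in M(k+2)$, the second assertion.

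It remains to prove the monotonicity bound, which is the heart of the matter. Fix $h\in R$, $\ell\in N\setminus R$ and set $\tilde{y}=y+\chi_h-\chi_\ell$; if $\tilde{y}\notin\dom f$ the bound is trivial, so assume $\tilde{y}\in\dom f$. Using $\phi^R(x)=f(y)-f(x)$, the desired bound $f'(y;h,\ell)\ge\phi^R(x)$ is equivalent to $f(x)+f(\tilde{y})\ge2f(y)$, and I would derive this from (M-EXC) applied to the pair $x,\tilde{y}$. A direct coordinate count gives $x(\ell)-\tilde{y}(\ell)\ge1$, so $\ell\in\suppp(x-\tilde{y})$; taking $\ell$ as the distinguished index, (M-EXC) returns some $q\in\suppm(x-\tilde{y})\subseteq\{i,h\}$ with
\[
 f(x)+f(\tilde{y})\ge f(x-\chi_\ell+\chi_q)+f(\tilde{y}+\chi_\ell-\chi_q).
\]
If $q=i$ the two points on the right are $x+\chi_i-\chi_\ell$ and $x+\chi_h-\chi_j$; if $q=h$ they are $x+\chi_h-\chi_\ell$ and $y$. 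In every case each point is either $y$ itself or of the form $x+\chi_a-\chi_b$ with $a\in R$ and $b\in N\setminus R$, so its value is at least $f(x)+\phi^R(x)=f(y)$ by the definition of $\phi^R$. Summing the two lower bounds gives $f(x)+f(\tilde{y})\ge2f(y)$, as required.

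The main obstacle is arranging the exchange so that the uniform choice of $\ell$ as the distinguished index works in every configuration. Its payoff is that both outcomes of (M-EXC) split into terms of the two privileged shapes above; the only possible coincidences are $i=h$ or $j=\ell$ (disjointness of $R$ and $N\setminus R$ rules out $i=\ell$ and $h=j$), and these merely force or restrict the returned index $q$ without changing the two shapes. The points I would verify most carefully are the coordinate arithmetic identifying $\tilde{y}+\chi_\ell-\chi_q$ as $x+\chi_h-\chi_j$ or as $y$, and the domain bookkeeping: if any leftover point lies outside $\dom f$ its value is $+\infty$, which only strengthens the bound, so no feasibility beyond $\tilde{y}\in\dom f$ is needed.
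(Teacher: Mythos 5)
Your proof is correct, but it takes a genuinely different route from the paper's. The paper first establishes that the optimal-value function $z(k)$ is convex in $k$ (Proposition~\ref{prop:constM-z-convex}), via a global argument that picks minimizers $x \in M(k-1)$ and $\hat{x} \in M(k+1)$ at minimum $\ell_1$-distance and derives a contradiction from (M-EXC) if the exchange index lands in $R$; Proposition~\ref{prop:constM-sdd-mono} then follows from the chain $f'(y;h,\ell) \ge z(k+2)-z(k+1) \ge z(k+1)-z(k) = f'(y;i,j)$. You instead prove the key bound $f'(y;h,\ell) \ge \phi^R(x)$ locally, by a single application of (M-EXC) to the pair $x$, $\tilde{y}=x+\chi_i-\chi_j+\chi_h-\chi_\ell$ with distinguished index $\ell$; your choice of $\ell \in N\setminus R$ is what makes both possible outcomes of the exchange decompose into terms of the form $f(x+\chi_a-\chi_b)$ with $a\in R$, $b\in N\setminus R$ (or $f(y)$ itself), each bounded below by $f(x)+\phi^R(x)=f(y)$, and your case bookkeeping (the only coincidences being $i=h$ or $j=\ell$, plus the $+\infty$ fallback for infeasible points) checks out. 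Your argument is shorter and self-contained --- it is essentially the constrained analogue of the paper's own proof of Proposition~\ref{prop:M-mj-sdd-mono} --- whereas the paper's detour through Proposition~\ref{prop:constM-z-convex} buys the global convexity of $z$, which the paper needs anyway for the $\zeta(k)-\zeta(\underline{k})$ iteration bound in Theorem~\ref{thm:constM-runtime}.
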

\noindent
 Proof of this proposition 
is given in Section \ref{sec:constM-proofs}
in Appendix.

A long-step version of the algorithm {\sc ConstM-SD},
denoted as {\sc ConstM-LSD},
is obtained by replacing Step 3  in {\sc ConstM-SD} with the following,
where $\bar{c}(x;i, j)$ is given by \eqref{eqn:def:M-bar_c}.

\begin{flushleft}
  \textbf{Step 3:} 
  Set $\lambda: = \min\{k-x(R), \bar{c}(x;i, j)\}$,
$x := x + \lambda(\chi_i - \chi_j)$,
 and go to Step 1.
\end{flushleft}
\noindent
 Proposition \ref{prop:constM-sdd-mono} guarantees that
this modified algorithm can also find an optimal solution of {\rm Min$(f, R, k)$}.

\begin{theorem}
Algorithm {\sc ConstM-LSD} outputs 
an optimal solution  of {\rm Min$(f, R, k)$}.
\end{theorem}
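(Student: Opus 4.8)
The plan is to show that the algorithm maintains the invariant that the current vector is optimal for its own constraint value, namely that $x \in M(x(R))$ holds at the beginning of every execution of Step~1. Granting this invariant, correctness is immediate: the algorithm stops only when $x(R) = k$, at which point $x \in M(k)$ is an optimal solution of Min$(f, R, k)$. Termination also follows, since at each iteration $x(R)$ increases by $\lambda = \min\{k - x(R), \bar{c}(x;i,j)\} \ge 1$ while staying bounded above by $k$; here $\lambda \ge 1$ because $k - x(R) \ge 1$ whenever Step~1 does not stop, and $\bar{c}(x;i,j) \ge 1$ since $x + \chi_i - \chi_j \in M(x(R)+1) \subseteq \dom f$ by Proposition~\ref{prop:constM-sdd-opt}.

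First I would establish the invariant by induction on the iteration count. The base case holds because $x = x_{\underline k} \in M(\underline k)$. For the inductive step, suppose $x \in M(k')$ with $k' := x(R) < k$ at the start of an iteration, and let $i \in R$, $j \in N \setminus R$ be the chosen minimizers of $f'(x; \cdot, \cdot)$. Writing $x^{(t)} = x + t(\chi_i - \chi_j)$ for $t = 0, 1, \ldots, \lambda$, it then suffices to prove that $x^{(\lambda)} \in M(k' + \lambda)$, since this re-establishes the invariant at the start of the next iteration.

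The heart of the argument is an inner induction on $t$ showing that, for $t = 0, 1, \ldots, \lambda$, we have $x^{(t)} \in M(k'+t)$, and moreover that $+\chi_i - \chi_j$ is a steepest descent direction at $x^{(t)}$ (i.e.\ $f'(x^{(t)}; i, j) = \phi^R(x^{(t)})$) whenever $t < \lambda$. Two ingredients feed this induction. The first is a slope-constancy fact coming from the choice $\lambda \le \bar{c}(x;i,j)$: since $t \mapsto f(x^{(t)})$ is convex, the definition \eqref{eqn:def:M-bar_c} forces it to be affine on $\{0, 1, \ldots, \bar{c}(x;i,j)\}$, so all consecutive increments equal $f'(x;i,j)$, giving $f'(x^{(t)}; i, j) = f'(x; i, j)$ for every $t < \lambda$. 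The second is that the two propositions of this subsection propagate the two properties one step at a time: given $x^{(t)} \in M(k'+t)$ with $+\chi_i - \chi_j$ steepest descent there, Proposition~\ref{prop:constM-sdd-opt} yields $x^{(t+1)} \in M(k'+t+1)$, while Proposition~\ref{prop:constM-sdd-mono}, whose hypothesis $f'(x^{(t)} + \chi_i - \chi_j; i, j) = f'(x^{(t)}; i, j)$ is exactly the slope-constancy fact, yields that $+\chi_i - \chi_j$ remains a steepest descent direction at $x^{(t+1)}$.

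The point requiring care, and the step I expect to be the chief obstacle, is the boundary bookkeeping on the constraint value needed to invoke Proposition~\ref{prop:constM-sdd-mono}, which requires the base vector to lie in $M(k'')$ with $k'' \le \overline{k} - 2$. I would verify that this bound is never violated within the range where it is actually used: the direction-preservation step is needed only to pass from $x^{(t)}$ to $x^{(t+1)}$ for $t \le \lambda - 2$, where the constraint value is $k' + t \le k' + \lambda - 2 \le k - 2 \le \overline{k} - 2$ by feasibility ($k' + \lambda \le k \le \overline{k}$). The final unit step $x^{(\lambda-1)} \to x^{(\lambda)}$ uses only Proposition~\ref{prop:constM-sdd-opt}, whose milder requirement $k' + \lambda - 1 < \overline{k}$ follows from $k \le \overline{k}$. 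Chaining these conclusions along $t = 0, 1, \ldots, \lambda$ gives $x^{(\lambda)} \in M(k'+\lambda)$, which completes both the inner and the outer induction.
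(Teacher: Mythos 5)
Your proof is correct and follows exactly the route the paper intends: the paper omits a written proof and simply asserts that Proposition~\ref{prop:constM-sdd-mono} (together with Proposition~\ref{prop:constM-sdd-opt}) guarantees correctness, and your invariant $x \in M(x(R))$ with the inner induction along $t$, the convexity-based slope-constancy up to $\bar{c}(x;i,j)$, and the bookkeeping showing the $\overline{k}-2$ hypothesis is only needed for $t \le \lambda-2$ is precisely the missing elaboration. No gaps.
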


 Although the number of iterations in the algorithm {\sc ConstM-LSD} 
can be the same as {\sc ConstM-SD} in the worst case,
it is expected that  {\sc ConstM-LSD} runs faster in practice.
 To obtain a better theoretical time bound,
we use algorithm {\sc ConstM-LSD2} and procedure 
{\sc ConstM-IncSlope}($x$), 
both of which are similar to (but different from) 
{\sc M-LSD2} and {\sc M-IncSlope}($x$)  
used for unconstrained M-convex function minimization.

\begin{flushleft}
 \textbf{Algorithm} {\sc ConstM-LSD2} 
\\
 \textbf{Step 0:} 
 Let $x_{\underline{k}} \in M(\underline{k})$ and 
set $x:=x_{\underline{k}}$.
\\
 \textbf{Step 1:} 
 If $x(R) = k$ then output $x$ and stop.
\\
 \textbf{Step 2:} 
 Let $x'$ be the output of the procedure {\sc ConstM-IncSlope}($x$). 
\\
\phantom{\textbf{Step 2:}}
 Set $x:=x'$ and go to Step 1.
\end{flushleft}

Input of the procedure {\sc ConstM-IncSlope}($x$) is
an optimal solution $x \in \dom f$ 
of the problem {\rm Min$(f, R, \hat{k})$} for some $\hat{k} < k$.
 The procedure initially sets the vector $y$ to $x$, 
then repeatedly updates $y$ by using $i \in R$ and $j \in N \setminus R$
with $f'(y;i,j) = \phi^R(x)$,
and finally obtain $y$ satisfying $\phi^R(y) > \phi^R(x)$
or $y(R)=k$ (or both).
 
 The outer iteration of the procedure consists of Steps 1 and 2, 
and there is an inner iteration in Step 1. 
 In Step 1 of each outer iteration,
we select any $i \in R$ first.
 Then, for each $j \in N \setminus R$
we check if the direction $+ \chi_i - \chi_j$ has the slope equal to $\phi^R(x)$;
if it is true, $y$ is updated to
the vector $y+\lambda(\chi_i - \chi_j)$ with $\lambda := \min\{k - y(R), \bar{c}(y;i,j)\}$.

\begin{flushleft}
 \textbf{Procedure} {\sc ConstM-IncSlope}($x$) 
\\
 \textbf{Step 0:} 
Set $y := x$ and $N^+:=R$.
\\
 \textbf{Step 1:} 
Take any $i \in N^+$, set $N^-_i := N\setminus R$, 
and do the following steps.
\\
\quad 
 \textbf{Step 1-1:} 
 Take any $j \in N^-_i$.
 If $f'(y;i,j) = \phi^R(x)$, set 
$\lambda := \min\{k - y(R), \bar{c}(y;i,j)\}$ and 
\\
\quad 
\phantom{\textbf{Step 1-1:}}
$y:= y+\lambda(\chi_i - \chi_j)$.
\\
\quad 
 \textbf{Step 1-2:} 
 Set $N^-_i := N^-_i \setminus\{j\}$.
 If $N^-_i = \emptyset$, then go to Step 2.
Otherwise, go to Step 1-1.
\\
 \textbf{Step 2:} 
 Set $N^+ := N^+ \setminus\{i\}$. 
 If $N^+ = \emptyset$, then output $y$ and stop.
Otherwise, go to Step 1.
\end{flushleft}


 Note that the vector $y$ at the end of Step 1-1 satisfies 
$f'(y;i,j) > \phi^R(x)$ if $y(R) < k$,
regardless of whether or not $y$ is updated in this step.
Using this inequality we can obtain 
the following property of {\sc ConstM-IncSlope}$(x)$,
which is similar to Theorem~\ref{thm:M-inc_min_slope};
proof is also similar and given in Section \ref{sec:proofs-constM}
in Appendix.

\begin{theorem}
\label{thm:constM-inc_min_slope}
 For a vector $x \in M(\hat{k})$ with an integer $\hat{k} < k$,
the output $x'$ of {\sc ConstM-IncSlope}$(x)$
satisfies $\phi^R(x')  > \phi^R(x)$, provided that $x'(R) < k$.
\end{theorem}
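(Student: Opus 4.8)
The plan is to follow the proof of Theorem~\ref{thm:M-inc_min_slope} line by line, adapting its two auxiliary lemmas to the constrained setting in which the only admissible update directions are $+\chi_i - \chi_j$ with $i \in R$ and $j \in N \setminus R$, and step lengths are capped at $k - y(R)$. The target is to show that at termination $f'(x';i,j) > \phi^R(x)$ holds for every $i \in R$ and $j \in N \setminus R$, which is exactly $\phi^R(x') > \phi^R(x)$. The first thing I would isolate is the role of the hypothesis $x'(R) < k$. Each update in Step~1-1 increases $y(R)$ by $\lambda = \min\{k - y(R), \bar{c}(y;i,j)\}$, since $i \in R$ and $j \in N \setminus R$ give $(\chi_i - \chi_j)(R) = 1$. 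As $y(R)$ is non-decreasing and ends at $x'(R) < k$, the cap $k - y(R)$ is never binding, so every update uses the full step length $\bar{c}(y;i,j)$. By the definition of $\bar{c}$, the vector $y$ at the end of each Step~1-1 then satisfies $f'(y;i,j) > \phi^R(x)$ (the observation recorded just before the theorem), which is the constrained counterpart of the starting inequality in the unconstrained argument.

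Next I would prove the constrained analogue of Lemma~\ref{lem:M-slope_inc_1-2}: for a fixed $i \in R$, once $f'(y;i,j) > \phi^R(x)$ holds for some $j \in N \setminus R$, it is preserved when $y$ is moved by $+\chi_i - \chi_{j'}$ for another $j' \in N \setminus R$. The argument is verbatim the unconstrained one, applying (M-EXC) to $y$ and $\tilde{y} = y + 2\chi_i - \chi_j - \chi_{j'}$ with the forced exchange index $i \in \suppm(y - \tilde{y})$; all indices involved remain in $\{i\} \cup (N \setminus R)$, so the partition is respected automatically. Iterating over $j' \in N \setminus R$ shows that the vector $y$ at the end of Step~1 satisfies $f'(y;i,j) > \phi^R(x)$ for all $j \in N \setminus R$.

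Then I would prove the constrained analogue of Lemma~\ref{lem:M-slope_inc_2}: once $f'(y;i,j) > \phi^R(x)$ holds for all $j \in N \setminus R$ (for a fixed $i \in R$), this is preserved under a later update by a steepest descent direction $+\chi_h - \chi_\ell$ with $h \in R$ and $\ell \in N \setminus R$. Here the index restrictions genuinely simplify matters: the degenerate case of the unconstrained proof, namely $i = \ell$ or $j^* = h$, cannot occur, because $i, h \in R$ while $\ell, j^* \in N \setminus R$ are disjoint. Hence only the generic case remains, in which one applies (M-EXC) to $y$ and $\tilde{y} = y + \chi_h - \chi_\ell + \chi_i - \chi_{j^*}$ with $\ell \in \suppp(y - \tilde{y})$, uses the $\phi^R$-minimality of $+\chi_h - \chi_\ell$ to bound the exchange, and invokes $f'(y;i,j) > \phi^R(x)$ for $j \in \{j^*, \ell\} \subseteq N \setminus R$. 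Combining the two lemmas, repeated use of the first establishes $f'(y;i,j) > \phi^R(x)$ for all $j$ at the end of the $i$-loop, and repeated use of the second preserves these inequalities through every subsequent outer iteration, so the output satisfies $f'(x';i,j) > \phi^R(x)$ for all $i \in R$, $j \in N \setminus R$, i.e.\ $\phi^R(x') > \phi^R(x)$.

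The main obstacle I expect is the bookkeeping around long steps combined with the cap, rather than the exchange inequalities themselves. Concretely, each long step of length $\bar{c}$ must be decomposed into unit steps, and one has to check that the intermediate lattice points lie in $\dom f$ and keep $\phi^R$ equal to $\phi^R(x)$ along the flat segment (using the monotonicity of Proposition~\ref{prop:constM-sdd-mono}), so that the unit-step lemmas apply at every intermediate point; and one must confirm that the hypothesis $x'(R) < k$ is precisely what prevents the cap from truncating a step before the slope in that direction strictly exceeds $\phi^R(x)$. By contrast, verifying that the (M-EXC) exchanges respect the partition $R \mid N \setminus R$ is routine once the index roles are fixed, and the vanishing of the degenerate case is a clean simplification relative to the unconstrained proof.
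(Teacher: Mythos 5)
Your proposal follows essentially the same route as the paper's proof: two lemmas adapted verbatim from Lemmas~\ref{lem:M-slope_inc_1-2} and~\ref{lem:M-slope_inc_2} (the paper's Lemmas~\ref{lem:constM-slope_inc_1} and~\ref{lem:constM-slope_inc_2}), with the same observations that the hypothesis $x'(R) < k$ keeps the cap from binding and that the degenerate case $i = \ell$ or $j^* = h$ disappears because $R$ and $N \setminus R$ are disjoint. The argument is correct as outlined.
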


 By using Theorem \ref{thm:constM-inc_min_slope},
we can analyze the running time of the algorithm
{\sc ConstM-LSD2} in a similar way as in Section \ref{sec:Mconv}.
 The procedure {\sc ConstM-IncSlope}$(x)$ runs in
$\Oh(|R|(n-|R|) \log L_\infty)$ time with $L_\infty$ given by
\eqref{eqn:diameter}.
 By Theorem~\ref{thm:constM-inc_min_slope} 
and the equation $\phi^R(x) = z(k+1) - z(k)$ for $x \in M(k)$
(see Proposition \ref{prop:constM-sdd-opt}),
 the number of iterations required by the algorithm {\sc ConstM-LSD2}
is bounded by $\zeta(k)  -\zeta(\underline{k})$ with $\zeta(h) \equiv z(h+1) -z(h)$. 
 Also, the number of iterations is bounded by $k - \underline{k}$.
 Hence, we obtain the following result.

\begin{theorem}
\label{thm:constM-runtime}
 Algorithm {\sc ConstM-LSD2} outputs an optimal solution of
{\rm Min$(f, R, {k})$} in
$\Oh\big(|R|(n-|R|) (\log L_\infty)\min\big\{\zeta(k)  -\zeta(\underline{k}), k -
 \underline{k}\big\}\big)$ time.
The running time reduces to
$\Oh\big(n (\log L_\infty)\min\big\{\zeta(k)  -\zeta(\underline{k}), k -
 \underline{k}\big\}\big)$
if either $|R|$ or $|N \setminus R|$ is bounded by a constant.
\end{theorem}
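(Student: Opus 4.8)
The plan is to mirror the running-time analysis already carried out for {\sc M-LSD2} in Section \ref{sec:Mconv}, assembling three ingredients: a per-invocation cost bound for the procedure {\sc ConstM-IncSlope}$(x)$, a bound on the number of outer iterations of {\sc ConstM-LSD2}, and the translation of the slope quantity $\phi^R$ into the second-difference function $\zeta$. I would present the argument as a direct verification that the product of the first two ingredients gives the stated bound.

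First I would bound the cost of a single call to {\sc ConstM-IncSlope}$(x)$. Inspecting the procedure, the outer index $i$ ranges over $N^+ = R$ and, for each such $i$, the inner index $j$ ranges over $N \setminus R$; thus each ordered pair $(i,j)$ with $i \in R$, $j \in N \setminus R$ is examined at most once. For each pair we must evaluate $f'(y;i,j)$ (constant time under the oracle assumption) and, when the slope matches $\phi^R(x)$, compute the step length $\bar{c}(y;i,j)$ by binary search, which costs $\Oh(\log L_\infty)$ with $L_\infty$ given by \eqref{eqn:diameter}; the capping $\lambda := \min\{k - y(R), \bar c(y;i,j)\}$ is an $\Oh(1)$ overhead. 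Summing over all $|R|\cdot(n-|R|)$ pairs gives the per-invocation bound $\Oh(|R|(n-|R|)\log L_\infty)$.

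Next I would bound the number of outer iterations of {\sc ConstM-LSD2}. Here the key input is Theorem \ref{thm:constM-inc_min_slope}: each invocation of {\sc ConstM-IncSlope}$(x)$ either reaches $y(R)=k$ (in which case the algorithm terminates at the next Step 1) or returns $x'$ with $\phi^R(x') > \phi^R(x)$ and $x'(R) < k$. Using the identity $\phi^R(x) = z(x(R)+1) - z(x(R)) = \zeta(x(R))$ for $x \in M(x(R))$ from Proposition \ref{prop:constM-sdd-opt}, each non-terminal invocation strictly increases the integer quantity $\zeta(\cdot)$ evaluated at the current value of $x(R)$; since this quantity starts at $\zeta(\underline{k})$ and the current optimal value of $\zeta$ cannot exceed $\zeta(k)$ before termination, the number of invocations is bounded by $\zeta(k) - \zeta(\underline{k})$. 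A separate, crude bound comes from the fact that $x(R)$ is non-decreasing and strictly increases in at least one step of each non-terminal invocation (the first step with $y(R)<k$ strictly raises $y(R)$), so the number of outer iterations is also at most $k - \underline{k}$; taking the minimum yields the iteration count $\min\{\zeta(k)-\zeta(\underline{k}),\, k-\underline{k}\}$. Multiplying this by the per-invocation cost gives the first displayed bound, and specializing $|R|$ or $|N\setminus R|$ to a constant collapses $|R|(n-|R|)$ to $\Oh(n)$, giving the second.

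The main obstacle I anticipate is making the outer-iteration count fully rigorous, specifically justifying that the integrality and strict monotonicity of $\phi^R$ across invocations really bounds the number of invocations by $\zeta(k)-\zeta(\underline{k})$ rather than by something larger. This requires confirming that the current vector $x$ remains in $M(x(R))$ after each invocation (so that the identity $\phi^R(x)=\zeta(x(R))$ applies at every step), which should follow from Proposition \ref{prop:constM-sdd-mono} and Proposition \ref{prop:constM-sdd-opt} together with the correctness statement that {\sc ConstM-LSD2} maintains optimality; and it requires checking the edge case where an invocation both increases the slope and advances $x(R)$, to ensure the two bounds are not being double-counted. Everything else is the same bookkeeping as in the unconstrained case, so I expect the write-up to be short once this monotonicity-plus-integrality step is stated carefully.
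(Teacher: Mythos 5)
Your proposal is correct and follows essentially the same route as the paper: the per-invocation cost $\Oh(|R|(n-|R|)\log L_\infty)$ from examining each pair $(i,j)\in R\times(N\setminus R)$ once with a binary search for $\bar{c}$, the iteration bound $\zeta(k)-\zeta(\underline{k})$ from Theorem \ref{thm:constM-inc_min_slope} combined with the identity $\phi^R(x)=z(h+1)-z(h)$ for $x\in M(h)$, and the separate bound $k-\underline{k}$. The cautionary points you raise (that the current vector stays in $M(x(R))$, via Propositions \ref{prop:constM-sdd-opt} and \ref{prop:constM-sdd-mono}, and the implicit integrality needed for the $\zeta$ count) are exactly the details the paper leaves implicit, so there is no gap relative to the paper's own argument.
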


\subsection{Constrained \Mnat-convex Function Minimization}
\label{sec:connect-const-Mnat}

The results for Min$(f,R,k)$ 
obtained in the previous subsection can be
extended to the constrained problem \Mnat-Min$(f,R,k)$
with an \Mnat-convex objective function $f$.
 In fact,  \Mnat-Min$(f,R,k)$ and  Min$(f,R,k)$
are essentially equivalent, as shown below,
and the results for Min$(f,R,k)$ can be easily translated 
in terms of \Mnat-Min$(f,R,k)$.

 Given an instance of  problem \Mnat-Min$(f,R,k)$,
let  $\tilde{f}: \Z^{\tilde{N}} \to \Rinf$ be the M-convex function
given by \eqref{eqn:tilde_f} with $\widetilde{N} = N \cup \{0\}$.
 By the definition of $\tilde{f}$, there exists a natural one-to-one
correspondence between vectors in $\dom f\ (\subseteq \Z^n)$ and 
those in $\dom \tilde{f}\ (\subseteq \Z^{\tilde{N}})$;
a vector $x \in \dom f$ corresponds to the vector 
$\tilde{x}\equiv   (x, -x(N)) \in \dom \tilde{f}$.
 Moreover, $x \in \dom f$ satisfies $x(R)=k$ 
if and only if $\tilde{x}$ satisfies $\tilde{x}(R)=k$
since $R\subseteq N$ and $0 \notin R$.
 Hence, $x \in \dom f$ is an optimal solution of \Mnat-Min$(f,R,k)$
if and only if 
$\tilde{x} \in \dom \tilde{f}$
is an optimal solution of Min$(\tilde{f},R,k)$.
 This observation shows that \Mnat-Min$(f,R,k)$
is equivalent to the constrained problem Min$(\tilde{f},R,k)$.

 Based on the relationship between the two problems,
algorithms for Min$(f,R,k)$  can be translated
in terms of \Mnat-Min$(f,R,k)$.
  The values $f'(x;i, j)$ and $\bar{c}(x;i, j)$ are defined
for $i,j \in N$ as in Section \ref{sec:constM-main}; 
in the case with $i \in N$ and $j = 0$, define
\begin{align*}
f'(x;i, 0) &= f(x + \chi_i) - f(x), \\
\bar{c}(x;i, 0) & = \max\{\lambda \in \Z_+ \mid 
 f(x + \lambda\chi_i)- f(x) = \lambda f'(x;i, 0)   \}.
\end{align*}
 Then, algorithm {\sc Const\Mnat-LSD}  for \Mnat-Min$(f,R,k)$
can be obtained from {\sc ConstM-LSD}  for Min$(f,R,k)$
as follows; the only difference is in Step 2, where 
the set $N\setminus R$ is replaced with $(N\setminus R)\cup\{0\}$.
 Note that
$\chi_0 = \0$ and $\underline{k} = \min\{x(R) \mid x \in \dom f\}$.

\begin{flushleft}
 \textbf{Algorithm} {\sc Const\Mnat-LSD} 
\\
 \textbf{Step 0:} 
 Let $x_{\underline{k}} \in M(\underline{k})$ and 
set $x:=x_{\underline{k}}$.
 \\
 \textbf{Step 1:} 
If $x(R) = k$ then output $x$ and stop.
\\
 \textbf{Step 2:} 
 Let $i \in R$ and  $j \in (N\setminus R)\cup\{0\}$ be elements 
minimizing the value $f'(x;i, j)$.
\\
 \textbf{Step 3:} 
  Set $\lambda: = \min\{k-x(R), \bar{c}(x;i, j)\}$,
$x := x + \lambda(\chi_i - \chi_j)$,
 and go to Step 1.
\end{flushleft}
  
\begin{theorem}
Algorithm {\sc Const\Mnat-LSD} outputs 
an optimal solution  of  {\rm \Mnat-Min$(f, R, k)$}
in $\Oh(|R|(n-|R|)(k - \underline{k}))$ time.
The running time reduces to $\Oh(n(k - \underline{k}))$ 
if either $|R|$ or $|N \setminus R|$ is bounded by a constant.
\end{theorem}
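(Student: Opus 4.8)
The plan is to deduce both correctness and the time bound from the equivalence between \Mnat-Min$(f,R,k)$ and Min$(\tilde{f},R,k)$ established above, where $\tilde{f}$ is the M-convex function on $\tilde{N} = N \cup \{0\}$ given by \eqref{eqn:tilde_f}. First I would check that a single iteration of {\sc Const\Mnat-LSD} on $f$ is, under the correspondence $x \leftrightarrow \tilde{x} = (x, -x(N))$, exactly a single iteration of {\sc ConstM-LSD} on $\tilde{f}$ with the same $R$ and $k$ and ground set $\tilde{N}$. The key point is that moving $x$ by $\lambda(\chi_i - \chi_j)$ with $i \in R$ and $j \in (N \setminus R) \cup \{0\}$ corresponds to moving $\tilde{x}$ by $\lambda(\chi_i - \chi_j)$ in $\Z^{\tilde{N}}$, where $\chi_0$ is now a genuine unit vector: for $j \in N \setminus R$ the $0$-th coordinate is unchanged because $x(N)$ is preserved, while for $j = 0$ one has $\widetilde{x + \lambda \chi_i} = \tilde{x} + \lambda(\chi_i - \chi_0)$. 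Consequently $f'(x;i,j) = \tilde{f}'(\tilde{x};i,j)$ and $\bar{c}(x;i,j)$ agrees with the corresponding step length for $\tilde{f}$; moreover $(N \setminus R) \cup \{0\} = \tilde{N} \setminus R$, which is exactly the index set used in Step 2 of {\sc ConstM-LSD} on ground set $\tilde{N}$.

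Given this correspondence, correctness is immediate: since {\sc ConstM-LSD} returns an optimal solution of Min$(\tilde{f},R,k)$, and since $x \in \dom f$ is optimal for \Mnat-Min$(f,R,k)$ if and only if $\tilde{x}$ is optimal for Min$(\tilde{f},R,k)$, the output of {\sc Const\Mnat-LSD} is an optimal solution of \Mnat-Min$(f,R,k)$.

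For the running time I would argue directly. Each iteration increases $x(R)$ by $\lambda \ge 1$, so there are at most $k - \underline{k}$ iterations. In each iteration Step 2 examines all pairs $(i,j)$ with $i \in R$ and $j \in (N \setminus R) \cup \{0\}$, of which there are $|R|\,(n - |R| + 1) = \Oh(|R|(n - |R|))$ (using $n - |R| \ge 1$), each slope evaluated in constant time. The only remaining cost is the step length in Step 3, which I would compute by linear search, incrementing $\lambda$ until the slope in direction $\chi_i - \chi_j$ changes or $x(R)$ reaches $k$; this costs $\Oh(\lambda + 1)$ evaluations in that iteration. Since the total increase of $x(R)$ over the whole run equals $k - \underline{k}$, these step-length computations sum to $\Oh(k - \underline{k})$ and are dominated by the slope searches, giving the claimed bound $\Oh(|R|(n - |R|)(k - \underline{k}))$; the reduction to $\Oh(n(k - \underline{k}))$ when $|R|$ or $|N \setminus R|$ is a constant follows since then $|R|(n - |R|) = \Oh(n)$. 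The main obstacle is avoiding an extra $\log L_\infty$ factor from the step-length computation, and this is handled precisely by the linear-search and charging argument above, so that the long steps cost no more in total than the underlying movement they perform.
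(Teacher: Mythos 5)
Your proof is correct and follows essentially the same route as the paper, which establishes this theorem purely through the reduction of \Mnat-Min$(f,R,k)$ to Min$(\tilde{f},R,k)$ and the already-proved result for {\sc ConstM-LSD}; your verification that each iteration commutes with the correspondence $x \leftrightarrow \tilde{x}$ (including the $j=0$ case and the identification $(N\setminus R)\cup\{0\} = \tilde{N}\setminus R$) is exactly the intended argument. Your linear-search-plus-charging analysis of the step-length computation is a welcome addition: the paper leaves implicit how $\bar{c}(x;i,j)$ is computed without incurring a $\log L_\infty$ factor, and your observation that the total step-length cost telescopes to $\Oh(k-\underline{k})$ cleanly justifies the stated bound.
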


 Similarly, we can also obtain  algorithm {\sc Const\Mnat-LSD2} 
and procedure {\sc Const\Mnat-IncSlope}($x$) for \Mnat-Min$(f,R,k)$,
which are the same as {\sc ConstM-LSD2}  and {\sc ConstM-IncSlope}($x$)
 for Min$(f,R,k)$,
except that procedure {\sc Const\Mnat-IncSlope}($x$)
is used in {\sc Const\Mnat-LSD2} 
instead of {\sc ConstM-IncSlope}($x$),
and the set $N\setminus R$  in Step 1 of {\sc Const\Mnat-IncSlope}($x$)
is replaced with 
$(N\setminus R)\cup\{0\}$.

\begin{theorem}
\label{thm:constMnat-inc_min_slope}
 For a vector $x \in M(\hat{k})$ with an integer $\hat{k} < k$,
the output $x'$ of the procedure {\sc Const\Mnat-IncSlope}$(x)$
satisfies $\phi^R(x')  > \phi^R(x)$, provided that $x'(R) < k$.
\end{theorem}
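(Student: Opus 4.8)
The plan is to derive this statement from its M-convex counterpart, Theorem~\ref{thm:constM-inc_min_slope}, by means of the equivalence between \Mnat-Min$(f,R,k)$ and Min$(\tilde{f},R,k)$ recorded in Section~\ref{sec:connect-const-Mnat}, rather than repeating the exchange-axiom calculations with (\Mnat-EXC) in place of (M-EXC). The point I would establish is that the procedure {\sc Const\Mnat-IncSlope}($x$) applied to $f$ is, under the natural lift, nothing but {\sc ConstM-IncSlope}($\tilde{x}$) applied to the M-convex function $\tilde{f}$ on the ground set $\tilde{N} = N \cup \{0\}$. Once this is in place, the conclusion follows by invoking Theorem~\ref{thm:constM-inc_min_slope} for $\tilde{f}$ and translating back.

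First I would recall the lift: let $\tilde{f} : \Z^{\tilde{N}} \to \Rinf$ be the M-convex function given by~\eqref{eqn:tilde_f}, and let $x \mapsto \tilde{x} = (x, -x(N))$ be the bijection between $\dom f$ and $\dom\tilde{f}$. Since $R \subseteq N$ and $0 \notin R$, this bijection preserves the constraint value, $x(R) = \tilde{x}(R)$, and carries $M(\hat{k})$ for \Mnat-Min$(f,R,\hat{k})$ onto $M(\hat{k})$ for Min$(\tilde{f},R,\hat{k})$; in particular the hypothesis $x \in M(\hat{k})$ lifts to $\tilde{x} \in M(\hat{k})$, and the proviso $x'(R) < k$ is equivalent to $\tilde{x}'(R) < k$.

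The key identities to verify are that slopes and step lengths are intertwined by the lift: for $i \in R$ and $j \in (N\setminus R) \cup \{0\}$ one has $f'(y;i,j) = \tilde{f}'(\tilde{y};i,j)$ and $\bar{c}(y;i,j) = \bar{c}(\tilde{y};i,j)$. For $j \ne 0$ this is immediate because the slaved $0$-th coordinate is untouched, while for $j = 0$ it follows from the identity $\tilde{y} + \lambda(\chi_i - \chi_0) = \widetilde{y + \lambda\chi_i}$, which holds since adjoining $\lambda\chi_i$ to $y$ decreases the slaved coordinate by exactly $\lambda$. Consequently $\phi^R(x) = \phi^R(\tilde{x})$, both being the minimum of $f'(\cdot;i,j)$ over $i \in R$ and $j$ ranging over $(N\setminus R)\cup\{0\} = \tilde{N}\setminus R$. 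Because {\sc Const\Mnat-IncSlope} is defined to be {\sc ConstM-IncSlope} with $N\setminus R$ replaced by $(N\setminus R)\cup\{0\}$, these identities show that the outer index set $N^+ = R$ for $i$, the inner index set $\tilde{N}\setminus R$ for $j$, the update rule, the step length $\lambda = \min\{k - y(R),\, \bar{c}(y;i,j)\}$, and the termination test $y(R) = k$ all coincide under the lift; hence the two procedures run in lock-step and the output $x'$ of {\sc Const\Mnat-IncSlope}($x$) lifts to the output $\tilde{x}'$ of {\sc ConstM-IncSlope}($\tilde{x}$).

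Finally I would apply Theorem~\ref{thm:constM-inc_min_slope} to $\tilde{f}$, obtaining $\phi^R(\tilde{x}') > \phi^R(\tilde{x})$ under $\tilde{x}'(R) < k$, and read this back through $\phi^R(x) = \phi^R(\tilde{x})$ and $\phi^R(x') = \phi^R(\tilde{x}')$ to get $\phi^R(x') > \phi^R(x)$, as claimed. I expect the only delicate point to be the bookkeeping for the $j = 0$ direction---confirming that the ``$+\chi_i$'' move of the \Mnat-procedure, together with its step-length computation $\bar{c}(\cdot;i,0)$, lifts exactly to the ``$+\chi_i - \chi_0$'' move for $\tilde{f}$---so that the two trajectories never diverge. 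Everything else is a direct transcription of the problem equivalence already set out in Section~\ref{sec:connect-const-Mnat}.
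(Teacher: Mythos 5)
Your proposal is correct and follows essentially the same route as the paper, which proves Theorem~\ref{thm:constMnat-inc_min_slope} only implicitly by appealing to the equivalence between \Mnat-Min$(f,R,k)$ and Min$(\tilde{f},R,k)$ set out in Section~\ref{sec:connect-const-Mnat} and then invoking Theorem~\ref{thm:constM-inc_min_slope} for the lifted M-convex function $\tilde{f}$. Your explicit verification that slopes, step lengths, and the $j=0$ (i.e.\ $+\chi_i-\chi_0$) moves are intertwined by the lift $x\mapsto(x,-x(N))$, so that the two procedures run in lock-step, is exactly the bookkeeping the paper leaves to the reader.
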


\begin{theorem}
\label{thm:constMnat-runtime}
Algorithm {\sc Const\Mnat-LSD2} outputs an optimal solution of
{\rm \Mnat-Min$(f, R, {k})$} in
$\Oh\big(|R|(n-|R|) (\log L_\infty)
\min\big\{\zeta(k)  -\zeta(\underline{k}), k - \underline{k}\big\}\big)$ time.
The running time reduces to 
$\Oh\big(n (\log L_\infty)
\min\big\{\zeta(k)  -\zeta(\underline{k}), k - \underline{k}\big\}\big)$
if either of $|R|$ and $|N \setminus R|$ is bounded by a constant.
\end{theorem}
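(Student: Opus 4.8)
The plan is to reduce everything to the M-convex results via the equivalence between \Mnat-Min$(f,R,k)$ and Min$(\tilde f,R,k)$ established at the start of this subsection, rather than re-running the entire analysis. Recall that $x\in\dom f$ corresponds to $\tilde x=(x,-x(N))\in\dom\tilde f$, that the two problems share the same optimal values $z(h)$ (hence the same $\zeta$ and the same $\underline k,\overline k$), and that the admissible direction set $(N\setminus R)\cup\{0\}$ for \Mnat-Min$(f,R,k)$ is exactly $\tilde N\setminus R$, the direction set of the M-convex problem Min$(\tilde f,R,k)$ on $\tilde N=N\cup\{0\}$, since $0\notin R$.

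First I would verify that {\sc Const\Mnat-LSD2} run on $f$ is, step for step, the image under $x\mapsto\tilde x$ of {\sc ConstM-LSD2} run on $\tilde f$. The only point needing care is the direction with $j=0$: for $i\in R$, moving $x$ by $+\chi_i$ sends $\tilde x=(x,-x(N))$ to $(x+\chi_i,-x(N)-1)=\widetilde{x+\chi_i}$, which is precisely the effect of the M-convex direction $+\chi_i-\chi_0$ on $\tilde N$; and since $\tilde f(\tilde x)=f(x)$, the slope $f'(x;i,0)=f(x+\chi_i)-f(x)$ and step length $\bar c(x;i,0)$ defined for the \Mnat-setting coincide with their genuine M-convex counterparts of $\tilde f$ along $+\chi_i-\chi_0$. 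For directions with $j\in N\setminus R$ the correspondence is immediate because $+\chi_i-\chi_j$ preserves $x(N)$. Consequently the two algorithms generate corresponding trajectories, so by the M-convex result {\sc Const\Mnat-LSD2} outputs a vector whose image is an optimal solution of Min$(\tilde f,R,k)$, i.e.\ an optimal solution of \Mnat-Min$(f,R,k)$; this settles correctness.

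For the running time I would bound the per-iteration cost and the number of iterations separately, paralleling the derivation of Theorem~\ref{thm:constM-runtime}. Within one call to {\sc Const\Mnat-IncSlope}$(x)$, the step length is computed once for each pair $(i,j)$ with $i\in R$ and $j\in(N\setminus R)\cup\{0\}$; there are $|R|(|N\setminus R|+1)=|R|(n-|R|+1)$ such pairs, which is $\Oh(|R|(n-|R|))$ because $R\subsetneq N$ forces $n-|R|\ge 1$. Each step length is at most the diameter $L_\infty$ of $\dom f$---note that even the direction $+\chi_i$ can only increase $x(i)$ within $\dom f$, so its step length is bounded by $L_\infty$---and is therefore computable by binary search in $\Oh(\log L_\infty)$ time, giving $\Oh(|R|(n-|R|)\log L_\infty)$ per call. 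For the number of calls, Theorem~\ref{thm:constMnat-inc_min_slope} shows that $\phi^R$ increases strictly after each call until $x(R)=k$ is reached; combined with $\phi^R(x)=z(h+1)-z(h)=\zeta(h)$ for $x\in M(h)$, the number of calls is at most $\zeta(k)-\zeta(\underline k)$, and trivially at most $k-\underline k$. Multiplying the two bounds yields the stated running time, and the special case follows by substituting $|R|(n-|R|)=\Oh(n)$ when $|R|$ or $|N\setminus R|$ is a constant.

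The step I expect to be the main (if modest) obstacle is the faithfulness check for the $j=0$ direction: one must confirm that the \Mnat-specific definitions of $f'(x;i,0)$ and $\bar c(x;i,0)$ genuinely agree with the M-convex slope and step length of $\tilde f$ along $+\chi_i-\chi_0$ on $\tilde N$, and in particular that no step length computed in {\sc Const\Mnat-IncSlope}$(x)$ ever ranges beyond $L_\infty$, so that the $\log L_\infty$ factor is correct rather than a cruder $\log$ of the diameter of $\dom\tilde f$ along the extra coordinate. Everything else is a transcription of the M-convex analysis.
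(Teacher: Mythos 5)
Your proposal is correct and takes essentially the same route as the paper: the paper likewise treats this theorem as an immediate translation of Theorem~\ref{thm:constM-runtime} through the correspondence $x \mapsto \tilde{x}=(x,-x(N))$ between \Mnat-Min$(f,R,k)$ and Min$(\tilde f,R,k)$, with the direction $+\chi_i$ playing the role of $+\chi_i-\chi_0$ on $\tilde N$. Your extra check that the step length along $+\chi_i$ is bounded by the diameter $L_\infty$ of $\dom f$ (so the $\log L_\infty$ binary-search factor survives the reduction) is a sensible detail the paper leaves implicit.
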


In the case of $R=N$, 
the description of the algorithm {\sc Const\Mnat-LSD} 
(and {\sc Const\Mnat-LSD2})
can be simplified as follows since
the element $j$ in the algorithm is always fixed to 0.

\begin{flushleft}
 \textbf{Algorithm} {\sc Const\Mnat-LSD3} 
\\
 \textbf{Step 0:} 
 Let $x_{\underline{k}} \in M(\underline{k})$ and 
set $x:=x_{\underline{k}}$.
\\
 \textbf{Step 1:} 
 If $x(N) = k$ then output $x$ and stop.
\\
 \textbf{Step 2:} 
 Let $i \in N$ be an element that minimizes $f'(x;i,0)$.
\\
 \textbf{Step 3:} 
 Set 
$\lambda := \min\{k - y(R), \bar{c}(y;i,0)\}$, 
  $x:= x+\lambda\chi_i$,
 and go to Step 1.
\end{flushleft}

\begin{coro}
Algorithm {\sc Const\Mnat-LSD3} outputs an optimal solution of
{\rm \Mnat-Min$(f, N, {k})$} in
$\Oh\big(n (\log L_\infty)
\min\big\{\zeta(k)  -\zeta(\underline{k}), k - \underline{k}\big\}\big)$ time.
\end{coro}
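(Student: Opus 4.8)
The plan is to obtain this corollary directly from Theorem~\ref{thm:constMnat-runtime} by specializing to $R = N$, since {\sc Const\Mnat-LSD3} is precisely {\sc Const\Mnat-LSD2} rewritten under the simplification that $R = N$ forces. Thus the only things I need to check are that the simplified algorithm remains correct and that the running time of Theorem~\ref{thm:constMnat-runtime} collapses to the stated bound when $R = N$.

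For correctness, I would first note that $R = N$ gives $N \setminus R = \emptyset$, so the second index of every admissible direction $+\chi_i - \chi_j$ in {\sc Const\Mnat-IncSlope} can only be $j = 0$; consequently the inner loop over $j$ degenerates to the single choice $j = 0$, and the resulting algorithm is exactly {\sc Const\Mnat-LSD3}. Correctness then follows from that of {\sc Const\Mnat-LSD2}: each outer execution moves only along directions $+\chi_i$ of slope $\phi^R(x)$, the optimality-preservation underlying {\sc Const\Mnat-LSD2} (Proposition~\ref{prop:constM-sdd-mono} together with Theorem~\ref{thm:constMnat-inc_min_slope}) keeps the iterate optimal for its current value of $x(N)$, and the test $x(N) = k$ in Step~1 halts the process exactly when an optimal solution of \Mnat-Min$(f, N, k)$ has been reached.

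For the running time, I would feed $R = N$ into the analysis underlying Theorem~\ref{thm:constMnat-runtime}. Here $(N \setminus R) \cup \{0\} = \{0\}$ has constant size, so one pass that raises $\phi^R$ examines each $i \in N$ at most once and computes a single step length $\bar c(y;i,0)$ per examined $i$, each costing $\Oh(\log L_\infty)$ by binary search; hence one outer iteration runs in $\Oh(n \log L_\infty)$ time. By Theorem~\ref{thm:constMnat-inc_min_slope} the value $\phi^R$ strictly increases at every outer iteration, and since $\phi^R(x) = z(x(N)+1) - z(x(N)) = \zeta(x(N))$ by Proposition~\ref{prop:constM-sdd-opt}, the number of outer iterations is at most $\zeta(k) - \zeta(\underline{k})$; it is also at most $k - \underline{k}$ because $x(N)$ increases by at least one per outer iteration. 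Multiplying the per-iteration cost by $\min\{\zeta(k) - \zeta(\underline{k}),\, k - \underline{k}\}$ gives the claimed bound.

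The main obstacle I foresee is reconciling the flat single-direction loop displayed in {\sc Const\Mnat-LSD3} with the round-based structure of {\sc Const\Mnat-IncSlope} on which the time bound depends: if one literally re-evaluated all $n$ slopes before each coordinate move, an extra factor of $n$ would enter and the bound would fail. I would dispose of this by interpreting one round of {\sc Const\Mnat-LSD3} as a single sweep over $N$ and invoking the slope-monotonicity established for this setting (Proposition~\ref{prop:constM-sdd-mono}, as used in the proof of Theorem~\ref{thm:constMnat-inc_min_slope}): once the slope of a direction $+\chi_i$ has been raised above $\phi^R(x)$ within a sweep, it stays above $\phi^R(x)$ for the remainder of that sweep, so scanning each $i$ exactly once per round suffices. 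With this identification the corollary is immediate from Theorem~\ref{thm:constMnat-runtime}.
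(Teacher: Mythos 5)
Your proposal is correct and follows the same route as the paper, which obtains this corollary by specializing Theorem~\ref{thm:constMnat-runtime} to $R=N$ (so that $(N\setminus R)\cup\{0\}=\{0\}$ and the factor $|R|(n-|R|)$ collapses to $n$). Your closing observation --- that the bound requires reading {\sc Const\Mnat-LSD3} as the $R=N$ specialization of the sweep-based {\sc Const\Mnat-LSD2}/{\sc Const\Mnat-IncSlope}, rather than recomputing the minimizing $i$ before every move --- is exactly the intended identification and a worthwhile clarification, not a deviation.
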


\begin{remark}\rm
The well-known greedy algorithm for linear optimization over a polymatroid 
 \cite{Edmonds70}
can be obtained as a specialized implementation of
the algorithm  {\sc Const\Mnat-LSD3}. 
 Note that in this case,
$\underline{k}=0$ holds, and 
$x_{\underline{k}}$ in Step 0 of {\sc Const\Mnat-LSD3}
is given as $x_{\underline{k}}= \0$.

 We can also specialize  {\sc Const\Mnat-LSD3} to 
the minimization of a separable-convex function over a polymatroid:
 \begin{center}
SC:   \quad Minimize \quad $\sum_{i \in N}f_i(x(i))$
\quad subject to \quad $x \in P(\rho),\ x(N) = \rho(N)$,
\end{center}
where $f_i: \Z \to \R$ is a univariate convex function for $i \in N$
and $\rho: 2^N \to \Z_+$ is a polymatroid rank function.
 The problem SC is formulated as \Mnat-Min$(f, N, \rho(N))$
 with an \Mnat-convex function $f$ such that
\[
 \textstyle 
 \dom f = P,\quad f(x) = \sum_{i \in N}f_i(x(i)) \ (x \in P).
\]
 By specializing {\sc Const\Mnat-LSD3} to SC,
we obtain the following algorithm,
which is a long step version of the incremental 
greedy algorithm  \cite{FG86,Groenevelt91}.

\begin{flushleft}
 \textbf{Algorithm} {\sc Greedy\_SC} 
\\
 \textbf{Step 0:} 
 Let $x =\0$.
 \\
 \textbf{Step 1:} 
  If $x(N) = \rho(N)$, then output $x$ and stop.
\\
 \textbf{Step 2:} 
 Let $i \in N$ be an element that minimizes $f_i(x(i)+1)-f_i(x(i))$
under the \\
\phantom{\textbf{Step 2:}}
  condition $x + \chi_i \in P$.
\\
 \textbf{Step 3:} 
 Let $\lambda \in \Z_+$ be the maximum integer satisfying 
$f_i(x(i)+\lambda) - f_i(x(i))=  $
\\
\phantom{\textbf{Step 2:}}
 $f_i(x(i)+1)-f_i(x(i))$ and $x + \lambda \chi_i \in P$.
  Set $x := x + \lambda \chi_i$
and go to Step 1.
\qed 
\end{flushleft}
\end{remark}


\section{Application to Polyhedral M-convex Function Minimization}
\label{sec:polyM}

  In this section, we consider the minimization of
a polyhedral M-convex functions defined on $\R^n$,
 and show that the steepest descent algorithms for M-convex functions on $\Z^n$
proposed in Section \ref{sec:M-long-step}
can be naturally extended to polyhedral M-convex functions.
 While the steepest descent algorithm finds a minimizer of a polyhedral M-convex function
if it terminates, it is not known so far whether the algorithm terminates
in a finite number of iterations.
 We show that
in a variant of the steepest descent algorithm with long step length,
the slope in the steepest descent direction increases strictly after $\Oh(n^2)$
iterations. 
 By using this property, we can obtain the first result 
on the finite termination of an exact algorithm for finding a global minimizer of a polyhedral M-convex functions. 
 
\subsection{Definition of Polyhedral M-convex Function}

 The concept of M-convexity is extended to polyhedral convex functions
on $\R^n$; a polyhedral convex function is a function $\R^n \to \Rinf$ such that
its epigraph $\{(x, \alpha) \in \R^n \times \R \mid f(x) \le \alpha\}$
is a polyhedron.
 By definition, a polyhedral convex function is a convex function on $\R^n$.

 A polyhedral convex function $\R^n \to \Rinf$ is said to be \textit{M-convex}
 \cite{polyML} 
if it satisfies the following  exchange axiom:
\begin{quote}
{\bf (M-EXC$[\R]$)}
for every
$x, y \in \domR f$ and $i \in \suppp(x-y)$,
there exists some
$j \in \suppm(x-y)$ and $\epsilon_0 > 0$ such that
\[
 f(x)  + f(y) \ge 
f(x - \epsilon(\chi_i - \chi_j)) + f(y + \epsilon(\chi_i - \chi_j))
\quad ( \epsilon \in [0,\epsilon_0]),
\]
\end{quote}
where $\domR f=\{x\in \R^n \mid f(x)< + \infty\}$.

 For a function $f: \Z^n \to \Rinf$
with bounded $\dom f$,
its convex closure $\bar{f}: \R^n \to \Rinf$ is given by
\[
\bar{f}(x) = \sup\{p^\top x + \alpha \mid 
p^\top y + \alpha \le f(y) \ (y \in \dom f)\} \qquad (x \in \R^n),
\]
which is a polyhedral convex function.
 It is known \cite{polyML} that if $f$ is an M-convex function, in addition,
then its convex closure $\bar{f}$ is polyhedral M-convex;
moreover, $\bar{f}(x) =f(x)$ holds for all $x \in \Z^n$
and $\min\{\bar{f}(x) \mid x\in \R^n\}=\min\{{f}(x) \mid x\in \Z^n\}$.
 In this sense, polyhedral M-convex functions are regarded as an extension of
 M-convex functions. 
 On the other hand, for a polyhedral M-convex function $f: \R^n \to \Rinf$,
its restriction on $\Z^n$ is an M-convex function on $\Z^n$ if 
$f$ is ``integral'' in the following sense:
$\arg\min\{f(x) - p^\top x \mid x \in \dom f\}$
is an integral polyhedron for every $p \in \R^n$.

 In Example \ref{ex:mcf} we provided an example of an M-convex function on $\Z^n$
arising from the minimum cost flow problem.
 In a similar way, we can obtain an example of polyhedral M-convex functions from
the minimum cost flow problem by 
replacing $f_a$ with a piecewise-linear convex function
and regarding $x$ and $\xi$ as real vectors
\cite{polyML}.

\subsection{Steepest Descent Algorithm}

 We propose a steepest descent algorithm for minimization
of a polyhedral M-convex function,
and show that it terminates after a finite number of iterations.

  Let $f:\R^n \to \Rinf$ be
a polyhedral  M-convex function such that $\domR f=\{x\in \R^n \mid f(x)< + \infty\}$
is bounded.
 This assumption guarantees the existence of a minimizer.
 It is well known that a global minimizer of an ordinary convex function
in real variables can be characterized by a local minimality
in terms of directional derivatives.
 For polyhedral M-convex functions, 
 local minimality is characterized by directional derivatives only in $\Oh(n^2)$ directions.
 For $x\in \domR f$ 
and $i,j \in N$, we denote by $f'_\R(x;i,j)$
the directional derivative of $f$ at $x$ in the direction $+ \chi_i - \chi_j$, i.e.,
\[
f'_\R(x;i,j) = \lim_{\alpha \downarrow 0}\frac{f(x+\alpha(\chi_i - \chi_j))-f(x)}{\alpha}.
\]
 Since $f$ is polyhedral convex,
$f'_\R(x;i,j)$ is well defined and 
there exists some $\epsilon > 0$ such that 
\[
f(x+\alpha(\chi_i - \chi_j)) = f(x)+\alpha f'_\R(x;i,j)  \qquad (0 \le \alpha \le \epsilon).
\]

\begin{theorem}[{\cite[Theorem~4.12]{polyML}}]
\label{thm:pM-minimizer}
 For a polyhedral M-convex function $f: \R^n \to \Rinf$,
a vector $x^* \in \domR f$ 
is a minimizer if and only if
$f'_\R(x^*;i,j) \ge 0$ for all $i, j \in N$.
\end{theorem}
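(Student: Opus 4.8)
The plan is to prove the two directions separately, with essentially all of the work residing in the ``if'' direction. The \emph{only if} direction is immediate from convexity: if $x^*$ minimizes $f$, then for every $i,j \in N$ and every $\alpha > 0$ the point $x^* + \alpha(\chi_i - \chi_j)$ satisfies $f(x^* + \alpha(\chi_i - \chi_j)) \ge f(x^*)$, so each difference quotient defining $f'_\R(x^*;i,j)$ is non-negative, and hence so is the limit.

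For the \emph{if} direction I would mimic the proof of the discrete analogue, Theorem~\ref{thm:M-minimizer}, replacing the unit exchange step $\chi_i - \chi_j$ by an infinitesimal step $\epsilon(\chi_i - \chi_j)$ supplied by the polyhedral exchange axiom (M-EXC$[\R]$). Assume $f'_\R(x^*;i,j) \ge 0$ for all $i,j$ but, for contradiction, that $x^*$ is not a minimizer. Since $\domR f$ is a bounded polyhedron and $f$ is polyhedral convex (hence lower semicontinuous with closed domain), $\arg\min f$ is a nonempty compact polytope, so I can choose $y^* \in \arg\min f$ minimizing $\|y^* - x^*\|_1$. As $f(y^*) < f(x^*)$ we have $y^* \ne x^*$, so $\suppp(y^* - x^*) \ne \emptyset$; fix any $i \in \suppp(y^* - x^*)$. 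Applying (M-EXC$[\R]$) to $y^*$, $x^*$, and $i$ yields some $j \in \suppm(y^* - x^*)$ and $\epsilon_0 > 0$ with
\[
f(y^*) + f(x^*) \ge f(y^* - \epsilon(\chi_i - \chi_j)) + f(x^* + \epsilon(\chi_i - \chi_j)) \qquad (\epsilon \in [0,\epsilon_0]).
\]

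The key step is to combine this with the local hypothesis. Since $f'_\R(x^*;i,j) \ge 0$, monotonicity of convex difference quotients gives $f(x^* + \epsilon(\chi_i - \chi_j)) \ge f(x^*)$ for all admissible $\epsilon$, so the displayed inequality forces $f(y^* - \epsilon(\chi_i - \chi_j)) \le f(y^*) = \min f$; hence $y^* - \epsilon(\chi_i - \chi_j) \in \arg\min f$ for $\epsilon \in (0,\epsilon_0]$. On the other hand, because $y^*(i) > x^*(i)$ and $y^*(j) < x^*(j)$, for all sufficiently small $\epsilon > 0$ no coordinate difference changes sign, so $\|y^* - \epsilon(\chi_i - \chi_j) - x^*\|_1 = \|y^* - x^*\|_1 - 2\epsilon$. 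Choosing $\epsilon$ small enough to respect all finitely many thresholds produces a minimizer strictly closer to $x^*$ than $y^*$, contradicting the choice of $y^*$. Therefore $x^*$ minimizes $f$.

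I expect the main obstacle to be organizing the descent so that it terminates: a naive iteration of infinitesimal exchange steps need not converge to $x^*$ in finitely many moves. The remedy, as above, is to avoid iteration entirely and argue by contradiction from a \emph{nearest} minimizer, which exists thanks to the polyhedral (hence compact) structure of $\domR f$ and $\arg\min f$; this collapses the argument to a single application of (M-EXC$[\R]$). The remaining points are routine: verifying that $x^* + \epsilon(\chi_i - \chi_j)$ and $y^* - \epsilon(\chi_i - \chi_j)$ lie in $\domR f$ for small $\epsilon$ (guaranteed by finiteness of the right-hand side in (M-EXC$[\R]$)), upgrading the directional-derivative inequality to $f(x^* + \epsilon(\chi_i - \chi_j)) \ge f(x^*)$, and the elementary $\ell_1$-distance computation.
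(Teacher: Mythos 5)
The paper does not prove this statement; it is quoted from \cite[Theorem~4.12]{polyML}, so there is no internal proof to compare against. Your argument is correct and is the natural transplant of the standard proof of the discrete analogue (Theorem~\ref{thm:M-minimizer}): take a minimizer $y^*$ nearest to $x^*$ in $\ell_1$-distance, apply (M-EXC$[\R]$) once to $y^*$, $x^*$ and some $i\in\suppp(y^*-x^*)$, use $f'_\R(x^*;i,j)\ge 0$ together with monotonicity of convex difference quotients to conclude $f(x^*+\epsilon(\chi_i-\chi_j))\ge f(x^*)$, and deduce that $y^*-\epsilon(\chi_i-\chi_j)$ is a strictly nearer minimizer --- a contradiction. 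Two small points to tighten. First, to get $\suppp(y^*-x^*)\ne\emptyset$ from $y^*\ne x^*$ you should invoke the fact that $\domR f$ of a polyhedral M-convex function lies in a hyperplane $x(N)=r$ (so that $y^*(N)=x^*(N)$); $y^*\ne x^*$ alone only gives that one of the two supports is nonempty. Second, the theorem as stated does not assume $\domR f$ bounded, whereas your existence of a nearest global minimizer does; this is harmless in the context of Section~\ref{sec:polyM}, where boundedness is a standing assumption, but for the general statement you would either restrict $f$ to a large box containing $x^*$ and a point $y$ with $f(y)<f(x^*)$ (restriction to a box preserves polyhedral M-convexity), or replace ``nearest minimizer'' by a nearest point of the nonempty compact polyhedron $\{y\in\domR f \mid f(y)\le f(x^*)-\delta\}\cap\{y \mid \|y-x^*\|_1\le C\}$ for suitable $\delta, C$. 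With those remarks the proof is complete.
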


 For $x \in \domR f$ and a direction of the form $+ \chi_i -\chi_j$, we say that 
$+ \chi_i -\chi_j$ is a \textit{steepest descent direction} of $f$ at $x$
if it minimizes the value $f'_\R(x;i, j)$ among all such directions.
 Denote 
by $\phi_\R(x)$  the slope of a steepest descent direction at $x$,
i.e., $\phi_\R(x) = \min_{i, j \in N}f'_\R(x;i, j)$.
 By the definition of $\phi_\R(x)$ and Theorem~\ref{thm:pM-minimizer}, 
we have $\phi_\R(x) \le 0$ for every $x \in \domR f$, 
and the equality holds if and only if $x$ is a minimizer of~$f$.

 By Theorem \ref{thm:pM-minimizer}, 
a minimizer of a polyhedral M-convex function can be found by
the steepest descent algorithm {\sc PM-LSD}, which is described in the same way as
{\sc M-LSD} for M-convex functions on $\Z^n$,
except that $x$ is a real vector (not necessarily integral), 
$f'(x;i,j)$ is replaced with $f'_\R(x;i,j)$, and
$\bar{c}(x;i,j)$ is replaced with $\bar{c}_\R(x;i,j)$ given by
\[
\bar{c}_\R(x;i, j) = \max\{\lambda \in \R_+ \mid 
 f(x + \lambda(\chi_i - \chi_j))- f(x) = \lambda f'_\R(x;i,j) \};
\]
 the value $\bar{c}_\R(x;i, j)$ is well defined since $f$ is a polyhedral convex function.
 It is not known so far whether the algorithm {\sc PM-LSD} terminates in
a finite number of iterations.

 We can show, as in  Section \ref{sec:Mconv}, that
$\phi_\R(x)$ is monotone non-decreasing in the algorithm {\sc PM-LSD}.
Proof is given in Section \ref{sec:proof:prop:pM-mj-sdd-mono} in Appendix.

\begin{prop}
\label{prop:pM-mj-sdd-mono}
 Let $y \in \dom f$ be a vector with $\phi_\R(y) < 0$,
$i,j \in N$ be distinct elements such that
$f'_\R(y;i,j) = \phi_\R(y)$, and
 $\lambda > 0$ be a real number
such that $f(y + \lambda(\chi_i - \chi_j)) - f(y) = \lambda \phi_\R(y)$.\\
{\rm (i)}
 The vector $\hat{y} = y+ \lambda(\chi_i - \chi_j)$
satisfies   $\phi_\R(\hat{y}) \ge \phi_\R(y)$.
\\
{\rm (ii)}
 For distinct $h,k \in N$, 
it holds that $f'_\R(\hat{y}; h,k) \ge \phi_\R(y)$.
 Moreover, if the inequality holds with equality, then
$+\chi_h - \chi_k$ is a steepest descent direction at $\hat{y}$
and satisfies $k \ne i$ and $h \ne j$.
\end{prop}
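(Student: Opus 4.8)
The plan is to establish (ii) first and read off (i) from it: since $\phi_\R(\hat y)=\min_{h,k}f'_\R(\hat y;h,k)$, the inequalities $f'_\R(\hat y;h,k)\ge\phi_\R(y)$ in (ii) give $\phi_\R(\hat y)\ge\phi_\R(y)$ immediately. A preliminary fact I would record is that the segment $[y,\hat y]$ is \emph{flat}: since $f'_\R(y;i,j)=\phi_\R(y)$ and $f(y+\lambda(\chi_i-\chi_j))-f(y)=\lambda\phi_\R(y)$, convexity of $t\mapsto f(y+t(\chi_i-\chi_j))$ forces $f(y+t(\chi_i-\chi_j))=f(y)+t\phi_\R(y)$ for all $t\in[0,\lambda]$. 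Moreover the hypotheses hold verbatim with any $s\in(0,\lambda]$ in place of $\lambda$, so once the inequality part of (ii) is proved it applies at every interior point $E=y+s(\chi_i-\chi_j)$, yielding the reusable bound $\phi_\R(E)\ge\phi_\R(y)$.

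For the inequality part I would mirror the proof of the discrete Proposition~\ref{prop:M-mj-sdd-mono}, replacing the unit-step exchange of (M-EXC) by the \emph{maximal} form of (M-EXC$[\R]$) (the exchange carried out up to the full feasible length) and passing to directional derivatives by dividing by $\epsilon$ and letting $\epsilon\downarrow0$. The key auxiliary step is a lemma asserting that re-increasing the coordinate $j$ we just lowered is never descent, i.e.\ $f'_\R(\hat y;j,m)\ge 0$ for all $m\in N\setminus\{i,j\}$: applying the maximal exchange to $p'=\hat y+\epsilon(\chi_j-\chi_m)$ and $y$ at the index $i\in\suppp(p'-y)$ returns a partner in $\{j,m\}$, and in either case backing off by the appropriate length ($\epsilon$ toward $\hat y+\epsilon(\chi_j-\chi_i)$, or the full $\lambda-\epsilon$ down to $y+\epsilon(\chi_i-\chi_m)$) and invoking flatness together with $f(y+\epsilon(\chi_i-\chi_m))\ge f(y)+\epsilon\phi_\R(y)$ yields $f(p')\ge f(\hat y)$. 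Given this lemma, a general direction $+\chi_h-\chi_k$ with $\{h,k\}\cap\{i,j\}=\emptyset$ is handled by applying the exchange at index $h$: the partner $k$ telescopes directly to $f'_\R(\hat y;h,k)\ge f'_\R(y;h,k)\ge\phi_\R(y)$, while the partner $j$ gives $f'_\R(\hat y;h,k)\ge f'_\R(\hat y;j,k)+\phi_\R(y)\ge\phi_\R(y)$ via the lemma. The overlap cases ($+\chi_i-\chi_j$ itself, and $+\chi_i-\chi_k$, $+\chi_h-\chi_j$) are similar short computations, the first being pure convexity.

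For the strict ``moreover'' part I would drop the exchange axiom and argue by convexity alone. If $f'_\R(\hat y;h,k)=\phi_\R(y)$ with $k=i$ and $h=j$, the direction is $+\chi_j-\chi_i$ and flatness gives $f'_\R(\hat y;j,i)=-\phi_\R(y)>0>\phi_\R(y)$ outright. If $k=i$ and $h\ne j$, set $p=\hat y+\epsilon(\chi_h-\chi_i)$ and $E=y+(\lambda-\epsilon)(\chi_i-\chi_j)$; the identity $p=E+\epsilon(\chi_h-\chi_j)$ together with $f(p)=f(y)+(\lambda+\epsilon)\phi_\R(y)$ and $f(E)=f(y)+(\lambda-\epsilon)\phi_\R(y)$ makes the difference quotient of $f$ from $E$ in direction $+\chi_h-\chi_j$ equal to $2\phi_\R(y)$, so $f'_\R(E;h,j)\le 2\phi_\R(y)<\phi_\R(y)$, contradicting $\phi_\R(E)\ge\phi_\R(y)$ from the reusable fact. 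The case $h=j$, $k\ne i$ is symmetric, using $\hat y+\epsilon(\chi_j-\chi_k)=E+\epsilon(\chi_i-\chi_k)$. Hence equality forces $k\ne i$ and $h\ne j$, and when equality holds $+\chi_h-\chi_k$ is steepest at $\hat y$ because then $\phi_\R(\hat y)=\phi_\R(y)$.

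The main obstacle is the mismatch between the macroscopic step $\lambda$ and the fact that (M-EXC$[\R]$) is only infinitesimal: the clean telescoping that drives the discrete proof (where backing off one unit lands exactly at a neighbour of $y$) is unavailable, so one branch of the auxiliary lemma genuinely requires backing off the full length $\lambda-\epsilon$. I therefore expect the crux of the write-up to be justifying this maximal exchange for polyhedral M-convex functions (extracting it from polyhedral M-convexity, or integrating the local axiom along the segment using convexity) and organizing the case analysis so that the asymmetric role of the lowered coordinate $j$ is isolated into the single lemma $f'_\R(\hat y;j,m)\ge0$; the flatness of $[y,\hat y]$ and the reuse of the inequality part at interior points are precisely what keep the strict part non-circular.
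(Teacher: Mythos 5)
Your overall architecture (prove the inequality in (ii) first, deduce (i), record flatness of the segment $[y,\hat y]$, and then get the ``moreover'' part by pure convexity at an interior point $E=y+(\lambda-\epsilon)(\chi_i-\chi_j)$) is sound, and your convexity argument for $k\ne i$, $h\ne j$ is correct and self-contained. But the proof has a genuine gap exactly where you flag it, and it is not a presentational issue: every one of your exchange steps requires the inequality of (M-EXC$[\R]$) to hold at a \emph{prescribed} length ($\epsilon$ in one branch, the full $\lambda-\epsilon$ in the other), whereas the axiom only supplies an unspecified $\epsilon_0>0$ depending on the pair of points. You cannot repair this by convexity alone: the function $g(\delta)=f(x-\delta(\chi_i-\chi_{j'}))+f(y+\delta(\chi_i-\chi_{j'}))$ is convex in $\delta$ with $g(\delta)\le g(0)$ on $[0,\epsilon_0]$, and a convex function that dips below its initial value on a small interval can exceed it later, so nothing forces $g(\lambda-\epsilon)\le g(0)$. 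A ``maximal exchange'' property strong enough for your telescoping would itself be a nontrivial theorem about polyhedral M-convex functions, and since your auxiliary lemma $f'_\R(\hat y;j,m)\ge 0$ and the partner-$j$ branch of the general case both rest on it, the proof as proposed does not close.

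The paper takes a different route that sidesteps long exchanges entirely. It first proves (Proposition \ref{prop:pM_lbound_sdd}) the global lower bound $f(z)-f(x)\ge(1/2)\|z-x\|_1\,\phi_\R(x)$, which is an immediate consequence of the decomposition theorem for polyhedral M-convex functions (\cite[Theorem 4.15]{polyML}): $z-x$ decomposes as $\sum\lambda_{hk}(\chi_h-\chi_k)$ with $\sum\lambda_{hk}=(1/2)\|z-x\|_1$ and $f(z)-f(x)\ge\sum\lambda_{hk}f'_\R(x;h,k)$. With this in hand, (i) is three lines: take $\tilde y=\hat y+\mu(\chi_h-\chi_k)$ realizing $\phi_\R(\hat y)$, note $f(\tilde y)-f(y)=\lambda\phi_\R(y)+\mu\phi_\R(\hat y)$, and compare with the lower bound $(1/2)\|\tilde y-y\|_1\phi_\R(y)\ge(\lambda+\mu)\phi_\R(y)$; the equality analysis of $\|\tilde y-y\|_1=2(\lambda+\mu)$ then yields $k\ne i$ and $h\ne j$ in (ii). This single inequality is precisely the ``macroscopic'' tool that replaces the maximal exchange you were missing, and the paper reuses it for the same purpose in the proof of Theorem \ref{thm:pM-inc_min_slope}. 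If you want to salvage your exchange-based write-up, the cleanest fix is to import Proposition \ref{prop:pM_lbound_sdd} and use it wherever you currently back off by $\lambda-\epsilon$; at that point, however, you may as well adopt the paper's shorter argument.
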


 To derive a finite bound on the number of iterations, 
we use algorithm {\sc PM-LSD2} and procedure {\sc PM-IncSlope}($x$), 
which are obtained by slight modification of
{\sc M-LSD2} and {\sc M-IncSlope}($x$)  for M-convex functions on $\Z^n$
as in the algorithm {\sc PM-LSD}.
 The following monotonicity property of
the procedure {\sc PM-IncSlope}$(x)$ can be obtained.

\begin{theorem}
\label{thm:pM-inc_min_slope}
 For a vector $x \in \dom f$ with $\phi_\R(x) < 0$,
the output $x'$ of the procedure {\sc PM-IncSlope}$(x)$ satisfies
$\phi_\R(x')  > \phi_\R(x)$.
\end{theorem}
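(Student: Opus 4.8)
The plan is to follow the proof of the integer case (Theorem~\ref{thm:M-inc_min_slope} in Section~\ref{sec:proofs-M}) almost verbatim, establishing polyhedral analogs of Lemmas~\ref{lem:M-slope_inc_1-2} and~\ref{lem:M-slope_inc_2} with $f'_\R$ and $\phi_\R$ in place of $f'$ and $\phi$, and with (M-EXC$[\R]$) and Proposition~\ref{prop:pM-mj-sdd-mono} replacing (M-EXC) and Proposition~\ref{prop:M-mj-sdd-mono}. Concretely, I would first prove the analog of Lemma~\ref{lem:M-slope_inc_1-2}: if $f'_\R(y;i,j) > \phi_\R(x)$ and $\hat{y} = y + \lambda(\chi_i - \chi_k)$ is obtained by a steepest-descent step with step length $\lambda=\bar{c}_\R(y;i,k)$ (so $f(y + t(\chi_i - \chi_k)) = f(y) + t\phi_\R(x)$ for $t \in [0,\lambda]$) and $k \notin \{i,j\}$, then $f'_\R(\hat{y};i,j) \ge f'_\R(y;i,j) > \phi_\R(x)$; this guarantees that a strict inequality produced inside Step~1 of {\sc PM-IncSlope} survives the remaining inner updates for the same $i$. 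Next I would prove the analog of Lemma~\ref{lem:M-slope_inc_2}: once $f'_\R(y;i,j) > \phi_\R(x)$ holds for \emph{every} $j \ne i$, it is preserved after any further steepest-descent step $\hat{y} = y + \lambda(\chi_h - \chi_k)$ taken in a later outer iteration. Iterating the two lemmas over all indices yields $f'_\R(x';i,j) > \phi_\R(x)$ for every distinct pair $i,j$, hence $\phi_\R(x') > \phi_\R(x)$.

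The essential new device, and the step I expect to be the main obstacle, is converting the finite-difference exchange arguments of the integer proof into statements about the directional derivatives $f'_\R$. The difficulty is that the threshold $\epsilon_0$ appearing in (M-EXC$[\R]$) depends on the chosen pair of points, so one cannot simultaneously pass to the limit both in the auxiliary displacement defining the target point and in the exchange step. I would decouple the two limits. To prove $f'_\R(\hat{y};i,j) \ge f'_\R(y;i,j)$, assume $f'_\R(\hat{y};i,j) < +\infty$ (otherwise the claim is trivial), fix a \emph{small but constant} $\epsilon > 0$, and set $p = y$ and $q = \hat{y} + \epsilon(\chi_i - \chi_j)$. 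Since $j\ne k$, one has $\suppp(p - q) = \{j,k\}$ and $\suppm(p - q) = \{i\}$, so (M-EXC$[\R]$) applied to $p,q$ and $j$ forces the exchange partner to be $i$ and yields, for all $\delta \in [0,\epsilon_0]$,
\[
f(y) + f(\hat{y} + \epsilon(\chi_i - \chi_j)) \ge f(y + \delta(\chi_i - \chi_j)) + f(\hat{y} + (\epsilon - \delta)(\chi_i - \chi_j)).
\]
Because $f$ is polyhedral, it is exactly affine along each of these segments once $\epsilon$ is small; substituting $f(y + \delta(\chi_i - \chi_j)) = f(y) + \delta f'_\R(y;i,j)$, $f(\hat{y} + \epsilon(\chi_i - \chi_j)) = f(\hat{y}) + \epsilon f'_\R(\hat{y};i,j)$, and $f(\hat{y} + (\epsilon-\delta)(\chi_i - \chi_j)) = f(\hat{y}) + (\epsilon - \delta) f'_\R(\hat{y};i,j)$, the inequality collapses to $\delta f'_\R(\hat{y};i,j) \ge \delta f'_\R(y;i,j)$, which gives the claim after dividing by any $\delta \in (0, \min\{\epsilon, \epsilon_0\}]$. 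The same $\epsilon$-$\delta$ substitution turns the exchange inequalities underlying the analog of Lemma~\ref{lem:M-slope_inc_2} into directional-derivative inequalities.

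For the analog of Lemma~\ref{lem:M-slope_inc_2} I would follow the two-case split of Section~\ref{sec:proofs-M}. In the generic case, where the moving direction $+\chi_h - \chi_k$ and the tested direction $+\chi_i - \chi_{j^*}$ share no index, (M-EXC$[\R]$) offers one of two exchange partners; as in the integer proof, both options give a valid lower bound, using that $+\chi_h - \chi_k$ is a steepest-descent direction at $y$ (hence $f'_\R(y;h,k) \le f'_\R(y;h,j^*)$) together with $f'_\R(y;i,j) > \phi_\R(x)$ for all $j \ne i$, and the $\epsilon$-$\delta$ linearization then delivers $f'_\R(\hat{y};i,j^*) > \phi_\R(x)$. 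The degenerate subcase, where the two directions overlap (i.e.\ $i = k$ or $j^* = h$), is handled directly by Proposition~\ref{prop:pM-mj-sdd-mono}(ii): since $+\chi_h - \chi_k$ realizes $\phi_\R(y)=\phi_\R(x)$, the refinement there (equality in $f'_\R(\hat{y};h',k') \ge \phi_\R(y)$ forces $k' \ne h$ and $h' \ne k$) is violated by $(h',k')=(i,j^*)$ precisely when $i=k$ or $j^*=h$, so equality is impossible and $f'_\R(\hat{y};i,j^*) > \phi_\R(x)$ follows. I expect the careful bookkeeping of which exchange partner occurs and of these degenerate overlaps, rather than the $\epsilon$-$\delta$ passage itself, to be the most error-prone part of the write-up.
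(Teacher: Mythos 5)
Your first lemma (the analog of Lemma~\ref{lem:M-slope_inc_1-2}) and your $\epsilon$--$\delta$ decoupling are correct and essentially identical to the paper's Lemma~\ref{lem:pM-slope_inc_1-2}: there the exchange partner is forced to be $i$, and both points produced by (M-EXC$[\R]$) lie on single-direction rays from $y$ and from $\hat{y}$, so the affine substitutions collapse everything to $f'_\R(\hat{y};i,j)\ge f'_\R(y;i,j)$. Your treatment of the degenerate overlap case via Proposition~\ref{prop:pM-mj-sdd-mono}(ii) also matches the paper.

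The gap is in the generic case of your second lemma. In the integer proof, the unit exchange in (M-EXC) applied to $y$ and $\tilde{y}=y+\chi_h-\chi_k+\chi_i-\chi_{j^*}$ exactly cancels one of the two unit displacements, so every function value appearing in the exchange inequality is of the form $f(y+\chi_p-\chi_q)$ and can be bounded by slopes at $y$. In the polyhedral setting the step in the direction $+\chi_h-\chi_k$ has length $\lambda=\bar{c}_\R(y;h,k)$, which can be large, while (M-EXC$[\R]$) only exchanges by a small $\epsilon\le\epsilon_0$. Consequently the exchange produces points such as $y+(\lambda-\epsilon)(\chi_h-\chi_k)+\delta(\chi_i-\chi_{j^*})$ (or, applying the axiom from the $\tilde{y}$ side as the paper does, $\hat{y}+\delta(\chi_i-\chi_{j^*})-\epsilon(\chi_i-\chi_k)$), which are displacements from $y$ in \emph{two or three} directions simultaneously. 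Your $\epsilon$--$\delta$ linearization only evaluates $f$ along a single ray from $y$ or from $\hat{y}$, so the assertion that ``both options give a valid lower bound'' does not go through: you have no tool to lower-bound $f$ at these composite points. The paper closes exactly this hole with Proposition~\ref{prop:pM_lbound_sdd}, a global subgradient-type bound $f(v)-f(y)\ge(1/2)\|v-y\|_1\,\phi_\R(y)$ obtained from a conformal decomposition of $v-y$ into directions $\chi_p-\chi_q$; this bounds $f$ at the composite points by $\phi_\R(x)$ times their $\ell_1$-distance from $y$, and the bookkeeping of $\lambda$, $\delta$, $\epsilon$ then yields $\delta f'_\R(\hat{y};i,j^*)>\delta\phi_\R(x)$. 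You would need to add this proposition (or an equivalent device) for your plan to work.
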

\noindent
 Proof is given in Section \ref{sec:proof:thm:pM-inc_min_slope} 
 in Appendix.
 While the proof outline of Theorem \ref{thm:pM-inc_min_slope} is the same as
that of Theorem \ref{thm:M-inc_min_slope} for M-convex functions on $\Z^n$,
more careful analysis is required in the proof
due to the difference between domains $\Z^n$ and $\R^n$ of functions.

Since $f$ is a polyhedral convex function,
the directional derivative $f'_\R(x;i,j)$ can take only a finite number of values,
from which follows that $\{\phi_\R(x) \mid x \in \dom f\}$ is a finite set.
 This observation and Theorem \ref{thm:pM-inc_min_slope} imply
the finite termination of the algorithm {\sc PM-LSD2}.

\begin{theorem}
\label{thm:pM-runtime}
 For a polyhedral M-convex function $f: \R^n \to \Rinf$
with bounded $\domR f$,
the algorithm {\sc PM-LSD2} outputs a minimizer of $f$
in a finite number of iterations.
\end{theorem}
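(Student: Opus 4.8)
The plan is to combine the correctness of the stopping rule with the strict monotonicity furnished by Theorem~\ref{thm:pM-inc_min_slope}. For correctness, observe that {\sc PM-LSD2} outputs $x$ only when Step~1 detects $\phi_\R(x) = 0$; since $\phi_\R(x) = \min_{i,j \in N} f'_\R(x;i,j)$, this forces $f'_\R(x;i,j) \ge 0$ for all $i,j \in N$, and Theorem~\ref{thm:pM-minimizer} then certifies that $x$ is a minimizer. Boundedness of $\domR f$ guarantees a minimizer exists, so the stopping condition is attainable in principle. It therefore remains only to show that the algorithm halts.

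For finite termination I would argue by contradiction. Write $x^{(0)} = x_0$ and let $x^{(t+1)}$ be the output of {\sc PM-IncSlope}$(x^{(t)})$ for each outer iteration that does not stop. Such an outer iteration is entered only when Step~1 fails, i.e.\ $\phi_\R(x^{(t)}) \ne 0$; since $\phi_\R \le 0$ holds everywhere, this means $\phi_\R(x^{(t)}) < 0$, which is exactly the hypothesis of Theorem~\ref{thm:pM-inc_min_slope}. That theorem then yields the strict increase $\phi_\R(x^{(t+1)}) > \phi_\R(x^{(t)})$. Consequently, if the algorithm never stopped it would generate an infinite, strictly increasing sequence $\phi_\R(x^{(0)}) < \phi_\R(x^{(1)}) < \cdots$.

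The contradiction comes from the finiteness of the range of $\phi_\R$. Because $f$ is polyhedral convex, $\domR f$ decomposes into finitely many faces on each of which $f$ is affine, so for each fixed pair $(i,j)$ the directional derivative $f'_\R(x;i,j)$ takes only finitely many values as $x$ varies over $\domR f$; hence $\phi_\R(x)$, being a minimum over the finitely many pairs $(i,j)$, ranges over a finite set. An infinite strictly increasing real sequence cannot lie in a finite set, which is the desired contradiction. Moreover, each individual call {\sc PM-IncSlope}$(x)$ itself halts, since its loops run over the finite, monotonically shrinking index sets $N^+$ and $N^-_i$; thus only finitely many updates occur overall, and the algorithm terminates after finitely many iterations at a point with $\phi_\R(x) = 0$, which is a minimizer by the first paragraph.

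The heavy lifting sits entirely in Theorem~\ref{thm:pM-inc_min_slope}, the strict increase of $\phi_\R$ across one application of {\sc PM-IncSlope}, whose proof over $\R^n$ is more delicate than its $\Z^n$ counterpart; granting that result, the argument above is essentially a well-foundedness observation. The main point still requiring care is making the finiteness of $\{\phi_\R(x) \mid x \in \domR f\}$ fully rigorous, which I would pin down precisely via the finite face decomposition of the polyhedron $\domR f$ just described, so that $f'_\R(x;i,j)$ depends only on the (finitely many) faces and the fixed direction $+\chi_i-\chi_j$.
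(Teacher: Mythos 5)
Your proof is correct and follows essentially the same route as the paper: finiteness of the range of $\phi_\R$ (from the polyhedral structure of $f$) combined with the strict increase guaranteed by Theorem~\ref{thm:pM-inc_min_slope}, plus Theorem~\ref{thm:pM-minimizer} for correctness of the stopping rule. Your extra justification of the finite range via the face decomposition of $\domR f$ simply makes explicit what the paper asserts in one line.
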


\appendix

\section*{Appendix}

\section{Omitted Proofs}

\subsection{Proof of Proposition \ref{prop:M-mj-sdd-mono}}
\label{proof:prop:M-mj-sdd-mono}

  Denote $\hat{y} =y+ \chi_i - \chi_j$.
  We prove the inequality $\phi(\hat{y}) \ge \phi({y})$ only
since the other claims in Proposition \ref{prop:M-mj-sdd-mono} can be obtained easily
from this inequality.
   Let $h,k \in N$ be distinct elements with $f'(\hat{y}; h,k) = \phi(\hat{y})$,
and denote 
$\widetilde{y} = \hat{y} +\chi_h -\chi_{k}$.
 We note that
\begin{align}
  f(\widetilde{y}) - f(y) = (f(\hat{y}) - f(y)) + (f(\widetilde{y}) - f(\hat{y})) 
=   \phi(y) +\phi(\hat{y}).
\label{eq:prop:M-mj-sdd-mono:4}
\end{align}

 Suppose first that $i \ne k$ and $j \ne h$.
 Since $\suppm(\widetilde{y}-y) = \{j,k\}$, 
 the condition (M-EXC) applied to $\widetilde{y}$, $y$, and $i \in \suppp(\widetilde{y}-y)$
implies that
\begin{align*}
 f(\widetilde{y}) + f(y) 
& \ge 
\min\{f(y + \chi_h - \chi_{k}) + f(y + \chi_i - \chi_{j}),
f(y + \chi_h - \chi_{j}) + f(y + \chi_i - \chi_{k})
 \} 
\\
& \ge 2(\phi(y) + f(y)),
\end{align*}
where the last inequality is by the definition of $\phi(y)$.
 This inequality, combined with \eqref{eq:prop:M-mj-sdd-mono:4},
implies $\phi(\hat{y}) \ge \phi(y)$.

 We then assume $i \ne k$ and $j = h$, implying that 
$\widetilde{y} = y+ \chi_i - \chi_k$.
 Since $+\chi_i  - \chi_j$ is a steepest descent direction at $y$,
we have $f(\widetilde{y})-f(y) \ge f(\hat{y}) - f(y) = \phi(y)$,
which, combined with \eqref{eq:prop:M-mj-sdd-mono:4},
implies $\phi(\hat{y}) = f(\widetilde{y})- f(\hat{y}) \ge 0 > \phi(y)$.
 The proof for the case with $i = k$ and $j \ne h$ is similar and omitted.

  We finally show that the case with $i = k$ and $j = h$ is not possible. 
 If $i = k$ and $j = h$, then $\widetilde{y} = y$ and therefore
\[
0 \ge\phi(\hat{y}) = f(\widetilde{y}) - f(\hat{y}) 
= f(y) - f(\hat{y})  = -f'(y;i,j) = - \phi(y) > 0,
\]
a contradiction.

\subsection{Proof of Proposition \ref{prop:constM-sdd-mono}}
\label{sec:constM-proofs}

Proposition \ref{prop:constM-sdd-mono} can be obtained by using the fact that 
the optimal value function $z(k)$ is
a convex function in $k$, i.e., the slope of $z(k)$ is monotone non-decreasing.

\begin{prop}
\label{prop:constM-z-convex}
It holds that
$z(k) - z(k-1) \le z(k+1) - z(k)$ 
 $(\underline{k} < k < \overline{k})$.
\end{prop}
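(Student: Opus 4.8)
The goal is to establish the convexity inequality, which after rearrangement reads $z(k-1) + z(k+1) \ge 2 z(k)$. The plan is to derive this directly from the exchange axiom (M-EXC). Since $\underline{k} < k < \overline{k}$, the integers $k-1$, $k$, $k+1$ all lie in $[\underline{k},\overline{k}]$, so $M(k-1)$, $M(k)$, and $M(k+1)$ are nonempty. First I would fix a pair $x^{+} \in M(k+1)$ and $x^{-} \in M(k-1)$ that minimizes the distance $\|x^{+}-x^{-}\|_1$ among all such pairs, and set $d = x^{+}-x^{-}$. Because $x^{+}(R)=k+1$ and $x^{-}(R)=k-1$, we have $d(R)=2>0$, so $\suppp(d) \cap R \neq \emptyset$ and I may pick some $i \in \suppp(d) \cap R$.

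Next I would apply (M-EXC) to $x^{+}$, $x^{-}$, and this $i$, obtaining some $j \in \suppm(d)$ with
\[
f(x^{+}) + f(x^{-}) \ge f(x^{+}-\chi_i+\chi_j) + f(x^{-}+\chi_i-\chi_j).
\]
The favorable case is $j \in N \setminus R$: then both $x^{+}-\chi_i+\chi_j$ and $x^{-}+\chi_i-\chi_j$ have $R$-sum exactly $k$, hence are feasible for $\mathrm{Min}(f,R,k)$, so the right-hand side is at least $2z(k)$. Since the left-hand side equals $z(k+1)+z(k-1)$, this already yields the desired inequality.

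The main obstacle is the remaining case $j \in R$, which I must exclude using minimality of the chosen pair. I would first observe that the exchange inequality forces $f(x^{+}-\chi_i+\chi_j)$ and $f(x^{-}+\chi_i-\chi_j)$ to be finite; writing $\bar x^{+}=x^{+}-\chi_i+\chi_j$ and $\bar x^{-}=x^{-}+\chi_i-\chi_j$, these keep the $R$-sums $k+1$ and $k-1$, so combining $f(\bar x^{+}) \ge z(k+1) = f(x^{+})$ and $f(\bar x^{-}) \ge z(k-1) = f(x^{-})$ with the exchange inequality forces equality throughout, whence $\bar x^{+} \in M(k+1)$ and $\bar x^{-} \in M(k-1)$. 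Now I would produce a strictly closer pair. If $d(i) \ge 2$ or $d(j) \le -2$, then $\|\bar x^{+}-\bar x^{-}\|_1 = \|d - 2\chi_i + 2\chi_j\|_1 \le \|d\|_1 - 2$, so $(\bar x^{+},\bar x^{-})$ contradicts minimality. In the delicate boundary case $d(i)=1$ and $d(j)=-1$ this estimate only gives equality, so instead I would use the pair $(x^{+},\bar x^{-})$: since $x^{+}-\bar x^{-} = d - \chi_i + \chi_j$ zeroes out coordinates $i$ and $j$, its norm is $\|d\|_1-2$, again contradicting minimality. This boundary case—where replacing \emph{both} endpoints fails to decrease the distance but replacing only one succeeds—is exactly the step requiring the most care. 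Concluding that $j \in R$ is impossible leaves only the favorable case, which completes the proof.
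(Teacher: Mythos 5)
Your proof is correct and follows essentially the same route as the paper: a distance-minimizing pair $x^- \in M(k-1)$, $x^+ \in M(k+1)$, an application of (M-EXC) with $i \in \suppp(x^+ - x^-)\cap R$, the favorable case $j \in N\setminus R$ giving the inequality directly, and the case $j \in R$ excluded by upgrading the exchanged vectors to optimal solutions and exhibiting a strictly closer pair. The only difference is cosmetic: the paper goes straight to the one-endpoint replacement $(x^+,\, x^- + \chi_i - \chi_j)$, which decreases the $\ell_1$-distance by exactly $2$ unconditionally, so your preliminary case analysis on $d(i)$ and $d(j)$ for the two-endpoint replacement is an unnecessary detour (though handled correctly).
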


\begin{proof}
 We prove the inequality $z(k-1) + z(k+1) \ge 2 z(k)$.
 Let $x \in M(k-1)$ and $\hat{x} \in M(k+1)$ be vectors that minimize
the value $\|x-\hat{x}\|_1$.
 Note that $f(x) = z(k-1)$ and $f(\hat{x}) = z(k+1)$.
 It suffices to show that there exists a vector $y \in \dom f$ with
$y(R)=k$ such that $f(x)+f(\hat{x}) \ge 2f(y)$ since $f(y) \ge z(k)$.

Since $\hat{x}(R) = k+1 > k-1 = x(R)$, we have
$\suppp(\hat{x}-x) \cap R \ne \emptyset$.
 By (M-EXC) applied to $\hat{x}, x$, and an arbitrarily chosen $i \in \suppp(\hat{x}-x) \cap R$,
there exists some $j \in \suppm(\hat{x}-x)$ such that
\begin{align}
\label{eqn:prop:constM-z-convex:1}
  f(\hat{x})+f(x) 
& \ge f(\hat{x}+ \chi_i -\chi_j)+f(x-\chi_i +\chi_j).
\end{align}

 Suppose that $j \in N \setminus R$.
 Then, we have
\begin{align*}
(\hat{x}+ \chi_i -\chi_j)(R) = (x-\chi_i +\chi_j)(R) = k.
\end{align*}
 By \eqref{eqn:prop:constM-z-convex:1}, it holds that
\begin{align*}
2\min\{f(\hat{x}+ \chi_i -\chi_j),f(x-\chi_i +\chi_j)\}  
\le f(\hat{x})+f(x). 
\end{align*}
 This shows that either of 
$y=\hat{x}+ \chi_i -\chi_j$ and $y=x-\chi_i +\chi_j$
satisfies the desired condition.

 To conclude the proof, we show that $j \in R$ is not possible.
 Assume, to the contrary, that $j \in R$.
 Then, it holds that
\[
(\hat{x}+ \chi_i -\chi_j)(R) = \hat{x}(R) = k+1, \qquad
(x-\chi_i +\chi_j)(R) = x(R) = k-1,
\]
from which follows that 
$f(\hat{x}+ \chi_i -\chi_j) \ge z(k+1)$ and $f(x-\chi_i +\chi_j) \ge z(k-1)$.
 These inequalities and \eqref{eqn:prop:constM-z-convex:1} imply that
\begin{align*}
z(k+1) + z(k-1) 
& = f(\hat{x})+f(x) \\
& \ge f(\hat{x}+ \chi_i -\chi_j)+f(x-\chi_i +\chi_j)
  \ge z(k+1) + z(k-1). 
\end{align*}
 Hence, all inequalities must hold with equality, i.e.,
we have $\hat{x}+ \chi_i -\chi_j \in M(k+1)$
and $x-\chi_i +\chi_j \in M(k-1)$.
 This, however, is a contradiction to the choice of $x$ and $\hat{x}$
since 
$\|(x-\chi_i +\chi_j) - \hat{x} \|_1 =\|x-\hat{x}\|_1 - 2$. 
\end{proof}

\begin{proof}[Proof of Proposition \ref{prop:constM-sdd-mono}]
 We have $x + \chi_i - \chi_j \in M(k+1)$ and $f'(x;i,j) = z(k+1) - z(k)$
by Proposition \ref{prop:constM-sdd-opt}.
 Proposition~\ref{prop:constM-z-convex} implies that
for every $h \in R$ and $\ell \in N\setminus R$, it holds that
\begin{align*}
f'(x + \chi_i - \chi_j; h,\ell) 
& \ge 
z(k+2) - z(k+1) \\
&
\ge z(k+1) - z(k)  = f'(x;i,j) = f'(x + \chi_i - \chi_j;i,j).   
\end{align*}
 This inequality, together with Proposition \ref{prop:constM-sdd-opt}, implies that
$x + 2\chi_i - 2\chi_j \in M(k+2)$.
\end{proof}

\subsection{Proof of Theorem \ref{thm:constM-inc_min_slope}}
\label{sec:proofs-constM}

We show that  $\phi^R(x')  > \phi^R(x)$ holds if $x'(R) < k$.
 The proof given below is similar to the one for Theorem \ref{thm:M-inc_min_slope}
for unconstrained M-convex function minimization.
  Let us consider Step 1 in some outer iteration
of the algorithm {\sc ConstM-IncSlope}$(x)$, and
let $i\in R$ be the element taken at the beginning of Step 1. 
The vector $y$ at the end of Step 1-1 satisfies the inequality $f'(y;i,j) > \phi^R(x)$
if ${y}(R) < k$.
 We first show that this inequality is preserved until the end
of the inner iterations in Step 1.

\begin{lemma}
\label{lem:constM-slope_inc_1}
Let $y \in \dom f$ be vectors with $\phi^R(y)=\phi^R(x)$,
and $i,j \in N$ be distinct elements such that $f'(y; i, j) > \phi(x)$.
 For $k \in N \setminus (R \cup \{j\})$ with $y+ \chi_i - \chi_k \in \dom f$, we have
$f'(y+ \chi_i - \chi_k;  i, j) > \phi^R(x)$.
\end{lemma}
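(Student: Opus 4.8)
The plan is to mirror the argument for the unconstrained analogue, Lemma~\ref{lem:M-slope_inc_1-2}, adapting it to the constrained directions. Writing $\hat{y} = y + \chi_i - \chi_k$, I want to bound $f'(\hat{y}; i, j) = f(\hat{y} + \chi_i - \chi_j) - f(\hat{y})$ from below, so I would set $\tilde{y} = \hat{y} + \chi_i - \chi_j = y + 2\chi_i - \chi_j - \chi_k$ and aim to show $f(\tilde{y}) - f(\hat{y}) \ge f'(y;i,j)$. Since the hypothesis gives $f'(y;i,j) > \phi^R(x)$, this immediately yields the desired strict inequality $f'(\hat{y}; i, j) > \phi^R(x)$. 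If $\tilde{y} \notin \dom f$ the bound is trivial, so I would assume $\tilde{y} \in \dom f$.

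The core step is a single application of (M-EXC) to the pair $y, \tilde{y}$. First I would record that $i, j, k$ are pairwise distinct: $i \ne j$ is assumed, $k \ne j$ because $k \in N \setminus (R \cup \{j\})$, and --- the point specific to the constrained setting --- $k \ne i$ because $i \in R$ while $k \notin R$. With this, $y - \tilde{y} = -2\chi_i + \chi_j + \chi_k$ has $\suppm(y - \tilde{y}) = \{i\}$ and $j \in \suppp(y - \tilde{y})$. Applying (M-EXC) to $y$, $\tilde{y}$, and $j$, the exchange index is forced to be $i$, so $f(y) + f(\tilde{y}) \ge f(y + \chi_i - \chi_j) + f(\tilde{y} + \chi_j - \chi_i)$. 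Recognising that $\tilde{y} + \chi_j - \chi_i = \hat{y}$ and rearranging would give $f(\tilde{y}) - f(\hat{y}) \ge f(y + \chi_i - \chi_j) - f(y) = f'(y;i,j)$, completing the proof.

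I expect the only delicate point to be the verification that $k \ne i$: unlike in Lemma~\ref{lem:M-slope_inc_1-2}, where the hypothesis $k \in N \setminus \{i,j\}$ supplies distinctness directly, here I must extract $k \ne i$ from the membership conditions $i \in R$ and $k \notin R$. Once pairwise distinctness is secured, the support of $y - \tilde{y}$ is pinned down, (M-EXC) leaves no freedom in the choice of exchange index, and the remaining manipulation is routine.
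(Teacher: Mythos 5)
Your proof is correct and follows essentially the same route as the paper's: the same choice of $\tilde{y}=y+2\chi_i-\chi_j-\chi_k$, the same single application of (M-EXC) to $y$, $\tilde{y}$, and $j\in\suppp(y-\tilde{y})$ with the exchange index forced to be $i$ since $\suppm(y-\tilde{y})=\{i\}$, and the same rearrangement yielding $f'(y+\chi_i-\chi_k;i,j)\ge f'(y;i,j)>\phi^R(x)$. Your explicit verification that $i,j,k$ are pairwise distinct (using $i\in R$, $k\notin R$ from the context of the procedure) is a point the paper leaves implicit, and it is a worthwhile addition.
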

 
\begin{proof}
 Let $\tilde{y} =y + \chi_i - \chi_k + \chi_i - \chi_j$.
 It suffices to show that $f(\tilde{y}) - f(y+ \chi_i - \chi_k) \ge f'(y; i, j)$
since $f'(y; i, j) > \phi(x)$.
 If $f(\tilde{y}) = + \infty$ then we are done; hence we assume $\tilde{y} \in \dom f$.
 By (M-EXC) applied to $y$, $\tilde{y}$, and $j \in \suppp(y- \tilde{y})$, it holds that
\begin{align*}
f(y) + f(\tilde{y}) 
& \ge  f(y - \chi_j + \chi_i) + f(\tilde{y} + \chi_j - \chi_i) 
=  f(y - \chi_j + \chi_i) + f(y + \chi_i - \chi_k)
\end{align*}
since $\suppm(y- \tilde{y})=\{i\}$.
 It follows that
\begin{align*}
f(\tilde{y}) - f(y+ \chi_i - \chi_k)
& \ge f(y + \chi_i - \chi_j) - f(y) = f'(y;i,j).
\end{align*}
\end{proof}

 Repeated application of Lemma~\ref{lem:constM-slope_inc_1} implies that 
vector $y$ at the end of Step 1 satisfies the inequalities
\begin{align}
\label{eqn:constM_slope_inc_1:1}
f'(y; i, j) > \phi^R(x) \qquad (j \in N\setminus R).
\end{align}

 Suppose that the inequalities \eqref{eqn:constM_slope_inc_1:1} for some $i \in R$
is satisfied by the vector $y$ at the end of Step 1 in some outer iteration.
 We then show that these  inequalities are preserved
in the following outer iterations.

\begin{lemma}
\label{lem:constM-slope_inc_2}
Let $y \in \dom f$ be vectors with $\phi^R(y)=\phi^R(x)$,
and $i \in R$ be an element satisfying $f'(y; i, j) > \phi^R(x)$ for every $j \in N\setminus R$.
Also, let $h \in R$ and $k \in N\setminus R$ be elements such that
$f'(y;h,k) = \phi^R(x)$.
 Then, 
$f'(y+\chi_h - \chi_k; i, j) > \phi^R(x)$ holds for every $j \in N\setminus R$.
\end{lemma}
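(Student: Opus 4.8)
The plan is to mirror the proof of Lemma~\ref{lem:M-slope_inc_2} for the unconstrained case, while exploiting one extra piece of structure: since $R$ and $N\setminus R$ are disjoint, the ``ascent'' indices and the ``descent'' indices of the vectors involved can never coincide. I would fix an arbitrary $j^* \in N\setminus R$ and set $\tilde{y} = y+\chi_h - \chi_k + \chi_i - \chi_{j^*}$. Since $f'(y+\chi_h-\chi_k;\,i,j^*) = f(\tilde{y}) - f(y+\chi_h-\chi_k)$, it suffices to show $f(\tilde{y}) - f(y+\chi_h-\chi_k) > \phi^R(x)$; if $\tilde{y}\notin\dom f$ this is immediate, so I would assume $\tilde{y}\in\dom f$. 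Note $y+\chi_h-\chi_k\in\dom f$ as well, since its value equals $f(y)+\phi^R(x)$, which is finite.

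Before the case analysis I would record two structural facts. First, $h\neq i$: otherwise the hypothesis $f'(y;i,k) > \phi^R(x)$ (valid since $k\in N\setminus R$) would contradict $f'(y;h,k) = \phi^R(x)$. Second, because $h,i\in R$ while $k,j^*\in N\setminus R$, the positive indices of $\tilde{y}-y$ lie in $R$ and the negative ones in $N\setminus R$; in particular $k\in\suppp(y-\tilde{y})$ and $\suppm(y-\tilde{y})=\{h,i\}$. This disjointness is exactly what removes the awkward ``collapse to a single exchange'' branch of the unconstrained proof (a $+\chi$ term can never cancel a $-\chi$ term here), while the only surviving degeneracy, $k=j^*$, causes no trouble because the arithmetic below is unaffected by it.

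I would then apply (M-EXC) to $y$, $\tilde{y}$, and $k\in\suppp(y-\tilde{y})$, obtaining some $j'\in\{h,i\}$ with $f(y)+f(\tilde{y})\ge f(y-\chi_k+\chi_{j'})+f(\tilde{y}+\chi_k-\chi_{j'})$, and split on $j'$. If $j'=h$, the right-hand side equals $f(y+\chi_h-\chi_k)+f(y+\chi_i-\chi_{j^*})$, so subtracting $f(y)+f(y+\chi_h-\chi_k)$ gives $f(\tilde{y})-f(y+\chi_h-\chi_k)\ge f'(y;i,j^*) > \phi^R(x)$ at once. If $j'=i$, the right-hand side equals $f(y+\chi_i-\chi_k)+f(y+\chi_h-\chi_{j^*})$; here I would rewrite the target difference in terms of the three slopes and use $f'(y;i,k) > \phi^R(x)$ (as $k\in N\setminus R$), $f'(y;h,j^*) \ge \phi^R(x)$ (as $+\chi_h-\chi_{j^*}$ is an admissible direction for $\phi^R$ and $\phi^R(y)=\phi^R(x)$), together with $f'(y;h,k)=\phi^R(x)$, to conclude $f(\tilde{y})-f(y+\chi_h-\chi_k) \ge f'(y;i,k)+f'(y;h,j^*)-\phi^R(x) > \phi^R(x)$.

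The main obstacle is the second case $j'=i$: there the exchange supplied by (M-EXC) does not land on the base point $y+\chi_h-\chi_k$ whose directional slope I must control, so the inequality has to be reassembled from the steepest-descent identity $f'(y;h,k)=\phi^R(x)$ together with the strict bound on $f'(y;i,\cdot)$ over $N\setminus R$. Everything else is bookkeeping, and since $j^*$ was an arbitrary element of $N\setminus R$, the desired conclusion $f'(y+\chi_h-\chi_k;\,i,j) > \phi^R(x)$ for every $j\in N\setminus R$ follows at once.
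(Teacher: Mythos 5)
Your proof is correct and follows essentially the same route as the paper: the same auxiliary vector $\tilde{y}=y+\chi_h-\chi_k+\chi_i-\chi_{j^*}$, the same application of (M-EXC) to $y$, $\tilde{y}$, and $k\in\suppp(y-\tilde{y})$, and the same use of $f'(y;i,\cdot)>\phi^R(x)$ on $N\setminus R$ together with $f'(y;h,j^*)\ge\phi^R(y)=f'(y;h,k)$. The only cosmetic difference is that you split explicitly on which element of $\{h,i\}$ the exchange axiom supplies, while the paper bounds the minimum over both possibilities in a single chain of inequalities; your explicit remarks that $h\ne i$ and that $R$ and $N\setminus R$ being disjoint eliminates the degenerate branches are implicit in the paper's version.
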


\begin{proof}
 We fix $j^* \in N\setminus R$ and 
denote $\tilde{y}=y+\chi_h - \chi_k + \chi_i - \chi_{j^*}$. 
 It holds that  $i\ne k$ and $j^*\ne h$
since $i,h \in R$  and $k,j^* \in N\setminus R$.
 It suffices to show that 
\begin{align}
  \label{eqn:lem:constM-slope_inc_2-1}
f(\tilde{y}) - f(y+\chi_h - \chi_k) > \phi^R(x).
\end{align}
 If $f(\tilde{y}) = + \infty$ then we are done; hence we assume $\tilde{y} \in \dom f$.
  The condition (M-EXC) applied to $y$, $\tilde{y}$, and $k \in \suppp(y- \tilde{y})$ implies that 
\begin{align*}
f(y) + f(\tilde{y}) 
& \ge \min\{f(y - \chi_k  + \chi_h) + f(y+ \chi_i - \chi_{j^*}), 
f(y - \chi_k  + \chi_{i}) + f(y+ \chi_h - \chi_{j^*})\}\\
& \ge f(y - \chi_k  + \chi_h) + \min\{f(y+ \chi_i - \chi_{j^*}), f(y- \chi_k + \chi_i)\}\\
& > f(y + \chi_h  - \chi_k) + f(y)+\phi^R(x),
\end{align*}
where the second inequality is by the assumption 
$f'(y;h,k) = \phi^R(x)=\phi^R(y)$,
and the last inequality is by 
$f'(y; i, j) > \phi^R(x)$ $(j \in N\setminus R)$.
 Hence, \eqref{eqn:lem:constM-slope_inc_2-1} follows.
\end{proof}

 By repeated application of Lemma \ref{lem:constM-slope_inc_2}, we obtain
the inequalities $f'(x'; i, j) > \phi^R(x)$ 
$(i \in R,\ j \in N\setminus R)$ 
for the vector $x'$ at the end of the algorithm {\sc ConstM-IncSlope}$(x)$,
provided that $x'(R) < k$.
  Hence, the desired inequality $\phi^R(x') > \phi^R(x)$ follows.

\subsection{Proof of Proposition \ref{prop:pM-mj-sdd-mono}}
\label{sec:proof:prop:pM-mj-sdd-mono}

 To prove Proposition \ref{prop:pM-mj-sdd-mono}, we use the following property of
polyhedral M-convex functions, stating that the value of a function $f$ can be bounded from below
by using a local information at a given vector $x \in \dom f$.

\begin{prop}
\label{prop:pM_lbound_sdd}
  For $x, y \in \dom f$, it holds that
$f(y)-f(x)\ge (1/2) \|y-x\|_1 \, \phi_\R(x)$.
\end{prop}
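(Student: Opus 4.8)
The plan is to prove the bound by an iterative \emph{exchange descent} that transports $y$ to $x$ along elementary directions $+\chi_i-\chi_j$, controlling the decrease of $f$ at each step via (M-EXC$[\R]$) evaluated against the fixed point $x$, and then summing the per-step estimates. Since $\phi_\R(x)\le 0$, the inequality asserts that $f$ cannot decrease faster than the steepest slope at $x$ times the half-distance to $x$, so a descent toward $x$ is the natural device.

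First I would prove a one-step estimate. Fix $z\in\dom f$ with $z\ne x$ and any $i\in\suppp(z-x)$; since the effective domain lies in a hyperplane $\{v\mid v(N)=\text{const}\}$, we have $(z-x)(N)=0$ and hence $\suppm(z-x)\ne\emptyset$. Applying (M-EXC$[\R]$) to the pair $(z,x)$ yields $j\in\suppm(z-x)$ and $\epsilon_0>0$ with
\[ f(z)+f(x)\ge f(z-\epsilon(\chi_i-\chi_j))+f(x+\epsilon(\chi_i-\chi_j))\qquad(\epsilon\in[0,\epsilon_0]). \]
Because $f$ is polyhedral convex there is $\epsilon_1>0$ such that $f(x+\epsilon(\chi_i-\chi_j))=f(x)+\epsilon f'_\R(x;i,j)$ for $\epsilon\in[0,\epsilon_1]$, and $f'_\R(x;i,j)\ge\phi_\R(x)$ by definition of $\phi_\R$. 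Substituting and using $\epsilon>0$ gives, for $\epsilon\in[0,\min\{\epsilon_0,\epsilon_1\}]$,
\[ f(z)-f(z-\epsilon(\chi_i-\chi_j))\ge\epsilon f'_\R(x;i,j)\ge\epsilon\,\phi_\R(x), \]
and the finiteness of the left-hand side of the (M-EXC$[\R]$) inequality forces $z-\epsilon(\chi_i-\chi_j)\in\dom f$.

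Then I would iterate this from $y_0=y$: at step $t$ choose $i\in\suppp(y_t-x)$, obtain $j$ as above, and set $y_{t+1}=y_t-\epsilon_t(\chi_i-\chi_j)$ with $\epsilon_t$ chosen maximal subject to $\epsilon_t\le\min\{y_t(i)-x(i),\,x(j)-y_t(j)\}$ (so $y_t$ never overshoots $x$ in coordinates $i,j$) together with the breakpoint bounds $\epsilon_0,\epsilon_1$. Each move pushes coordinates $i,j$ monotonically toward those of $x$, so $\suppp(y_t-x)$ and $\suppm(y_t-x)$ only shrink, while $\|y_t-x\|_1-\|y_{t+1}-x\|_1=2\epsilon_t$ and $f(y_t)-f(y_{t+1})\ge\epsilon_t\,\phi_\R(x)$ by the one-step estimate. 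The main obstacle is to guarantee that $x$ is reached after finitely many steps $y_0,\dots,y_m=x$; here I would invoke polyhedrality: the supports change at most $2n$ times, there are only finitely many pairs $(i,j)$, and along each direction the piecewise-linear maps $\epsilon\mapsto f(x+\epsilon(\chi_i-\chi_j))$ and $\epsilon\mapsto f(y_t-\epsilon(\chi_i-\chi_j))$ have finitely many breakpoints, so altogether only finitely many steps occur.

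Granting finiteness, telescoping the per-step bound gives
\[ f(y)-f(x)=\sum_{t=0}^{m-1}\bigl(f(y_t)-f(y_{t+1})\bigr)\ge\phi_\R(x)\sum_{t=0}^{m-1}\epsilon_t=(1/2)\|y-x\|_1\,\phi_\R(x), \]
using $\sum_{t}2\epsilon_t=\|y-x\|_1$, which is exactly the claimed inequality. I expect the only delicate point to be the finiteness of the descent; the two inequalities feeding the telescope are immediate consequences of (M-EXC$[\R]$) and of the linearity of $f$ near $x$ in each chosen direction.
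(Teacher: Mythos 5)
Your one-step estimate and the telescoping are sound: applying (M-EXC$[\R]$) to the pair $(z,x)$ and using the linearity of $f$ near $x$ does give $f(z)-f(z-\epsilon(\chi_i-\chi_j))\ge \epsilon\,\phi_\R(x)$ for all sufficiently small $\epsilon>0$, and \emph{if} the descent reaches $x$ in finitely many steps with $\sum_t 2\epsilon_t=\|y-x\|_1$, the claimed inequality follows. The genuine gap is exactly the point you flag as delicate: finite termination is not established by your sketch, and it is the entire difficulty. The axiom (M-EXC$[\R]$) only supplies \emph{some} $\epsilon_0>0$, with no quantitative lower bound, and the admissible step is further capped by the breakpoints of $\epsilon\mapsto f(y_t-\epsilon(\chi_i-\chi_j))$ --- a map that changes with $y_t$ at every iteration, so ``finitely many breakpoints per direction'' does not bound the number of steps. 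The only monotone quantity available is $\|y_t-x\|_1$, which decreases by $2\epsilon_t$; monotone decrease is compatible with infinitely many steps whose lengths sum to strictly less than $(1/2)\|y-x\|_1$ (Zeno behavior). Note also that you cannot extend the one-step bound from small $\epsilon$ to the full step length by convexity alone, since convexity of $\epsilon\mapsto f(y_t-\epsilon(\chi_i-\chi_j))$ yields an inequality in the wrong direction; so each step really is limited to an uncontrolled length. The support-reduction events account for at most $n$ steps, but all remaining steps are breakpoint-limited, and nothing in your argument prevents there being infinitely many of them (a cell of linearity of $f$ can be re-entered by a trajectory that is monotone only in $\ell_1$-distance to $x$).

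The paper avoids this entirely: it invokes the known decomposition/upper-bound theorem for polyhedral M-convex functions (Theorem 4.15 of Murota--Shioura), which asserts directly that $y-x=\sum_{i\ne j}\lambda_{ij}(\chi_i-\chi_j)$ with $\lambda_{ij}\ge 0$, $\sum_{i\ne j}\lambda_{ij}=(1/2)\|y-x\|_1$, and $f(y)-f(x)\ge\sum_{i\ne j}\lambda_{ij}f'_\R(x;i,j)$; the proposition is then immediate from $f'_\R(x;i,j)\ge\phi_\R(x)$. Your iterative construction is in effect an attempt to reprove that theorem, and the missing finiteness argument is its essential content; either cite it as the paper does, or supply a genuine termination proof.
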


\begin{proof}
 Since $f$ is polyhedral M-convex, 
there exist real numbers $\lambda_{ij} \ge 0\ (i,j \in N,\ i\ne j)$ 
such that
\begin{align*}
& \sum_{i,j \in N, i\ne j} \lambda_{ij}(\chi_i - \chi_j) = y-x,
\quad 
\sum_{i,j \in N,i\ne j} \lambda_{ij} = (1/2)\|y-x\|_1,
\\
& f(y) - f(x)
\ge 
\sum_{i,j \in N, i\ne j} \lambda_{ij} f'_\R(x; i,j)
\end{align*}
 (cf.~\cite[Theorem 4.15]{polyML}).
 We have $f'_\R(x; i,j) \ge \phi_\R(x)$ for distinct $i,j \in N$ by the definition of
 $\phi_\R(x)$. 
 Hence, the desired inequality $f(y)-f(x)\ge (1/2) \|y-x\|_1 \, \phi_\R(x)$
follows.
\end{proof}

  We first prove the inequality $\phi_\R(\hat{y}) \ge \phi_\R(y)$
in the statement (i), where $\hat{y} = y+ \lambda(\chi_i - \chi_j)$.
   Let $h, k \in N$ be distinct elements such that
$f'_\R(\hat{y}; h,k) = \phi_\R(\hat{y})$, and
${\mu} > 0$ be a real number with $\mu \le \lambda$ such that
\[
f(\hat{y} + \mu (\chi_h -\chi_{k})) - f(\hat{y}) = \mu \phi_\R(\hat{y}).
\]
 We denote $\tilde{y} = \hat{y} + {\mu} (\chi_h -\chi_{k})$.
 Then, we have
\begin{align}
  f(\tilde{y}) - f(y) = (f(\hat{y}) - f(y)) + (f(\tilde{y}) - f(\hat{y})) 
=  \lambda \phi_\R(y) + {\mu}\phi_\R(\hat{y}).
\label{eq:prop:pM-mj-sdd-mono:4}
\end{align}
 Since $(1/2)\|\tilde{y} - y \|_1 \le \lambda + \mu$ and $\phi_\R(y) < 0$,
Proposition \ref{prop:pM_lbound_sdd}
implies that
\[
f(\tilde{y}) - f(y) \ge (1/2) \|\tilde{y} - y \|_1 \phi_\R(y)
\ge (\lambda + \mu)\phi_\R(y).
\]
 It follows from this inequality and \eqref{eq:prop:pM-mj-sdd-mono:4} that
$\phi_\R(\hat{y}) \ge \phi_\R(y)$.

 We then prove the statement (ii).
  For every distinct $h,k \in N$, 
it holds that $f'_\R(\hat{y}; h,k) \ge \phi_\R(\hat{y}) \ge \phi_\R(y)$
by (i).
 If $f'_\R(\hat{y}; h,k) = \phi_\R(y)$ holds, then 
the two inequalities in $f'_\R(\hat{y}; h,k) \ge \phi_\R(\hat{y}) \ge \phi_\R(y)$ hold with equality,
and therefore $+\chi_h - \chi_k$ is a steepest descent direction at $\hat{y}$.
 Since $\phi_\R(\hat{y}) = \phi_\R(y)$,
the proof of (i) given above shows that 
$(1/2)\|\tilde{y} - y \|_1 \le \lambda + \mu$ holds with equality, from which
$k \ne i$ and $h \ne i$ follow.

\subsection{Proof of Theorem \ref{thm:pM-inc_min_slope}}
\label{sec:proof:thm:pM-inc_min_slope}

 We prove the inequality $\phi_\R(x')  > \phi_\R(x)$. 
 The proof outline is the  same as that for Theorem \ref{thm:M-inc_min_slope}.
  Hence, it suffices to show the following two lemmas that correspond to
Lemmas \ref{lem:M-slope_inc_1-2} and \ref{lem:M-slope_inc_2}.

\begin{lemma}
\label{lem:pM-slope_inc_1-2}
Let $y \in \dom f$ be vectors with $\phi_\R(y)=\phi_\R(x)$,
and $i,j \in N$ be distinct elements such that $f'_\R(y; i, j) > \phi_\R(x)$.
 For $k \in N \setminus\{i,j\}$ and $\lambda > 0$ with $\hat{y} \equiv y+ \lambda(\chi_i - \chi_k) \in \dom f$,
we have
$f'_\R(\hat{y};  i, j) \ge f'_\R(y; i, j)> \phi_\R(x)$.
\end{lemma}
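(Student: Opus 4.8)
The plan is to reduce the claim to the single inequality $f'_\R(\hat{y};i,j) \ge f'_\R(y;i,j)$, since the trailing strict inequality $f'_\R(y;i,j) > \phi_\R(x)$ is exactly the hypothesis; chaining the two then gives the statement. As in the integer analogue (Lemma~\ref{lem:M-slope_inc_1-2}), the idea is to build a vector $\tilde{y} = \hat{y} + \mu(\chi_i - \chi_j)$, apply the exchange axiom to the pair $(y,\tilde{y})$ with the index $j$ so that the exchange partner is forced to be $i$, and rearrange. The genuinely new feature compared with the $\Z^n$ argument is that (M-EXC$[\R]$) only supplies its inequality for exchange amounts in an interval $[0,\epsilon_0]$ of possibly tiny length, so the displacement $\mu$ defining $\tilde{y}$ and the exchange amount granted by the axiom must be kept as two \emph{independent} small scales.

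Concretely, I would first dispose of the trivial case $f'_\R(\hat{y};i,j) = +\infty$, where the desired inequality is immediate. Otherwise, using that $f$ is polyhedral convex and $\hat{y} \in \domR f$, I would fix $\mu > 0$ small enough that $f$ is affine along $[\hat{y}, \hat{y}+\mu(\chi_i-\chi_j)]$, so that $\tilde{y} = \hat{y}+\mu(\chi_i-\chi_j) \in \domR f$ and $f(\hat{y}+\beta(\chi_i-\chi_j)) = f(\hat{y}) + \beta f'_\R(\hat{y};i,j)$ for every $\beta \in [0,\mu]$. A direct computation gives $\tilde{y} - y = (\lambda+\mu)\chi_i - \lambda\chi_k - \mu\chi_j$, and since $i,j,k$ are distinct this yields $\suppp(y-\tilde{y}) = \{j,k\}$ and $\suppm(y-\tilde{y}) = \{i\}$.

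Next I would apply (M-EXC$[\R]$) to $y$ and $\tilde{y}$ with the index $j \in \suppp(y-\tilde{y})$; because $\suppm(y-\tilde{y}) = \{i\}$ is a singleton, the partner is forced to be $i$, producing some $\epsilon_0 > 0$ with
\[
f(y) + f(\tilde{y}) \ge f(y + \delta(\chi_i - \chi_j)) + f(\hat{y} + (\mu - \delta)(\chi_i - \chi_j))
\]
for all $\delta \in [0,\epsilon_0]$, where I have rewritten $\tilde{y} - \delta(\chi_i-\chi_j) = \hat{y}+(\mu-\delta)(\chi_i-\chi_j)$. Finally I would pick $\delta > 0$ below $\epsilon_0$, below $\mu$, and below the linearity threshold of $f$ at $y$ in direction $\chi_i-\chi_j$; for such $\delta$ every term is affine, namely $f(\tilde{y}) = f(\hat{y}) + \mu f'_\R(\hat{y};i,j)$, $f(y+\delta(\chi_i-\chi_j)) = f(y) + \delta f'_\R(y;i,j)$, and $f(\hat{y}+(\mu-\delta)(\chi_i-\chi_j)) = f(\hat{y}) + (\mu-\delta)f'_\R(\hat{y};i,j)$. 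Substituting, the terms $f(y)$, $f(\hat{y})$ and $\mu f'_\R(\hat{y};i,j)$ all cancel, leaving $\delta f'_\R(\hat{y};i,j) \ge \delta f'_\R(y;i,j)$, and dividing by $\delta$ completes the argument.

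I expect the main obstacle to be precisely this two-scale bookkeeping: one must verify that the exchange amount $\delta$ can be chosen independently of $\mu$ and small enough to lie simultaneously inside $[0,\epsilon_0]$ and inside the affine pieces at both $y$ and $\hat{y}$. The reason a small $\delta$ succeeds, rather than attempting $\delta=\mu$ (which would require the unavailable bound $\mu \le \epsilon_0$), is that the cancellation erases the $\mu$-dependence entirely, so only the infinitesimal behaviour in the $\chi_i-\chi_j$ direction survives.
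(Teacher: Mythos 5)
Your proposal is correct and follows essentially the same route as the paper's proof: the paper also sets $\tilde{y}=\hat{y}+\delta(\chi_i-\chi_j)$ on an affine piece at $\hat{y}$, applies (M-EXC$[\R]$) to $y$, $\tilde{y}$ and $j\in\suppp(y-\tilde{y})$ with the partner forced to be $i$ since $\suppm(y-\tilde{y})=\{i\}$, and then takes the exchange amount $\epsilon\le\delta$ as a second, independent small scale so that all terms lie on affine pieces and the $\delta$-dependence cancels. The two-scale bookkeeping you flag as the main obstacle is handled in the paper exactly as you describe.
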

 
\begin{proof}
 It suffices to show the inequality $f'_\R(\hat{y};  i, j) \ge f'_\R(y; i, j)$
since $f'_\R(y; i, j) > \phi_\R(x)$.
 Let $\tilde{y} =\hat{y} + \delta(\chi_i - \chi_j)$ with a sufficiently
small $\delta > 0$ such that
$f(\tilde{y}) - f(\hat{y}) = \delta f'_\R(\hat{y};i,j)$.
 By the choice of $\delta$, we have
\begin{align}
\label{eqn:lem:pM-slope_inc_1:1}
f(\hat{y} + \mu(\chi_i - \chi_j)) - f(\hat{y} + \mu'(\chi_i - \chi_j)) = (\mu-\mu') f'_\R(\hat{y};i,j)
\qquad (0 \le \mu' \le \mu \le \delta).
\end{align}

 We have $\suppp(y - \tilde{y}) = \{j,k\}$
and $\suppm(y - \tilde{y}) = \{i\}$.
 Hence, (M-EXC$[\R]$) applied to $y$, $\tilde{y}$, and $j \in \suppp(y- \tilde{y})$ implies that
there exists a sufficiently small $\epsilon > 0$ with $\epsilon \le \delta$ such that
\begin{align}
\label{eqn:lem:pM-slope_inc_1:2}
f(y) + f(\tilde{y}) 
& \ge  f(y - \epsilon(\chi_j - \chi_i)) + f(\tilde{y} + \epsilon(\chi_j - \chi_i)).
\end{align}
 Since $\epsilon$ is sufficiently small, we have
\begin{align}
\label{eqn:lem:pM-slope_inc_1:3}
f(y - \epsilon(\chi_j - \chi_i)) - f(y)  
=f(y + \epsilon(\chi_i - \chi_j)) - f(y)  
= \epsilon f'_\R({y};i,j).
 \end{align}
 By \eqref{eqn:lem:pM-slope_inc_1:1}, \eqref{eqn:lem:pM-slope_inc_1:2},
and \eqref{eqn:lem:pM-slope_inc_1:3}, we have
\begin{align*}
\epsilon f'_\R(\hat{y};i,j)
& = f(\hat{y} + \delta(\chi_i - \chi_j)) - f(\hat{y} + (\delta-\epsilon)(\chi_i - \chi_j))\\
& = f(\tilde{y}) - f(\tilde{y} + \epsilon(\chi_j - \chi_i))\\
& \ge f(y - \epsilon(\chi_j - \chi_i)) - f(y) =\epsilon f'_\R({y};i,j).
\end{align*}
Hence, $f'_\R(\hat{y};  i, j) \ge f'_\R(y; i, j)$ follows.
\end{proof}

\begin{lemma}
\label{lem:pM-slope_inc_2}
Let $y \in \dom f$ be vectors with $\phi_\R(y)=\phi_\R(x)$,
and $i \in N$ be an element satisfying
$f'_\R(y; i, j) > \phi_\R(x)$ for every $j \in N\setminus\{i\}$.
Also, let $h, k \in N$ be distinct elements such that
$f'_\R(y;h,k) = \phi_\R(x)$.
 Then, for every $\lambda \in \R$ with $0< \lambda \le \bar{c}_\R(x;i,j)$, 
 the vector $\hat{y}\equiv y+\lambda(\chi_h - \chi_k)$ satisfies 
$f'_\R(\hat{y}; i, j) > \phi_\R(x)$ for every $j \in N\setminus\{i\}$.
\end{lemma}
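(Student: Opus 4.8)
The plan is to follow the template of Lemma~\ref{lem:M-slope_inc_2}, fixing an arbitrary $j^* \in N \setminus \{i\}$ and establishing $f'_\R(\hat{y}; i, j^*) > \phi_\R(x)$ for $\hat{y} = y + \lambda(\chi_h - \chi_k)$. First I would make two preliminary reductions. By the choice of $\lambda$ we have $f(\hat{y}) = f(y) + \lambda\phi_\R(x)$, so Proposition~\ref{prop:pM-mj-sdd-mono}(i) gives $\phi_\R(\hat{y}) \ge \phi_\R(y) = \phi_\R(x)$; thus if $\phi_\R(\hat{y}) > \phi_\R(x)$ then $f'_\R(\hat{y}; i, j^*) \ge \phi_\R(\hat{y}) > \phi_\R(x)$ and there is nothing to prove, so I may assume $\phi_\R(\hat{y}) = \phi_\R(x)$. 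Also, applying the hypothesis $f'_\R(y; i, j) > \phi_\R(x)$ with $j = k$ and recalling $f'_\R(y; h, k) = \phi_\R(x)$ shows $i \ne h$.

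I would then split into the same two cases as the integer proof. In the degenerate case $i = k$ or $j^* = h$, Proposition~\ref{prop:pM-mj-sdd-mono}(ii) applied to the steepest move $y \mapsto \hat{y}$ guarantees that every direction $+\chi_a - \chi_b$ attaining $\phi_\R(\hat{y}) = \phi_\R(x)$ at $\hat{y}$ satisfies $b \ne h$ and $a \ne k$. Taking $(a, b) = (i, j^*)$, the equality $f'_\R(\hat{y}; i, j^*) = \phi_\R(x)$ would force both $i \ne k$ and $j^* \ne h$; hence under $i = k$ or $j^* = h$ the value $f'_\R(\hat{y}; i, j^*)$ exceeds $\phi_\R(\hat{y}) = \phi_\R(x)$ strictly, as required, so this case closes with no further work.

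The substantive case is $i \ne k$ together with $j^* \ne h$, where $i$ is now distinct from each of $h, k, j^*$. Here I would take $\delta > 0$ small, put $\tilde{y} = \hat{y} + \delta(\chi_i - \chi_{j^*})$ so that $f(\tilde{y}) = f(\hat{y}) + \delta f'_\R(\hat{y}; i, j^*)$, and note $(1/2)\|\tilde{y} - y\|_1 = \lambda + \delta$ and $i \in \suppp(\tilde{y} - y)$. The crucial twist, differing from the integer argument, is to apply (M-EXC$[\R]$) to the ordered pair $\tilde{y}, y$ with the special index $i$ as the source: this yields some $j'' \in \suppm(\tilde{y} - y) \subseteq \{k, j^*\}$, hence $j'' \ne i$, and a small $\epsilon > 0$ with $f(\tilde{y}) + f(y) \ge f(\tilde{y} - \epsilon(\chi_i - \chi_{j''})) + f(y + \epsilon(\chi_i - \chi_{j''}))$. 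The second term equals $f(y) + \epsilon f'_\R(y; i, j'') > f(y) + \epsilon\phi_\R(x)$ strictly, by the hypothesis and $j'' \ne i$; the first term is at least $f(y) + (\lambda + \delta - \epsilon)\phi_\R(x)$ by Proposition~\ref{prop:pM_lbound_sdd}, because displacing $\tilde{y}$ by $-\epsilon(\chi_i - \chi_{j''})$ moves it toward $y$ in the two coordinates $i \in \suppp$ and $j'' \in \suppm$, shrinking the $\ell_1$-distance from $y$ by exactly $2\epsilon$. Adding the two estimates yields $f(\tilde{y}) > f(y) + (\lambda + \delta)\phi_\R(x) = f(\hat{y}) + \delta\phi_\R(x)$, so that $\delta f'_\R(\hat{y}; i, j^*) > \delta\phi_\R(x)$ and the lemma follows.

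The hard part is the real-domain bookkeeping rather than any single inequality: I must pick $\delta$ small enough for the one-direction linearization $f(\tilde{y}) - f(\hat{y}) = \delta f'_\R(\hat{y}; i, j^*)$ to be exact, and then pick the exchange parameter $\epsilon$ (no larger than $\delta$) small enough that no coordinate of $\tilde{y} - y$ changes sign, so that the $2\epsilon$ reduction of the $\ell_1$-distance is exact and the linearization at $y$ remains valid. The conceptual device that makes strictness survive the passage to directional derivatives is running the exchange from $\tilde{y}$ back toward $y$ with the distinguished index $i$ as source: this places the strict slope inequality $f'_\R(y; i, \cdot) > \phi_\R(x)$ on a term based at $y$, while the remaining term needs only the non-strict global bound of Proposition~\ref{prop:pM_lbound_sdd}, whose slack is precisely the $2\epsilon$ saved in the displacement.
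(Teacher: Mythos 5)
Your proposal is correct and follows essentially the same route as the paper's proof: the same reduction via Proposition~\ref{prop:pM-mj-sdd-mono} to the case $i\ne k$, $j^*\ne h$, the same construction of $\tilde{y}=\hat{y}+\delta(\chi_i-\chi_{j^*})$, the same application of (M-EXC$[\R]$) to the pair $\tilde{y},y$ with source $i\in\suppp(\tilde{y}-y)$, and the same combination of the strict slope hypothesis at $y$ with the lower bound of Proposition~\ref{prop:pM_lbound_sdd}. The only cosmetic differences are that you keep the existential exchange index $j''$ where the paper writes out the minimum over both candidates in $\{j^*,k\}$, and you phrase the reduction as an explicit case split rather than a ``we may assume'' step.
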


\begin{proof}
 We fix $j^* \in N\setminus\{i\}$ and prove the inequality $f'_\R(\hat{y};  i, j^*) > \phi_\R(x)$.
 We may assume that $f'_\R(\hat{y};i,{j^*}) = \phi_\R(\hat{y})$
since otherwise $f'_\R(\hat{y};i,{j^*}) > \phi_\R(\hat{y}) \ge \phi_\R(y) = \phi_\R(x)$
by Proposition \ref{prop:pM-mj-sdd-mono} (i).
 This assumption implies that $i\ne k$ and ${j^*}\ne h$
by Proposition \ref{prop:pM-mj-sdd-mono} (ii).

 Since $0< \lambda \le \bar{c}_\R(x;i,j)$, we have
\begin{align}
f(\hat{y}) - f({y})
= \lambda f'_\R(y;h,k) = \lambda \phi_\R(x).
\label{eqn:lem:pM-slope_inc_2:1}
\end{align}
 Let $\tilde{y} =\hat{y} + \delta(\chi_i - \chi_{j^*})$ with a sufficiently
small $\delta > 0$ such that
$f(\tilde{y}) - f(\hat{y}) = \delta f'_\R(\hat{y};i,{j^*})$.


  Since $\suppp(\tilde{y}-y)=\{i,h\}$ and $\suppm(\tilde{y}-y)=\{{j^*},k\}$,
 (M-EXC$[\R]$) applied to $\tilde{y}$, $y$, and $i\in \suppp(\tilde{y}-y)$ implies that 
there exists a sufficiently small $\epsilon > 0$ with $\epsilon < \min(\lambda, \delta)$ such that
\begin{align}
& f(\tilde{y}) + f(y)  
\notag\\
& \ge \min\{
f(\tilde{y}- \epsilon(\chi_i  - \chi_{j^*})) + f(y+\epsilon(\chi_i  - \chi_{j^*})),  
f(\tilde{y}- \epsilon(\chi_i  - \chi_k)) + f(y+\epsilon(\chi_i  - \chi_k))  
\}
\notag\\
& = \min\{
f(\tilde{y}- \epsilon(\chi_i  - \chi_{j^*})) + \epsilon f'_\R(y;i,{j^*}),  
f(\tilde{y}- \epsilon(\chi_i  - \chi_k)) + \epsilon f'_\R(y;i,k)  
\}  + f(y)  
\notag\\
& > 
\min\{
f(\tilde{y}- \epsilon(\chi_i  - \chi_{j^*})), 
f(\tilde{y}- \epsilon(\chi_i  - \chi_k)) 
\}
 + f(y)  + \epsilon \phi_\R(x),
\label{eqn:lem:pM-slope_inc_2:4}
\end{align}
 where the equality holds since $\epsilon$ is a sufficiently small positive number,
and the strict inequality is by $f'_\R(y; i, j) > \phi_\R(x)$  $(j \in N\setminus\{i\})$.
  By Proposition \ref{prop:pM_lbound_sdd}
and the equation 
\[
\|(\tilde{y}- \epsilon(\chi_i  - \chi_{j^*})) - {y}\|_1
= 
\|(\tilde{y}- \epsilon(\chi_i  - \chi_{k})) - {y}\|_1
= 2(\lambda + \delta - \epsilon),
\]
we have
\begin{align*}
 \min\{
f(\tilde{y}- \epsilon(\chi_i  - \chi_{j^*})), 
f(\tilde{y}- \epsilon(\chi_i  - \chi_k)) 
\} - f(y)
&  \ge
(\lambda + \delta - \epsilon) \phi_\R(y) 
 = (\lambda + \delta - \epsilon) \phi_\R(x),
\end{align*}
which, combined with \eqref{eqn:lem:pM-slope_inc_2:4}, 
implies 
$f(\tilde{y}) -f(y) >  (\lambda + \delta) \phi_\R(x)$.
  It follows from this inequality and \eqref{eqn:lem:pM-slope_inc_2:1} that
\begin{align*}
\delta f'_\R(\hat{y};i,{j^*})
 & =  f(\tilde{y}) - f(\hat{y})
  > (\lambda + \delta) \phi_\R(x) - \lambda \phi_\R(x) = 
 \delta \phi_\R(x).
\end{align*}
 Hence, the inequality $f'_\R(\hat{y};i,{j^*}) > \phi_\R(x)$ follows.
\end{proof}

\section*{Acknowledgment}
This work was partially supported by JST ERATO Grant Number JPMJER2301,
JST FOREST Grant Number JPMJFR232L, and
JSPS KAKENHI Grant Numbers JP22K17853, 23K10995, and JP24K21315.

\end{document}